\documentclass[preprint,12pt]{amsart}

\usepackage{amssymb}
\usepackage{amsmath}
\usepackage{amsthm}
\usepackage[foot]{amsaddr}

\usepackage{mathrsfs}
\usepackage{calrsfs} 
\usepackage{tikz}
\usetikzlibrary{calc}
\parskip\medskipamount
\newcommand{\K}{\mathbb{K}}

\renewcommand{\P}{\mathcal{P}}
\renewcommand{\L}{\mathcal{L}}
\newcommand{\V}{\mathcal{V}}

\newcommand{\I}{\mathtt{I}}
\newcommand{\C}{\mathtt{C}}

\newcommand{\Oct}{\mathscr{O}}
\newcommand{\MS}{\mathscr{M}}

\newcommand{\PO}{\P^\Oct}
\newcommand{\LO}{\L^\Oct}

\newcommand{\IO}{\I^\Oct}

\newcommand{\PMS}{\P^\MS}
\newcommand{\LMS}{\L^\MS}
\newcommand{\PiMS}{\mathcal{V}^\MS}
\newcommand{\SMS}{\mathcal{S}^\MS}

\newcommand{\CMS}{\C^\MS}

\newcommand{\OctFull}{\Oct = (\PO,\LO,\IO)}
\newcommand{\MSFull}{\MS = (\PMS,\LMS,\PiMS,\SMS,\CMS)}

\newcommand{\polarity}{\theta}

\newtheorem{theorem}{Theorem}[section]
\newtheorem{thmx}{Theorem}

\newtheorem{corollary}[theorem]{Corollary}
\newtheorem{lemma}[theorem]{Lemma}

\theoremstyle{remark}
\newtheorem{remark}{Remark}

\theoremstyle{definition}
\newtheorem{definition}[theorem]{Definition}

\newcommand{\col}{\perp}
\newcommand{\sympl}{\perp\hspace{-0.4em}\perp}
\renewcommand{\special}{\bowtie}
\newcommand{\opp}{\equiv}
\newcommand{\dual}{symmetric}
\newcommand{\Dual}{Symmetric}
\newcommand{\pperp}{\sympl}

\newcommand{\sympThrough}[2]{#1\diamond#2}
\newcommand{\specPoint}[2]{[#1,#2]}

\begin{document}

\title{The natural embedding of the Ree-Tits Octagon}

\author{Sebastian Petit}
\email{sebastian.petit@pg.canterbury.ac.nz}
\address{
	School of Mathematics and Statistics,
	University of Canterbury,
	Science road, 
	8041, Christchurch, 
	Canterbury, New Zealand}

\author{Hendrik Van Maldeghem}
\address{
	Department of Mathematics, Computer Science \& Statisctics, 
	Ghent University,
	Krijgslaan 299, S9,
	9000, Gent,
	Belgium}
	
\begin{abstract}
	In this paper, we study and characterise the natural embedding of the Ree-Tits octagons. 
	\\
	
	\noindent\textbf{Keywords:} Ree-Tits Octagon, Metasymplectic space, generalised octagon, generalised polygon
	\\
	
	\noindent\textbf{MSC[2020]}: 51E12, 51E24
\end{abstract}

\maketitle


\section{Introduction}	

In his seminal paper \cite{Tits:59}, Jacques Tits introduced geometric structures called \emph{generalised polygons}, and more specific \emph{generalised hexagons}, as the natural permutation representations for Dickson's simple groups (Chevalley groups of type $\mathsf{G_2}$), and the new simple groups of type $^3\mathsf{D_4}$ that he discovered in the same paper. These geometries were the predecessors of what later would be called \emph{buildings}, and, more precisely, their incidence graphs are, viewed as complexes, \emph{spherical buildings of rank $2$}. Generalised triangles and generalised quadrangles were known before as projective planes and polar geometries (called \emph{polar spaces} nowadays) of rank 2. They are related to classical groups. Apart from the exceptional groups studied and discovered in \cite{Tits:59}, there is one other class of simple groups whose members act naturally on generalised polygons and that is the class of Ree groups in characteristic $2$, which act on generalised octagons, called Ree-Tits octagons \cite{Tits:60}. 

In order to better understand these geometries, and hence also the corresponding classical and exceptional groups, one tries to find representations of them inside other, more accessible (higher rank) geometries. The natural homes of the generalised quadrangles that are related to classical groups are the projective spaces since these geometries are defined using forms on vector spaces. The generalised hexagons defined and discovered in \cite{Tits:59} are related to a triality map and hence can be represented in certain polar spaces, which, on their turn, naturally sit in projective spaces again. Finally, the Ree-Tits octagons, which are the most elusive examples of generalised polygons, appear as fix structure of a polarity in \emph{metasymplectic spaces}. The latter are geometries attached to buildings of type $\mathsf{F_4}$. They were studied by Freudenthal in the real case, and later by Tits \cite{Tits:74} in the general case. Cohen \cite{Cohen} formulated an alternative axiom system that is still used today (but we will use Tits' definition 10.13 of \cite{Tits:74}, see Definition~\ref{def:MSS}). 

The inclusion of the Ree-Tits octagons in metasymplectic spaces was used to provide an abstract characterisation of the former in \cite{Mal:98} in case the underlying fields is perfect, and to work out the combinatorics of root subgroups in groups of type $^2\mathsf{F_4}$ in \cite{Sar:88}.  

Metasymplectic spaces are in many respects very interesting geometries, as they come in many flavours, due to the existence of the appropriate forms of types $\mathsf{E_6,E_7}$ and $\mathsf{E_8}$ (see \cite{Tits:66}).  They contain representations of important classes of generalised quadrangles, as recently shown in \cite{Lam-Mal:26}, and also of generalised hexagons (see \cite{G2inF4}). Moreover, as pointed out above, also the Ree-Tits octagons have a representation, or embedding, in metasymplectic spaces.  That embedding was recently characterised \cite{Rijpert2025} as arising from a subcomplex of a building of type $\mathsf{F_4}$ possessing certain properties (which we recall in Section~\ref{Sec:ReeTits} below). However, that characterisation is not entirely satisfying in that the conditions are rather technical, self-dual and hence in a way too strong---it sort of presupposes a polarity, although the actual construction of the polarity is highly non-trivial.  In the present paper, we use this characterisation to prove an ostensibly stringer characterisation whose assumptions are much less technical, more natural, and are formulated in the language of geometries instead of simplical complexes. More exactly, we will work with three geometric conditions and variations, which we briefly review and motivate in the next paragraph.

We start off with a \emph{fully embedded} arbitrary generalised octagon $\Oct$ in a metasymplectic space $\MS$. This means that the (different) points of $\Oct$ are (different) points of $\MS$ and each line of $\Oct$, viewed as sets of points, coincides with a line of $\MS$. Our first basic assumption is that the lines of $\Oct$ through any point are contained in a symplecton (called a \emph{hyperline} in \cite{Tits:83}) through that point. This is the equivalent of what people call a \emph{polarised embedding} (of a polar space or a dual polar space): the set of lines through each point is contained in an object that has the type of the images of a point under the polarity (or triality). The second basic assumption is a kind of density assumption, in order to keep the metasymplectic space ``small'' enough compared to the octagon: it says that every symplecton must intersect the octagon non-trivially. A variation is, more specifically, that each symplecton intersects $\Oct$ in either a point, a line (of $\Oct$), or a all lines of $\Oct$ through a point. If we require that variation, then no additional---third--- assumption is needed. If we only require a non-trivial intersection, a third condition must ensure that distances measured in $\Oct$ compare in a reasonable way to those measured in $\MS$: either we ask that points at distance~$2$ in $\Oct$ are symplectic in $\MS$ (meaning, contained in a unique symplecton), or there exists a pair of points in $\Oct$ that is at maximal distance in $\MS$. We believe that these conditions are much easier to check ``in real life'' than the ones from \cite{Rijpert2025}.

Referring forward to notation, we show the following three variation of the same result.

\begin{thmx}\label{Main}
	Let \(\Oct\) 
	be a generalised octagon fully embedded in a metasymplectic space \(\MS\). 
	Then \(\Oct\) is a Ree-Tits octagon naturally associated to a polarity of \(\MS\) if, and only if, 
	\begin{itemize}
		\item[\((S)\)] two points of \(\Oct\) are at distance \(2\) if, and only if, they are symplectic,
		\item[\((P)\)] through every point \(x\) of \(\Oct\) there exists a symp \(x^\polarity\) of \(\MS\) such that all points collinear to \(x\) in \(\Oct\) are contained in \(x^\polarity\),
		\item[\((D)\)] every symp of \(\MS\) contains at least one point of \(\Oct\).
	\end{itemize}
\end{thmx}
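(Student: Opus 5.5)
The ``only if'' direction I would check directly from the standard description of the Ree-Tits octagon as the fixed structure of a polarity $\polarity$ of $\MS$. Its points are the absolute points $x$, i.e.\ those with $x\in x^\polarity$. Its lines are the absolute lines $L$, with $L\subseteq L^\polarity$ a plane. Property~$(P)$ is then immediate: every octagon line through $x$ lies in $x^\polarity$. For $(D)$ I would use that every symp $\xi$ has $\xi^\polarity$ a point, and that the absolute elements in the residue give an octagon point in $\xi$; this is a standard fact about these polarities. For $(S)$ I would compute distances between absolute points through the known correspondence between distances in $\Oct$ and relations in $\MS$. Points at distance $2$ in $\Oct$ are symplectic, points at distance $4$ are special, and points at distance $6$ or $8$ are opposite. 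Hence ``symplectic'' happens exactly at distance~$2$.

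For the ``if'' direction the plan is to verify the hypotheses of the characterisation from \cite{Rijpert2025} recalled in Section~\ref{Sec:ReeTits}. We may assume that, in its simplicial form, this asks for a subcomplex of the $\mathsf{F_4}$ building on which there is a type-reversing correspondence with prescribed relations between adjacent elements. First I would show that the symp $x^\polarity$ in $(P)$ is unique. Two octagon points collinear with $x$ and at distance $2$ from each other would be symplectic by $(S)$. Using this, I would show that any symp containing all the $\Oct$-lines through $x$ must contain $x$ and their span, and that this forces uniqueness. This gives a map $x\mapsto x^\polarity$ from points of $\Oct$ to symps.

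Next, for an $\Oct$-line $L$, I would show that $x^\polarity\cap y^\polarity$ is a plane $L^\polarity$ containing $L$ for all $x,y\in L$. The idea is that neighbouring octagon points $x,y$ have $x^\polarity$ and $y^\polarity$ meeting in a plane. I would derive this from $(S)$ by looking at the points at $\Oct$-distance $2$ from both $x$ and $y$. Then I would analyse, case by case, how an arbitrary symp $\xi$ meets $\Oct$, using $(D)$ and $(S)$. The claim is that $\xi\cap\Oct$ is a single point, a line, or all $\Oct$-lines through a point $z$, and that in the last case $\xi=z^\polarity$. Two points of $\Oct$ inside $\xi$ are collinear or symplectic in $\MS$. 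By $(S)$ they are therefore at $\Oct$-distance at most $2$ when they are symplectic, and the claimed shape follows. This recovers the stronger intersection condition of the variant theorem, and the remaining combinatorial axioms of \cite{Rijpert2025} would be checked from there.

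I expect the main obstacle to be the step from $(D)$, which only guarantees that each symp meets $\Oct$ in at least one point, to the structural statement that each symp determines a well-defined ``absolute'' object. Such an object is needed to define $\polarity$ on all symps and to show that the correspondence is involutive and compatible with the relations required in \cite{Rijpert2025}. My first attempt would be to take a symp $\xi$ and an octagon point $z\in\xi$. I would then use the fact that, in a metasymplectic space, a point $p$ and a symp $\xi$ with $p\notin\xi$ are related in one of a few standard ways: $p^\perp\cap\xi$ is empty, a point, or a line. Applying this to octagon points near $z$ should pin down the intersection $\xi\cap\Oct$. If it does not, I would fall back on a counting or opposition argument to exclude stray configurations, using that $\Oct$ has no short circuits.
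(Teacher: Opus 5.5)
Your ``if'' direction has a genuine gap. You establish uniqueness of $x^\polarity$, the planes $L^\polarity$, and the $(D^\star)$-shape of $\xi\cap\Oct$, and then say the remaining conditions of \cite{Rijpert2025} ``would be checked from there''. That is where all the work lies. As used in Lemma~\ref{Cor:KeyCor}, one needs three things:
(1) $x\mapsto x^\polarity$ maps the points of each $\Oct$-line $L$ bijectively onto the symps through $L^\polarity$;
(2) the $\Oct$-lines through $x$ form an ovoid of the $\{x,x^\polarity\}$-GQ, and the planes $L^\polarity$ form a dual ovoid;
(3) opposite points of $\Oct$ have opposite images.
None of these follows from the shape of the intersections $\xi\cap\Oct$. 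All three rely on points of $\Oct$ at distance $4$ being opposite in $\MS$, which you never establish. Your $(D^\star)$ step also silently needs that points at distance $3$ or $4$ are not collinear in $\MS$.

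``Distance $4\Rightarrow$ opposite'' is the real content of Section~\ref{Sec:PseudoIsoEmbedding}. A first opposite pair is produced by analysing how $\sympThrough{x_1}{x_7}$ and $\sympThrough{x_3}{x_5}$ can be related for an ordinary octagon $x_0,\dots,x_7$ of $\Oct$. It is then propagated to all pairs at distance $4$ by Lemma~\ref{Lem:inductionOnEvenPoly}. Even with that in hand, (3) still needs the argument of Lemma~\ref{Lem:OppToOpp}, where the special case is excluded via Lemma~\ref{Lem:specialplanes}. Conditions (1) and (2) need targeted applications of $(D)$, each followed by a case analysis using ``distance $4\Rightarrow$ opposite''. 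The symps to which $(D)$ is applied are: a symp through a line of a plane $\pi\ni x$ of $x^\polarity$ that avoids $x$; a symp meeting $x^\polarity$ in a single point of a given line through $x$; and symps through a plane meeting $L^\polarity$ in one point.

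The obstacle you single out is also misdirected. The point of invoking \cite{Rijpert2025} is that you do \emph{not} have to extend $\polarity$ to all symps or prove it involutive. The theorem produces the polarity from the subcomplex of pairs $\{x,x^\polarity\}$, $\{L,L^\polarity\}$, provided (1)--(3) hold. A counting fallback is in any case unavailable over infinite fields.

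In the ``only if'' direction, your argument for $(D)$ only covers absolute symps, where $\xi^\polarity\in\xi$. For a non-absolute symp, nothing in a residue produces an absolute point in $\xi$. The paper instead derives density from the symmetric and ovoidal properties of the natural embedding, in the first half of the proof of Theorem~\ref{Thm:Dual+OvoidalEmbedding}. Finally, the distance dictionary you quote is wrong. With the collinearity-graph distance used in $(S)$ (diameter $4$), distances $1,2,3,4$ correspond to collinear, symplectic, special, opposite; there are no distances $6$ or $8$. Your conclusion $(S)$ survives, but the facts taken from \cite{VanMaldeghem1998} should be stated correctly.
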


\begin{thmx}\label{Var1}
	Let \(\Oct\) be a generalised octagon fully embedded in a metasymplectic space \(\MS\).
	Then \(\Oct\) is a Ree-Tits octagon naturally associated to a polarity of \(\MS\) if, and only if, 
	\begin{itemize}
		\item[\((P)\)] through every point \(x\) of \(\Oct\) there exists a symp \(x^\polarity\) of \(\MS\) such that all points collinear to \(x\) in \(\Oct\) are contained in \(x^\polarity\),
		\item[\((D^\star)\)] every symp of \(\MS\) intersects \(\Oct\) in either a point, a line of $\Oct$, or all points at distance~$1$ from a point of \(\Oct\).
	\end{itemize}
\end{thmx}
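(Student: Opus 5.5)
The strategy is to reduce Theorem~\ref{Var1} to Theorem~\ref{Main}. We would show that \((P)\) and \((D^\star)\) together imply \((S)\); condition \((D)\) is immediate from \((D^\star)\). The ``only if'' direction needs a separate check. For the Ree-Tits octagon sitting as the fixed structure of a polarity \(\polarity\), a symp \(\xi\) meets \(\Oct\) according to the relative position of \(\xi\) and \(\xi^\polarity\). If \(\xi^\polarity\in\xi\), then \(\xi\) meets \(\Oct\) in the points at distance at most \(1\) from \(\xi^\polarity\). If \(\xi^\polarity\) is collinear to \(\xi\) along an \(\Oct\)-line, the intersection is that line; in the remaining cases it is a single point, namely the unique \(\Oct\)-point of \(\xi\). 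We would extract this from the description of the polarity in Section~\ref{Sec:ReeTits}, in the same way the necessity of \((D)\) is handled in Theorem~\ref{Main}.

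For sufficiency, first note that by \((P)\) any two points at \(\Oct\)-distance \(2\) lie in \(x^\polarity\) for their common neighbour \(x\). They are therefore either collinear in \(\MS\) or symplectic. Collinearity in \(\MS\) is impossible: two \(\Oct\)-lines through \(x\) spanning an \(\MS\)-plane would force \(\Oct\)-points at distance \(2\) on a common \(\MS\)-line. We exclude this using full embeddedness together with the absence of triangles in \(\Oct\). So points at distance \(2\) are symplectic, and one direction of \((S)\) holds.

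For the converse, suppose \(y,z\in\Oct\) are symplectic in \(\MS\) with symp \(\xi\) but are not at \(\Oct\)-distance \(2\). Since \(y\) and \(z\) are not \(\MS\)-collinear (full embedding and distinct lines), they are not \(\Oct\)-collinear either, so they are \(\Oct\)-points of \(\xi\) at \(\Oct\)-distance at least \(3\). Now apply \((D^\star)\) to \(\xi\). The intersection \(\xi\cap\Oct\) cannot be a point or a line, since it contains \(y\) and \(z\) at distance at least \(3\). If it is the set of points at distance at most \(1\) from some \(x\in\Oct\), then \(y\) and \(z\) are at distance at most \(2\), a contradiction. Hence \((S)\) holds.

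The remaining point to verify is the fine print. In the third case of \((D^\star)\), the phrase ``all points at distance~\(1\) from a point'' should be read as including that point itself, so that any two points of the intersection are at distance at most \(2\). Even on the other reading (the centre excluded), two distinct neighbours of \(x\) are still at distance exactly \(2\), so the argument survives. With \((S)\), \((P)\) and \((D)\) in hand, Theorem~\ref{Main} yields the result. The main obstacle is the ``only if'' classification of symp intersections, which relies on the detailed structure of the polarity.
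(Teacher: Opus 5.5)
Your overall reduction matches the paper's: under \((P)\), show that \((D^\star)\) is equivalent to \((S)\) together with \((D)\), then invoke Theorem~\ref{Main}. Your argument that symplectic points of \(\Oct\) are at distance \(2\) is also the paper's. But the step ``points \(y,z\) at distance \(2\) with common neighbour \(x\) cannot be collinear in \(\MS\)'' does not follow from full embeddedness and the absence of triangles. Full embeddedness only says that lines of \(\Oct\) are lines of \(\MS\) all of whose points lie in \(\Oct\). It does not say that the \(\MS\)-line joining two \(\MS\)-collinear points of \(\Oct\) is a line of \(\Oct\), or even that it contains a third point of \(\Oct\). So \(y\col z\) creates no triangle in \(\Oct\), and nothing in \((P)\) stops two \(\Oct\)-lines through \(x\) from lying in a common plane of \(x^\polarity\). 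This is exactly the non-trivial part of pseudo-isometry, which the paper has to argue separately in both variations.

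You must use \((D^\star)\) a second time here. Suppose \(y\col z\); then \(x,y,z\) span a plane. By (MS2) the symps through the line \(yz\) are the points of a projective plane, and those through that plane form only a line of it. Hence there is a symp \(\Sigma\supseteq yz\) with \(x\notin\Sigma\). Now \(\Sigma\cap\Oct\) contains \(y,z\) at distance \(2\), so by \((D^\star)\) it is the neighbourhood of a point, which must be \(x\). Then \(\Sigma\) contains at least two points of the \(\Oct\)-line \(xy\), hence contains \(x\), a contradiction.

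The necessity direction is also not actually proved. The classification of \(\xi\cap\Oct\) by the position of \(\xi^\polarity\) is only asserted, and Section~\ref{Sec:ReeTits} contains no description of the polarity from which it could be extracted. Your ``single point'' case also silently presupposes density, which the paper obtains via Theorem~\ref{Thm:Dual+OvoidalEmbedding}, not from the polarity. No structure of the polarity is needed:
\begin{itemize}
\item By Theorem~\ref{Main}, a Ree-Tits octagon satisfies \((S)\), \((P)\) and \((D)\). By Theorem~\ref{Thm:PseudoIsoEmbedding} the embedding is then pseudo-isometric, so points of \(\Oct\) in a common symp \(\Sigma\) are at mutual distance at most \(2\).
\item If \(\Sigma\) contains two of them at distance \(2\) with common neighbour \(x\), then \(\Sigma=x^\polarity\). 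Any further point of \(\Oct\) in \(\Sigma\) at distance at least \(2\) from \(x\) would be at distance at least \(3\) from some neighbour of \(x\), which pseudo-isometry forbids. So \(\Sigma\cap\Oct\) is the set of points at distance at most \(1\) from \(x\).
\item Otherwise the points of \(\Oct\) in \(\Sigma\) are pairwise collinear in \(\Oct\), hence lie on one line of \(\Oct\) since there are no triangles. They form either a single point (nonempty by \((D)\)) or, by fullness, that whole line.
\end{itemize}
This gives \((D^\star)\).
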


\begin{thmx}\label{Var2}
	Let \(\Oct\) be a generalised octagon fully embedded in a metasymplectic space \(\MS\).
	Then \(\Oct\) is a Ree-Tits octagon naturally associated to a polarity of \(\MS\) if, and only if, 
	\begin{itemize}
		\item[\((O)\)] there exist two points of \(\Oct\) that are opposite in \(\MS\),
		\item[\((P)\)] through every point \(x\) of \(\Oct\) there exists a symp \(x^\polarity\) of \(\MS\) such that all points collinear to \(x\) in \(\Oct\) are contained in \(x^\polarity\),
		\item[\((D)\)] every symp of \(\MS\) contains at least one point of \(\Oct\).
	\end{itemize}
\end{thmx}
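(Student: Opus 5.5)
We aim to prove Theorem~\ref{Var2}, reducing it to Theorem~\ref{Main}. The ``only if'' direction we settle first, using the known properties of the natural embedding of a Ree-Tits octagon as the fixed structure of a polarity $\polarity$ of $\MS$. The points of $\Oct$ are the absolute points $x\in x^\polarity$. The lines of $\Oct$ through $x$ are the absolute lines in $x^\polarity$, which gives $(P)$. Every symp $\xi$ contains the absolute point determined by $\xi^\polarity$ and $\xi$ (as in \cite{Rijpert2025}, every simplex is mapped to an incident one, and the chamber system of absolute flags is the octagon), which gives $(D)$. Opposite chambers of the octagon are opposite in the $\mathsf{F_4}$ building, so points at distance $8$ in $\Oct$ are opposite in $\MS$, which gives $(O)$. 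The real work is the converse, and there the plan is to prove that $(O)$, $(P)$, $(D)$ together imply $(S)$, and then to invoke Theorem~\ref{Main}.

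Assume $(O)$, $(P)$, $(D)$. We first record local facts.
\begin{itemize}
\item[(a)] For each point $x$ of $\Oct$, the symp $x^\polarity$ is unique. If the $\Oct$-lines through $x$ lay in two symps, they would lie in their intersection, a single line or a plane. A plane would force all $\Oct$-lines through $x$ into a plane. One then rules this out by a counting or $(D)$-style argument on the symps through that plane, or by the thickness of $\Oct$.
\item[(b)] If $x\sim y$ in $\Oct$, then $y$ lies in the line $xy\subseteq x^\polarity\cap y^\polarity$.
\end{itemize}
Next we compare distances. For $\Oct$-distance $2$, say $x\sim z\sim y$, the points $x,y$ lie in $z^\polarity$, hence are collinear or symplectic in $\MS$. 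The key claim is that they are never collinear. Conversely, points that are symplectic must be at $\Oct$-distance $2$; pairs at larger $\Oct$-distance must be shown to be special or opposite.

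To get this we propagate $(O)$. Take $a,b\in\Oct$ opposite in $\MS$, and let $d$ be their $\Oct$-distance. Walking along an $\Oct$-path from $a$ to $b$ and using that $\MS$-distance changes by at most one step per collinearity, we argue that $d$ must be $8$. Along a geodesic, points at $\Oct$-distances $2,4,6$ must then realise the $\MS$-relations symplectic, special, opposite in order. Since $\mathrm{Aut}$-transitivity is not available, the actual propagation is geometric. Given an opposite pair $(a,b)$ at $\Oct$-distance $8$, every point $a'\sim a$ not on the geodesic lies on a line $L$ of $\Oct$. On $L$, exactly one point is non-opposite to $b$, namely the point of $L$ that is close to $b$ in $\MS$, via the standard line-versus-point lemma in metasymplectic spaces. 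This point is the projection along $\Oct$, so the other points of $L$ are opposite $b$. By connectivity of the opposition graph of $\Oct$, every pair at $\Oct$-distance $8$ is then opposite in $\MS$.

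From ``$\Oct$-opposite implies $\MS$-opposite'' we deduce $(S)$. Suppose $x,y$ are at distance $2$ and collinear in $\MS$. Choose $u$ opposite both in $\Oct$, at distance $8$ from each. The $\MS$-distances from $u$ to $x$ and to $y$ are then both ``opposite'', and the common neighbour $z$ is at $\Oct$-distance $6$ from $u$. Using (b) and the symp $z^\polarity$ containing the plane-or-line through $x,y,z$, we derive that some point of $z^\polarity$ is forced close to $u$. This contradicts the structure of points opposite $u$ relative to a symp containing points opposite $u$. Similarly, symplectic pairs at larger distance would yield a symp containing two points whose geodesics produce an $\Oct$-circuit shorter than $16$, via $(D)$ applied to the symp they span.

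\textbf{Main obstacle.} The hardest step is the propagation of opposition, specifically proving that on an $\Oct$-line through a point opposite to $b$ exactly one point fails to be opposite $b$, and that it coincides with the $\Oct$-projection. If the direct argument stalls, I would instead show that the set of pairs that are $\Oct$-opposite and $\MS$-opposite is closed under the natural ``adjacent opposite'' moves. I would use $(P)$ to control $x^\polarity$ against $b$: for $x$ opposite $b$, the symp $x^\polarity$ contains a unique point special or symplectic to $b$.
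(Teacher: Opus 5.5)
Your plan for the converse has the same skeleton as the paper's. You propagate $(O)$ to all pairs at $\Oct$-distance $4$ as in Lemma~\ref{L20}, deduce $(S)$, and apply Theorem~\ref{Main}. (That is collinearity distance; in your incidence-graph convention the relations symplectic, special, opposite sit at $4,6,8$, not $2,4,6$.)

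\textbf{The gap is in deducing $(S)$.} For $x\sim z\sim y$ with $x\col y$, you pick $u$ at distance $4$ from $x$ and $y$ and at distance $3$ from $z$. This configuration contains no contradiction:
\begin{itemize}
\item $u$ is far from $z^\polarity$, since it is opposite $x\in z^\polarity$.
\item The neighbour $z_1$ of $z$ towards $u$ lies in $z^\polarity$ and is collinear or symplectic to $u$ by $(P)$. It is therefore just the unique point of $z^\polarity$ symplectic to $u$.
\item The points of the plane $\langle x,y,z\rangle$ not opposite $u$ form the line $z_1^\perp\cap\langle x,y,z\rangle$ through $z$, which misses $x$ and $y$.
\end{itemize}
So no point of $z^\polarity$ is forced to be collinear with $u$.

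The auxiliary point that works is $w$ with $d(x,w)=4$ and $d(y,w)=2$. Then $x\opp w$, while $x\col y$ and $y$ is collinear or symplectic to $w$ by $(P)$. This contradicts Lemma~\ref{Lem:SSOpp}.

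Your argument that symplectic pairs are at distance $2$ (``$(D)$ applied to the symp they span'') is also empty, because that symp already contains both points. What you actually need is that pairs at distance $3$ are special, and the same device gives it. If $d(x,y)=3$, take $w\col y$ on an $\Oct$-line through $y$ with $d(x,w)=4$. Then $x\opp w$, and Lemma~\ref{Lem:SSOpp} forbids $x$ from being collinear or symplectic to $y$.

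\textbf{The step you flag as the main obstacle is immediate.} The $\Oct$-projection of $b$ onto $L$ is at distance $3$ from $b$. By $(P)$ and the second assertion of Lemma~\ref{Lem:SSOpp} it is not opposite $b$, and Corollary~\ref{Cor:PointLine} then makes every other point of $L$ opposite $b$. The same ingredient shows that opposite pairs of $\Oct$ are at distance $4$. The argument ``$\MS$-distance changes by at most one step'' does not do this: opposite points of $\MS$ are at collinearity distance $3$, so a path of three collinearities excludes nothing. Fact (a) is not needed.

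\textbf{Your justification of $(D)$ in the ``only if'' direction does not hold.} It rests on the false claim that a polarity maps every simplex to an incident one; only absolute simplices are. The ``absolute point determined by $\xi$ and $\xi^\polarity$'' is never constructed. Density is a genuine theorem here: the paper obtains it from ovoidality (Lemma~\ref{Lem:RTovoidal}) via Theorem~\ref{Thm:Dual+OvoidalEmbedding}. Instead, take $(S)$, $(P)$, $(D)$ from the forward direction of Theorem~\ref{Main}, and get $(O)$ from $(S)$ via Theorem~\ref{Thm:PseudoIsoEmbedding}.
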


Note that our results not only provide characterisations of the natural embeddings of the Ree-Tits octagons, but also of the Ree-Tits octagons themselves.  However, besides free constructions and variations of these, no other generalised octagons are known; in particular the only known finite  generalised octagons are the Ree-Tits octagons over finite fields of characteristic~$2$.

\textbf{Structure of the paper} \\
After a brief introduction (Section~\ref{Sec:Background}), we show our main results using a number of steps, which independently prove various equivalences of conditions on embedded generalised octagons in metasymplectic spaces.

First (Section~\ref{Sec:PseudoIsoEmbedding}, Theorem~\ref{Thm:PseudoIsoEmbedding}), we investigate \emph{pseudo-isometric} embeddings (Definition~\ref{Def:PseudoIso}) and show that an embedding is pseudo-isometric if, and only if,
\begin{itemize}
	\item[\((S)\)] pairs of points of \(\Oct\) are at mutual distance \(2\) if, and only if, they are symplectic in \(\MS\).
\end{itemize}
 
Next (See Section~\ref{Sec:StronglyPolarisedEmbedding}, Theorem~\ref{Thm:StronglyPolarisedEmbedding}), we show that a pseudo-isometric embedding is \emph{strongly polarised} (Definition~\ref{Def:StronglyPolarised}) if, and only if, it is polarised (Definition~\ref{Def:Polarised}), that is, 
\begin{itemize}
	\item[\((P)\)] through every point \(x\) of \(\Oct\) there exists a symp \(x^\polarity\) of \(\MS\) such that all points collinear to \(x\) in \(\Oct\) are contained in \(x^\polarity\),
\end{itemize}

Then (See Section~\ref{Sec:Dual+OvoidalEmbedding}, Theorem~\ref{Thm:Dual+OvoidalEmbedding}), we show that a pseudo-isometric and strongly polarised embedding is \emph{\dual} (Definition~\ref{Def:Dual}) and \emph{ovoidal} (Definition~\ref{Def:Ovoidal}) if, and only if, it is \emph{dense} (Definition~\ref{Def:Dense}), that is, 
\begin{itemize}
	\item[\((D)\)] every symp of \(\MS\) contains at least one point of \(\Oct\).
\end{itemize}

Finally (Section~\ref{Sec:ReeTits}), we make the connection to the Ree-Tits Octagons and use the following consequence of Theorem 4.33 in \cite{Rijpert2025} as a lemma:
\begin{lemma}\label{Cor:KeyCor}
	Let \(\Oct\) be a generalised octagon fully embedded in a metasymplectic space \(\MS\).
	Then, \(\Oct\) is a Ree-Tits octagon naturally associated to a polarity of \(\MS\) if the embedding of \(\Oct\) in \(\MS\) is pseudo-isometric, strongly polarised, \dual\ and ovoidal.
\end{lemma}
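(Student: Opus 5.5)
This lemma will be obtained by translating our geometric hypotheses into the hypotheses of Theorem 4.33 of \cite{Rijpert2025}, which are phrased for a subcomplex of the building $\Delta$ of type $\mathsf{F_4}$ underlying $\MS$. The plan is to build from the embedded octagon $\Oct$ a subcomplex $\Sigma$ of $\Delta$ and check the technical conditions one by one. Let $\polarity$ be the map from strong polarisation, sending each point $x$ of $\Oct$ to the symp $x^\polarity$, and each line $L$ of $\Oct$ to a plane or line of $\MS$ (the object of dual type). The vertices of $\Sigma$ are the points of $\Oct$ (type $1$), the lines of $\Oct$ (type $2$), the images $L^\polarity$ (type $3$) and the symps $x^\polarity$ (type $4$). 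A flag in $\Oct$ and its image under $\polarity$ together determine a chamber of $\Delta$.

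The first step is to verify that $\Sigma$ is a well-defined subcomplex, that is, that the assigned vertices are pairwise incident whenever they should be. Strong polarisation (Definition~\ref{Def:StronglyPolarised}) should give exactly the incidences $x \in L \Rightarrow x \in L^\polarity$, $L^\polarity \subseteq x^\polarity$, and so on. We then establish that $\Sigma$ is a thin-in-type-pairs chamber subcomplex whose residues of type $\{1,2\}$ and $\{3,4\}$ are generalised octagons. The first is $\Oct$ itself; the second is its image under $\polarity$, isomorphic to $\Oct$ by the symmetric property (Definition~\ref{Def:Dual}). Symmetry is the ingredient that replaces the self-duality presupposed in \cite{Rijpert2025}. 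It should also allow us to define the inverse correspondence from symps containing $\Oct$-points back to $\Oct$-points, which yields the polarity on $\Sigma$.

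The second step is to match distances. Pseudo-isometry (Definition~\ref{Def:PseudoIso}) tells us how the $\Oct$-distance between two points determines their relation in $\MS$: equal, collinear, symplectic, special, or opposite. Hence two chambers of $\Sigma$ at octagon distance $d$ are at the prescribed Weyl distance in $\Delta$. This is precisely the "Weyl-distance compatibility" condition of Theorem 4.33. Ovoidality (Definition~\ref{Def:Ovoidal}) supplies the remaining covering condition. In each relevant residue, such as the polar space of a symp $x^\polarity$, the $\Oct$-data form an ovoid–spread pair. This corresponds to the requirement in \cite{Rijpert2025} that every panel of the right type meets $\Sigma$ in the expected way.

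Once these conditions are checked, Theorem 4.33 of \cite{Rijpert2025} yields a polarity of $\Delta$ whose absolute chambers are exactly those of $\Sigma$, so $\Oct$ is the associated Ree–Tits octagon. I expect the main obstacle to be the distance-matching step for octagon distances $6$ and $8$. Pseudo-isometry must then be combined with symmetry to control the relative position of the images $L^\polarity$ and $x^\polarity$, not just of the points. I would first handle it by applying the distance-$2$ and distance-$4$ cases inside residues of symps, then extend along paths in $\Oct$.
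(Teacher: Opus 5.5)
Your overall strategy matches the paper's: build the subcomplex of $\Delta$ spanned by the pairs $\{x,x^\polarity\}$ and $\{L,L^\polarity\}$, then invoke Theorem~4.33 of \cite{Rijpert2025}. The gap is in what you then verify. You guess the hypotheses of that theorem (a ``Weyl-distance compatibility'' and a ``covering condition''), and the step you single out as the main obstacle is not established. Pseudo-isometry only records how the \emph{points} of two chambers are related. That does not determine the Weyl distance between the chambers, since it says nothing about how $x^\polarity,y^\polarity$ or $L^\polarity,M^\polarity$ sit. For example, two chambers whose points are opposite need not be opposite chambers. So your ``Hence'' does not follow, and the sketch for octagon distances $6$ and $8$ is not an argument. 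As written, the proof is incomplete exactly where you expect difficulty.

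The missing observation is that Theorem~4.33 asks for much less, and each of its conditions is one of the hypotheses restated. Three things are needed:
(1) For each line $L$ of $\Oct$, the map $x\mapsto x^\polarity$ is a bijection from the points of $L$ onto the symps through the plane $L^\polarity$.
(2) For each point $x$ of $\Oct$, the lines of $\Oct$ through $x$ form an ovoid of the $\{x,x^\polarity\}$-GQ, and the planes $L^\polarity$ with $x\in L$ form an ovoid of its dual.
(3) Whenever $x,y$ are opposite points of $\Oct$, the symps $x^\polarity,y^\polarity$ are opposite.

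Each is immediate:
\begin{itemize}
\item Condition (3) is verbatim the second clause of Definition~\ref{Def:StronglyPolarised}, so no distance-matching is needed. You used strong polarisation only for incidences and overlooked this clause.
\item In condition (1), surjectivity is the fullness of $\overline{\Oct}$ in $\overline{\MS}$ required by Definition~\ref{Def:Dual}. Injectivity holds because collinear points go to adjacent, hence distinct, symps.
\item Condition (2) is literally Definition~\ref{Def:Ovoidal}.
\end{itemize}

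Two smaller corrections. The plane $L^\polarity$ and the incidences $L\subseteq L^\polarity\subseteq x^\polarity$ come from the symmetric hypothesis, or equivalently from $L^\polarity=x^\polarity\cap y^\polarity$ for $x,y\in L$, not from strong polarisation. Also, $L^\polarity$ is always a plane, never a line.
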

We prove this lemma explicitly in Section~\ref{Sec:ReeTits}.

\section{Background}\label{Sec:Background}
Since we are only interested in generalised octagons and metasymplectic spaces, we do not intend to provide a very general introduction. Hence, one should be aware that some definitions here are more restrictive than in other publications; we specifically work our way to our main protagonists in an economical way.  
\begin{definition}[\bf Geometry]
	A \emph{geometry (of rank \(n\))}  is an \((n+1)\)-tuple \((\V_1,\V_2,\dots,\V_n,\C)\) where \(\V_1\), \(\V_2\), \dots, \(\V_n\) are disjoint non-empty sets and \(\C\) is a subset of \(\V_1\times\V_2\times\dots\times\V_n\).
\end{definition}

Let \(\Gamma = (\V_1,\V_2,\dots,\V_n,\C)\) be a geometry.
We call elements of \(V_1\) \emph{points}, elements of \(V_2\) \emph{lines} and elements of \(\C\) \emph{chambers}. 
In general, all elements of \(\V_1\cup\dots\cup \V_n\) will be referred to as \emph{elements of the geometry}.

Let \(x\) be an element of \(\V_i\) and \(y\) be an element of \(\V_j\) with \(i\neq j\) and \(i,j\in \{1,2,\dots,n\}\). We say that \(x\) and \(y\) are \emph{incident} and write \(x\I^\Gamma y\) if they are contained in a common chamber. 
Two points \(x\) and \(y\) of a geometry \(\Gamma\) are called collinear, denoted by \(x\col y\), if there exists a line incident with both.
The \emph{incidence graph} of a geometry is the graph with as vertices the elements of the geometry and edges when incident. 

The\emph{ collinearity graph} of a geometry is the graph with as vertices the points of the geometry, adjacent when collinear. 
We call a geometry \emph{connected} if its incidence graph is connected.	 Note that in a connected geometry, every element is contained in a chamber. 

\begin{definition}[\bf Generalised $n$-gon]
	A \emph{generalised \(n\)-gon} \(\Gamma\) of \emph{order $(s,t)$} is a rank~\(2\) geometry \((\P^\Gamma,\L^\Gamma,\C^\Gamma)\) which satisfies the following axioms:
	\begin{itemize}
		\item The incidence graph has diameter \(n\) and girth \(2n\).
		\item Every point is incident with $t+1\geq 3$ lines and every line is incident with $s+1\geq 3$ points.
	\end{itemize}
\end{definition}
Since in a rank 2 geometry, the incidence relation is essentially the same as the set of chambers, we sometimes write \((\P^\Gamma,\L^\Gamma,\I^\Gamma)\) instead of \((\P^\Gamma,\L^\Gamma,\C^\Gamma)\). 
\begin{definition}[\bf Dual]
	The \emph{dual}, denoted \(\Gamma^D\), of a geometry \(\Gamma = (\V_1,\V_2,\dots,\V_n,\C)\) is the geometry \((\V_n,\V_{n-1},\dots,\V_1,\C^D)\) where \[(v_1,v_2\dots,v_n)\in \C \iff (v_n,v_{n-1}\dots,v_1)\in \C^D.\]
\end{definition}
The following is obvious.
\begin{corollary}
	The dual of a generalised \(n\)-gon is itself a generalised \(n\)-gon.
\end{corollary}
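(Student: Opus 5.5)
The statement to prove is the corollary that the dual $\Gamma^D$ of a generalised $n$-gon $\Gamma=(\P^\Gamma,\L^\Gamma,\C^\Gamma)$ is again a generalised $n$-gon. The plan is to observe that both axioms in the definition are symmetric under interchanging points and lines, except for the labelling of the parameters. We take $\Gamma^D=(\L^\Gamma,\P^\Gamma,\C^D)$, where $(p,L)\in\C^\Gamma$ if and only if $(L,p)\in\C^D$. The points of $\Gamma^D$ are thus the lines of $\Gamma$, and its lines are the points of $\Gamma$.

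First we compare the incidence graphs. Two elements are incident in $\Gamma^D$ exactly when they lie in a common chamber of $\C^D$, hence exactly when they lie in a common chamber of $\C^\Gamma$. So the incidence graphs of $\Gamma$ and $\Gamma^D$ have the same vertex set $\P^\Gamma\cup\L^\Gamma$ and the same edge set. Diameter and girth are invariants of a graph, so the incidence graph of $\Gamma^D$ has diameter $n$ and girth $2n$, as required by the first axiom.

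Second we check the regularity axiom. A point of $\Gamma^D$ is a line $L$ of $\Gamma$, and the lines of $\Gamma^D$ incident with it are the points of $\Gamma$ on $L$. There are $s+1\geq 3$ of these. Symmetrically, each line of $\Gamma^D$ is a point $p$ of $\Gamma$, which is incident with $t+1\geq 3$ elements, namely the lines of $\Gamma$ through $p$. Hence $\Gamma^D$ satisfies the second axiom with order $(t,s)$.

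Both axioms therefore hold for $\Gamma^D$, so it is a generalised $n$-gon, of order $(t,s)$. No step presents a real obstacle. The only point needing care is to confirm that the dualisation of a rank~$2$ geometry preserves incidence, which is immediate from the definition of $\C^D$.
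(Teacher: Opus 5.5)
Your proposal is correct: dualisation leaves the incidence graph unchanged, so diameter $n$ and girth $2n$ carry over, and the valency conditions swap to give order $(t,s)$. The paper calls the statement obvious and gives no proof, so your argument is exactly the routine check it leaves to the reader.
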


\begin{corollary}\label{Cor:UniqueClosest}
	Let \(x\) be a point and \(L\) be a line of a generalised \(2n\)-gon. Then, there exists a unique point on \(L\) which is closest to \(x\) (using the distance in the incidence graph).
\end{corollary}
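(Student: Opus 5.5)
Fix a point \(x\) and a line \(L\) of the generalised \(2n\)-gon \(\Gamma\), and write \(d\) for the distance in the incidence graph. The plan is to combine a parity argument with the girth axiom. Since the incidence graph is bipartite (points versus lines), \(d(x,L)\) is odd and \(d(x,y)\) is even for every point \(y\) incident with \(L\). Because the diameter is \(2n\), we have \(d(x,L)\le 2n-1\). Set \(k=d(x,L)\).

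\textbf{Existence.} Choose a shortest path from \(x\) to \(L\). Its penultimate vertex is a point \(y\) incident with \(L\), and \(d(x,y)=k-1\). Any other point \(z\) incident with \(L\) satisfies \(d(x,z)\ge d(x,L)-1=k-1\), because \(z\) is adjacent to \(L\). Hence \(y\) realises the minimum distance from \(x\) among the points on \(L\).

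\textbf{Uniqueness.} Suppose \(y\neq z\) are two points on \(L\) with \(d(x,y)=d(x,z)=k-1\le 2n-2\). Take shortest paths \(\gamma_y\) from \(x\) to \(y\) and \(\gamma_z\) from \(x\) to \(z\), and let \(w\) be the last common vertex of the two paths. Then the segment of \(\gamma_y\) from \(w\) to \(y\), the path \(y\,L\,z\), and the reverse of the segment of \(\gamma_z\) from \(z\) to \(w\) together form a closed walk.

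This closed walk is in fact a cycle, for two reasons:
\begin{itemize}
\item The two segments after \(w\) share no vertex by the choice of \(w\).
\item \(L\) does not lie on either segment. If \(L\) lay on \(\gamma_y\), then \(d(x,L)<d(x,y)=k-1\), contradicting \(d(x,L)=k\).
\end{itemize}

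The length of this cycle is at most
\[
2(k-1)+2\le 2(2n-2)+2=4n-2<4n,
\]
which contradicts the girth \(4n\) of the incidence graph of a generalised \(2n\)-gon. So the closest point is unique.

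The only delicate point is verifying that the closed walk is genuinely a cycle, that is, that \(L\) does not occur on the shortest paths. This is handled by the distance comparison in the second item above.

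Incidentally, the result would fail for odd polygons (generalised \((2n+1)\)-gons), since there a point can be equidistant from all points of an opposite line. That is why the statement is restricted to generalised \(2n\)-gons.
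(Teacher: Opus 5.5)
Your proof is correct. The paper states this corollary without any proof, as an immediate consequence of the diameter-$2n$ and girth-$4n$ axioms, and your argument (parity gives $d(x,L)\le 2n-1$, so two equidistant closest points would close a cycle of length at most $4n-2$) is exactly the standard verification, including the check that $L$ cannot lie on either geodesic.
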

The following definitions are not standard, but rather convenient for us to later define metasymplectic spaces. Other names in the literature used for what we call shadows and residues are lower residues and upper residues, respectively.
\begin{definition}[\bf Shadow, Residue]
	Let \(\Gamma\) be a rank \(n\) geometry \((\V_1,\V_2,\dots,\V_n,\C)\) with \(n\) at least 2.
	
	The \emph{\(v\)-shadow-geometry} where \(v\) is an element of \(V_i\) with \(2\le i\le n\), is the geometry 
	\((\V_1',\V_2',\dots,\V_{i-1}',\C')\) with \(V_j' = \{w\in V_j| w\I^\Gamma v\}\) for all \( j \in \{1,\dots,i-1\}\) and \[(v_1,\dots,v_{i-1})\in \C' \iff \exists c\in \C \text{ with }  c = (v_1,\dots,v_{i-1},v, v_{i+1}\dots,v_n).\]
	
	The \emph{\(v\)-residue-geometry} where \(v\) is an element of \(V_i\) with \(1\le i\le n-1\) is the geometry 
	\((\V_{i+1}',\dots,\V_{n}',\C')\) with \(V_j' = \{w\in V_j| w\I^\Gamma v\}\) for all \( j \in \{i+1,\dots,n\}\) and \[(v_{i+1}\dots,v_{n})\in \C' \iff \exists c\in \C \text{ with }  c = (v_1, v_2,\dots,v_{i-1},v, v_{i+1}\dots,v_n).\]
	
\end{definition}

\begin{remark}
	It is easy to see from the definition that the \(v\)-residue-geometry in a geometry \(\Gamma\) is the dual geometry of the \(v\)-shadow-geometry in the dual geometry of \(\Gamma\).
\end{remark}



\begin{definition}[\bf Polar Space]
	A \emph{polar space} of rank $n$ is a geometry \(\Gamma = (\V_1,\V_2,\dots,\V_n,\C)\) which satisfies:
	\begin{enumerate}
		\item[(PS1)] Every line is incident with at least 3 points
		\item[(PS2)] No point is collinear with every other point
		\item[(PS3)] Let \(x\) be a point and \(L\) a line not incident with \(x\) then either there exists a unique point on \(L\) collinear to \(x\) or all points of \(L\) are collinear to \(x\).
	\end{enumerate}
\end{definition}

\begin{definition}[\bf Metasymplectic Space]\label{def:MSS}
	A \emph{metasymplectic space} \(\MS\) is a connected rank \(4\) geometry \((\PMS,\LMS,\PiMS,\SMS,\CMS)\) with \(|\SMS|\ge 2\), where we call the elements of \(\PMS\), \(\LMS\), \(\PiMS\) and \(\SMS\), respectively, \emph{points}, \emph{lines}, \emph{planes} and \emph{symplecta} (or \emph{symps} for short), and which satisfies:
	\begin{enumerate}
		\item[(MS1)] For each symp \(\Sigma\), the \(\Sigma\)-shadow-geometry is a polar space of rank \(3\).
		\item[(MS2)] For each point \(x\), the \(x\)-residue-geometry is a dual polar space of rank \(3\).
		\item[(MS3)] The intersection of two distinct symps is either empty, a point, a line or a plane. 
	\end{enumerate}
	
\end{definition}
One can check that the ``dual'' of (MS3) also holds, and so we have:
\begin{corollary}\label{Cor:dualMS}
	The dual of a metasymplectic space is itself a metasymplectic space.
\end{corollary}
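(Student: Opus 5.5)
The plan is to exploit the symmetric form of the axioms of Definition~\ref{def:MSS}. Dualising reverses the type order, so points and symps exchange roles, and so do lines and planes. Concretely, let \(\MS^D=(\SMS,\PiMS,\LMS,\PMS,\CMS^D)\). We must check three things: it is connected, it has at least two points (equivalently, at least two symps of \(\MS^D\)), and it satisfies (MS1)--(MS3). Connectedness is immediate, since the incidence graph is unchanged by dualising.

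For \((\mathrm{MS1})^D\), take a symp of \(\MS^D\), which is a point \(x\) of \(\MS\). By the Remark following the definition of shadows and residues, the \(x\)-shadow-geometry in \(\MS^D\) is the dual of the \(x\)-residue-geometry in \(\MS\). By (MS2), that residue is a dual polar space of rank \(3\), so its dual is a polar space of rank \(3\). For \((\mathrm{MS2})^D\), the same remark shows that the \(\Sigma\)-residue in \(\MS^D\) is the dual of the \(\Sigma\)-shadow in \(\MS\), which by (MS1) is a polar space of rank \(3\). Hence the residue in \(\MS^D\) is a dual polar space of rank \(3\).

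The condition \(|\PMS|\ge 2\) is clear: a polar space of rank \(3\) has many points.

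The real content, and the main obstacle, is \((\mathrm{MS3})^D\). In \(\MS\) this reads: for two distinct points \(x,y\), the set of symps containing both is empty, a single symp, the symps through a common line, or the symps through a common plane. More precisely, in dual terms the elements of \(\MS\) incident with both \(x\) and \(y\) must form either nothing, one symp, one plane (with its symps), or one line (with its planes and symps). I would argue as follows.

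If \(x\) and \(y\) are collinear, the line \(xy\) is unique. Two lines sharing two points lie in a common symp by (MS1) and so coincide by (PS3)/polar-space linearity. Every element incident with both \(x\) and \(y\) is then incident with \(xy\), because symps and planes are subspaces of polar type. This gives the "line" case.

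If \(x\) and \(y\) are not collinear but lie in a common symp, we must show that this symp is unique. Suppose \(\Sigma\ne\Sigma'\) both contain \(x\) and \(y\). By (MS3), \(\Sigma\cap\Sigma'\) is a point, a line or a plane. Since it contains the two non-collinear points \(x,y\), it cannot be a single point, and it cannot be a line or a plane either, as those consist of mutually collinear points. This is a contradiction.

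Finally, no two distinct points of \(\MS\) can lie in a unique common plane without that plane being determined by the line \(xy\). So the "plane" case occurs vacuously, or more precisely is absorbed, because the dual statement only requires that the intersection be one of the listed shapes. The one remaining care point is that the intersection in the collinear case really is the full residue of \(xy\). This uses (MS2): in the residue of \(x\), which is a dual polar space, the line \(xy\) is a "point", and the elements through it form exactly its residue there.
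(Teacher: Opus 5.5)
Your overall route is the one the paper intends. Under dualisation, (MS1) and (MS2) are exchanged by the Remark on shadows and residues. The only real work is the dual of (MS3): for distinct points $x,y$ of $\MS$, the symps containing both must form the empty set, a single symp, the set of symps through a plane, or the set of symps through a line. Your handling of the swap is fine, and so is the non-collinear case, where you apply (MS3) to two symps through $x$ and $y$.

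The collinear case, however, rests on two justifications that do not hold as written.

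First, ``two lines sharing two points lie in a common symp by (MS1)'' is not something (MS1) provides. (MS1) only describes the shadow of a symp you already have. Moreover, by (MS2) the lines through $x$ are the maximal singular subspaces of a rank~$3$ polar space whose points are the symps through $x$. So two lines through a point can well lie in no common symp.

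Second, you claim that symps are subspaces, i.e.\ that a symp $\Sigma$ containing the collinear points $x,y$ must contain the line $xy$. This is precisely the content of the collinear case of (MS3)$^D$. It is not implied by (MS1), which says nothing about lines of $\MS$ not already incident with $\Sigma$. Your closing appeal to (MS2) only describes the elements through $xy$; it does not show why every symp through $x$ and $y$ is among them.

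Both steps can be repaired by the same use of (MS3) you already made in the non-collinear case.

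For uniqueness of the line, which you could also quote from Lemma~\ref{Lem:PP}: suppose lines $L\neq M$ both contain $x$ and $y$, and choose symps $\Sigma_L$ through $L$ and $\Sigma_M$ through $M$. If $\Sigma_L=\Sigma_M$, this contradicts that a polar space is a partial linear space. Otherwise $\Sigma_L\cap\Sigma_M$ is a line or a plane containing $x,y$. Its line $N$ through $x$ and $y$ lies in both polar spaces, which forces $L=N=M$.

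For the subspace property: given a symp $\Sigma$ through $x$ and $y$, take any symp $\Sigma'$ through $xy$. If $\Sigma\neq\Sigma'$, then $\Sigma\cap\Sigma'$ is a line or a plane containing $x$ and $y$. By uniqueness it contains $xy$, so $xy$ lies in $\Sigma$.

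With these inserted, your case distinction establishes (MS3)$^D$:
\begin{itemize}
\item collinear points: exactly the symps through $xy$;
\item non-collinear points in a common symp: exactly one symp;
\item otherwise: none.
\end{itemize}
The remaining option, the symps through a single plane, simply never occurs, which the axiom allows.
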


We state some basic properties of metasymplectic spaces. Proofs can be found in \cite{Cohen}. In the following lemmas, we introduce notation and terminology. 

\begin{lemma}[\bf Point-Point Relation]\label{Lem:PP}
	Let \(x\) and \(y\) be two distinct points of a metasymplectic space, then one of the following is true:
	\begin{itemize}
		\item \(x\) and \(y\) are \emph{collinear}, \(x\col y\): There exists a unique line, \(xy\), which contains both \(x\) and \(y\).
		\item \(x\) and \(y\) are \emph{symplectic}, \(x\sympl y\): There exist a unique symp, \(\sympThrough{x}{y}\), which contains both \(x\) and \(y\).
		\item \(x\) and \(y\) are \emph{special}, \(x\special y\): There exists a unique point, \(\specPoint{x}{y}\), collinear to both \(x\) and \(y\).
		\item \(x\) and \(y\) are \emph{opposite}, \(x\opp y\): There are no points collinear to both \(x\) and \(y\).
	\end{itemize}
\end{lemma}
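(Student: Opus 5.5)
Two points of a metasymplectic space satisfy exactly one of the four relations in the lemma; the plan is to prove this from (MS1)--(MS3) and their duals (Corollary~\ref{Cor:dualMS}), following Cohen's treatment.

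\textbf{Step 1: unique line.} Suppose \(x\) and \(y\) lie on two distinct lines \(L\) and \(M\). Take a symp \(\Sigma\) containing \(L\); it exists by (MS2), since the residue of the line \(L\) in the dual polar space at \(x\) is non-empty. Inside the polar space \(\Sigma\) two points lie on at most one line, so \(M \not\subseteq \Sigma\). Now use the dual of (MS3): two distinct lines meet in at most one point, otherwise the residue at \(x\), a dual polar space, would contain two "points" sharing two "lines", which is impossible. Hence collinear points span a unique line, and this case is disjoint from the other three.

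\textbf{Step 2: unique symp.} Now assume \(x\) and \(y\) are not collinear but lie in a common symp \(\Sigma\). If they lay in two symps, \(\Sigma \cap \Sigma'\) would contain two non-collinear points. By (MS3) it is then empty, a point, a line or a plane. A line or plane is a singular subspace, so all its points are collinear, which is a contradiction. So \(\sympThrough{x}{y}\) is unique. Inside \(\Sigma\), which is a polar space by (MS1), \(x\) and \(y\) have many common neighbours, so this case does not overlap with "special".

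\textbf{Step 3: special versus opposite.} Assume \(x\) and \(y\) lie in no common symp. We show they have at most one common neighbour. Suppose \(z \neq z'\) are both collinear to \(x\) and \(y\).
\begin{itemize}
\item If \(z \col z'\), the points \(x,z,z'\) lie in a plane or span a triangle. A quadrangle or triangle of lines must lie in a symp; this is the "quadrangle lemma" for metasymplectic spaces. Then \(x\) and \(y\) share a symp, a contradiction.
\item In general, take the pentagon or quadrangle \(x,z,y,z'\) and use the residue at \(z\), a dual polar space of rank 3. Look at the lines \(zx\) and \(zy\) as "points" of that residue; they are non-opposite there. The symp containing both lines \(zx\) and \(zy\) then contains \(x\) and \(y\). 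So we need only show that \(zx\) and \(zy\) lie in a common symp, i.e. are at distance at most 2 in the residue.
\end{itemize}
If \(zx\) and \(zy\) are at distance 3 there (opposite in the residue), then \(x \special y\) via \(z\). We must still show \(z'\) cannot exist. Here the key tool is the standard fact that if \(x\) is collinear to two points of a symp, then \(x\) is collinear to a line or to a point of that symp. Apply it to a symp through \(z, y, z'\) if \(z \sympl z'\), or handle \(z \special z'\) through a further such symp. If \(x\) and \(y\) have no common neighbour at all, they are opposite by definition.

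The main obstacle is this last step: showing that two non-symplectic points have at most one common neighbour. This needs the full local structure (MS2) and the interplay between symps, not just (MS3). My first attempt would be to prove the "point–symp relation" (a point is collinear to none, one point, a line, or a maximal singular subspace of a symp) and derive the uniqueness of \([x,y]\) from it.
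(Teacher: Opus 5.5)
Steps~1 and~2 are fine in substance. Step~1 is literally the dual of (MS3). Your gloss via the residue at $x$ is beside the point, since that residue does not see $y$.

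The genuine gap is Step~3, which carries the entire content of the lemma: you never prove that two non-collinear points lying in no common symp have at most one common neighbour.
\begin{itemize}
\item The ``quadrangle lemma'' you quote in the first bullet, applied to the $4$-cycle $x\perp z\perp y\perp z'\perp x$ with $z\not\perp z'$, \emph{is} that statement, so invoking it is circular.
\item The residue argument in the second bullet is inconsistent. You assert that $zx$ and $zy$ are non-opposite in the residue of $z$. Under your hypothesis they must be opposite there, since otherwise a symp through $z$ contains both lines, hence $x$ and $y$. The argument then stops at ``we must still show $z'$ cannot exist''.
\item The fallback via the point--symp relation (correctly stated: for $p\notin\Sigma$, the set $p^\perp\cap\Sigma$ is empty or a line) would only dispose of one subcase, even once proved. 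If $z$ and $z'$ lie in a common symp, then $x$ and $y$ are each collinear to two non-collinear points of $z\diamond z'$, hence lie in it. In the remaining subcase, $z$ and $z'$ are non-collinear, lie in no common symp, and have the two common neighbours $x$ and $y$. That is exactly the configuration you started from, so the argument runs in a circle.
\end{itemize}

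What is missing is an argument that excludes a quadrangle $x\perp z\perp y\perp z'\perp x$ in which neither diagonal pair lies in a symp, and nothing in your sketch supplies it. The paper does not prove the lemma itself but refers to Cohen, whose axiom system is of parapolar type. There, the fact that two points at distance $2$ with more than one common neighbour lie in a symp is part of the definition, and the four cases then follow essentially as in your Steps~1 and~2.

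If you want to start from (MS1)--(MS3), the natural route is to use that such a geometry is the geometry of a building of type $\mathsf{F_4}$. Two distinct points are then in one of exactly four relative positions (Weyl distances), all realised in an apartment. The uniqueness of $[x,y]$ for special points is a standard consequence of building theory: the intermediate vertex on a path of given reduced type is unique.
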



\begin{lemma}[\bf Symp-Symp Relation]\label{Lem:SympSymp}
	Let \(\Sigma_1\) and \(\Sigma_2\) be two distinct symps of a metasymplectic space, then one of the following is true:
	\begin{itemize}
		\item \(\Sigma_1\) and \(\Sigma_2\) are \emph{adjacent}: There exists a unique plane contained in both \(\Sigma_1\) and \(\Sigma_2\).
		\item \(\Sigma_1\) and \(\Sigma_2\) are \emph{symplectic}: There exist a unique point contained in both \(\Sigma_1\) and \(\Sigma_2\).
		\item \(\Sigma_1\) and \(\Sigma_2\) are \emph{special}: There exists a unique symp adjacent to both \(\Sigma_1\) and \(\Sigma_2\).
		\item \(\Sigma_1\) and \(\Sigma_2\) are \emph{opposite}: There are no symps adjacent to both \(\Sigma_1\) and \(\Sigma_2\).
	\end{itemize}
\end{lemma}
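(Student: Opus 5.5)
The strategy is to obtain the symp--symp classification (Lemma~\ref{Lem:SympSymp}) from the point--point classification (Lemma~\ref{Lem:PP}) by duality, using Corollary~\ref{Cor:dualMS}. Let \(\MS^D=(\SMS,\PiMS,\LMS,\PMS,\CMS^D)\) be the dual of \(\MS\). By Corollary~\ref{Cor:dualMS}, \(\MS^D\) is again a metasymplectic space. Its points are the symps of \(\MS\), its lines are the planes of \(\MS\), its planes are the lines of \(\MS\), and its symps are the points of \(\MS\). Incidence is unchanged, since a chamber is just read in reverse order.

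The first step is to translate the four point--point relations of \(\MS^D\) back into \(\MS\). This needs one small lemma on incidence in a metasymplectic space: an element of \(\PMS\) and an element of \(\SMS\) are incident precisely when the point lies in the symp, viewed through the \(\Sigma\)-shadow-geometry. I would also check that incidence between non-consecutive types, which is defined through common chambers, coincides in \(\MS\) and \(\MS^D\). Both facts are immediate from the definitions, since \(\C^D\) consists of the reversed chambers.

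With this in hand, two distinct symps \(\Sigma_1,\Sigma_2\) are distinct points of \(\MS^D\), and Lemma~\ref{Lem:PP} applied to \(\MS^D\) puts them in exactly one of four cases.
\begin{itemize}
\item They are collinear in \(\MS^D\): there is a unique plane of \(\MS\) incident with both. This is \emph{adjacent}.
\item They are symplectic in \(\MS^D\): there is a unique symp of \(\MS^D\), that is, a point of \(\MS\), incident with both. This is a unique point contained in both, i.e.\ \emph{symplectic}.
\item They are special in \(\MS^D\): there is a unique point of \(\MS^D\) collinear to both, that is, a unique symp of \(\MS\) sharing a plane with each. This is \emph{special}.
\item They are opposite in \(\MS^D\): there is no such symp. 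This is \emph{opposite}.
\end{itemize}
For the first and third cases I would also note that "collinear in \(\MS^D\)" means exactly "incident with a common plane of \(\MS\)", and that two distinct symps can share at most one plane. The latter follows from (MS3), because two planes spanning more than a plane cannot lie in the intersection of two distinct symps.

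The only genuine obstacle is Corollary~\ref{Cor:dualMS} itself, which is asserted rather than proved here, and in particular the dual of (MS3). If I had to verify it, I would argue as follows. Axioms (MS1) and (MS2) are swapped under duality, because the residue of \(x\) in \(\MS^D\) is the dual of the shadow of \(x\) in \(\MS\) (by the Remark). The dual of (MS3) says that two distinct points share either no symp, a unique symp, a unique line, or are equal. I would derive this from the point--point trichotomy in \cite{Cohen}, which is proved there directly from the axioms. Given that, the rest is bookkeeping. The alternative is to cite \cite{Cohen} for the symp version directly, since it is the same statement applied to the dual building of type \(\mathsf{F_4}\).
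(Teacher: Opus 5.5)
Your reduction is correct, but it is not what the paper does. The paper gives no argument for Lemma~\ref{Lem:SympSymp}; like the other basic relations, it simply refers to \cite{Cohen}.

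You instead observe that incidence is unchanged when chambers are reversed. Hence the four symp--symp relations are exactly the four point--point relations of Lemma~\ref{Lem:PP} read in $\MS^D$: collinear becomes adjacent, symplectic becomes meeting in exactly one point, and special and opposite keep their meaning. So only the point--point classification needs an independent proof. The uniqueness claims (unique common plane, unique common point, unique symp adjacent to both) then come for free.

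What your route gains is transparency and economy. What it costs is that everything now rests on Corollary~\ref{Cor:dualMS}, which the paper only asserts with ``one can check''. The work is therefore moved into the dual of (MS3), not removed.

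Your outline for that part is sound. (MS1) and (MS2) are exchanged via the Remark. The dual of (MS3) is the statement that the symps through two distinct points $x,y$ of $\MS$ form either nothing, a single symp, or the set of symps through a line. Two small corrections there:
\begin{itemize}
\item The bare trichotomy of Lemma~\ref{Lem:PP} does not quite give this. In the collinear case you also need that every symp containing $x$ and $y$ contains the line $xy$, i.e.\ that symps are subspaces. You can get this from Lemma~\ref{Lem:PS} applied to a third point of $xy$.
\item The clause ``or are equal'' in your formulation should be dropped, since $x\neq y$.
\end{itemize}

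The citation route avoids this bookkeeping. However, it hides what your approach makes explicit: Lemma~\ref{Lem:SympSymp} is just Lemma~\ref{Lem:PP} applied in the dual space.
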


\begin{lemma}[\bf Point-Symp Relation]\label{Lem:PS}
	Let \(x\) be a point and \(\Sigma\) a symp which does not contain \(x\). Then one of the following is true:
	\begin{itemize}
		\item \(x\) is \emph{close} to \(\Sigma\): All points in \(\Sigma\) collinear to \(x\) form a line \(L\). Every point of \(\Sigma\) not on \(L\) is either symplectic to \(x\) and collinear to all points of \(L\) or special to \(x\) and is collinear to a unique point on \(L\).
		\item \(x\) is \emph{far} from \(\Sigma\): There is a unique symp through \(x\) which meets \(\Sigma\) in a point \(y\). Every point of \(\Sigma\) different from \(y\) is either special to \(x\) and collinear to \(y\) or opposite to \(x\) and symplectic to \(y\).
	\end{itemize}
\end{lemma}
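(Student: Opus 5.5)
We must prove the Point-Symp Relation (Lemma~\ref{Lem:PS}): for a point \(x\) and a symp \(\Sigma\) not containing \(x\), either \(x\) is close to \(\Sigma\) or \(x\) is far from \(\Sigma\). The plan is to first determine the set \(x^\perp\cap\Sigma\) of points of \(\Sigma\) collinear to \(x\), and then use it to classify every other point of \(\Sigma\) by Lemma~\ref{Lem:PP}. Throughout we use three facts. Symps are convex subspaces: two symplectic points lie in a unique symp, and any point collinear to two non-collinear points of a symp lies in that symp. Residues are dual polar spaces by (MS2). Two distinct symps meet in at most a plane by (MS3).

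\textbf{Step 1.} We show that \(x^\perp\cap\Sigma\) is empty or a line. It contains no two non-collinear points \(a,b\): these would be symplectic in \(\Sigma\), and since \(x\) is collinear to both, convexity would put \(x\) in \(\Sigma\). So \(x^\perp\cap\Sigma\) is a clique, hence a subset of a singular subspace. Suppose it contained a plane's worth of points \(a,b,c\) spanning a plane \(\pi\). Then \(x\) together with \(\pi\) would span a singular \(3\)-space in the point residue. This is impossible, because by (MS2) singular subspaces have rank at most a plane: the maximal flags are point–line–plane–symp.

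\textbf{Step 2.} We show that if \(x^\perp\cap\Sigma\) is a single point \(y\), it is actually a line. Consider the residue at \(y\), a dual polar space of rank~\(3\). In it, \(\Sigma\) is a point and the line \(xy\) is a maximal singular subspace, that is, a plane-type element. In a dual polar space of rank \(3\), a point and a maximal element are at distance at most \(2\) steps. This gives a plane \(\alpha\) through \(xy\) and a symp \(\Sigma'\supset\alpha\) that is adjacent to \(\Sigma\) or equal to it.

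Reading distances carefully, there are two outcomes:
\begin{itemize}
\item If the residue distance is minimal, some plane through \(xy\) meets \(\Sigma\) in a line through \(y\). Every point of that line is collinear to \(x\), contradicting the assumption that \(y\) is the only such point.
\item Otherwise the configuration is the ``far'' one: \(y\) is the unique point of \(\Sigma\) collinear to \(x\), and the symp \(x\diamond z\) for suitable \(z\) meets \(\Sigma\) only in \(y\).
\end{itemize}
Making this dichotomy precise, and not conflating it with the empty case, is the main obstacle. I would handle it by passing to the residue of \(y\) and using the point–line relations in a dual polar space of rank~3.

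\textbf{Step 3 (close case).} Suppose \(x^\perp\cap\Sigma\) is a line \(L\). Take \(z\in\Sigma\setminus L\).
\begin{itemize}
\item If \(z\perp L\) entirely, then \(x\) and \(z\) have at least two common neighbours. They are therefore not special, and hence symplectic, and they are not collinear by Step 1.
\item Otherwise, the polar axiom (PS3) inside \(\Sigma\) gives a unique point \(u\in L\) collinear to \(z\). Then \(x\perp u\perp z\). To show \(u\) is the unique common neighbour of \(x\) and \(z\), suppose there were another; then \(x\) and \(z\) would be symplectic, and the symp \(x\diamond z\) would meet \(\Sigma\) in a set containing \(z\) and \(u\) and being convex. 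Checking that this forces \(z\) to be collinear to more of \(L\) gives a contradiction.
\end{itemize}

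\textbf{Step 4 (far case).} Suppose \(x^\perp\cap\Sigma=\emptyset\). Connectivity of the point residues (MS2) gives a point \(y\in\Sigma\) with \(x\sympl y\), that is, a symp through \(x\) meeting \(\Sigma\), and it meets \(\Sigma\) in exactly a point by Step 1. Uniqueness of this symp follows because two such symps would yield two symplectic points of \(\Sigma\) with suitable paths to \(x\), forcing a collinearity.

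Then classify \(z\in\Sigma\setminus\{y\}\):
\begin{itemize}
\item If \(z\perp y\), the point \(x\) is collinear to a point of \(yz\) near \(y\) in \(x\diamond y\). This makes \(x\) and \(z\) special via that point; uniqueness of the common neighbour follows from Step 1.
\item If \(z\sympl y\), any common neighbour of \(x\) and \(z\) would produce a second symp through \(x\) meeting \(\Sigma\), so \(x\equiv z\).
\end{itemize}
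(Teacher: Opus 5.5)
The paper does not prove this lemma; it is one of the background facts it quotes from Cohen. Your argument therefore has to stand on its own, and it has genuine gaps.

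\textbf{Step 4: existence in the far case.} This is the most serious gap. You get a point $y\in\Sigma$ with $x\sympl y$ from ``connectivity of the point residues''. Residues are local, though, and say nothing about how far $x$ is from $\Sigma$. Nothing you have written excludes that no point of $\Sigma$ is symplectic to $x$.

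A global argument is needed. Take a shortest path $x=p_0\perp p_1\perp\cdots\perp p_k\in\Sigma$ in the collinearity graph, which is connected because $\MS$ is.
\begin{itemize}
\item The point $p_{k-1}$ is close to $\Sigma$, with line $L$.
\item The residue of $p_{k-1}$ is a near hexagon. In it, the point $p_{k-1}p_{k-2}$ has a unique nearest point on the line $\langle p_{k-1},L\rangle$, at distance $1$ or $2$. By minimality this makes $p_{k-2}$ symplectic to some $w\in\Sigma$.
\item If $k\ge 3$, apply the close case to $p_{k-3}$ and $\sympThrough{p_{k-2}}{w}$ (unless $p_{k-3}$ lies in that symp). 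This puts $w$ within distance $2$ of $p_{k-3}$, contradicting minimality. Hence $k=2$.
\end{itemize}

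\textbf{Step 4: uniqueness and the special points.} Uniqueness of $y$ is only asserted (``forcing a collinearity''). Your argument for the points $z\perp y$ is also wrong as written. The point $x$ cannot be collinear to a point of $yz$, since $yz\subseteq\Sigma$ and $x^\perp\cap\Sigma=\emptyset$. The common neighbour lies on the line $z^\perp\cap(\sympThrough{x}{y})$ through $y$. It is the close case for $z$ relative to $\sympThrough{x}{y}$ that gives $x\special z$.

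\textbf{Step 2: the dichotomy is false.} This is exactly the obstacle you left open, and it closes in one line. Read the residue of $y$ as the polar space $\Pi_y$ whose points are the symps through $y$ and whose planes are the lines through $y$.
\begin{itemize}
\item $\Sigma$ is a point of $\Pi_y$ not in the plane $xy$, so it is collinear with a line of that plane.
\item In $\MS$ this means there is a plane $\alpha\supseteq xy$ and a line $K\subseteq\alpha\cap\Sigma$ through $y$. Hence $x\perp K$.
\end{itemize}
There is no ``otherwise'' branch. The configuration you place there, with $y$ collinear to $x$, is not the far case of the lemma, where no point of $\Sigma$ is collinear to $x$.

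\textbf{Step 3: a missing residue fact.} (MS3) allows $(\sympThrough{x}{z})\cap\Sigma$ to be just the line $zu$, and then your ``checking'' has nothing to work with. In the residue of $u$, however, two quads of a rank~$3$ dual polar space are either disjoint or share a line. So the two symps share a plane $\beta\supseteq zu$. Then $x^\perp\cap\beta$ is a line contained in $L$. This forces $L\subseteq\beta$ and hence $z\perp L$, a contradiction.
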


\begin{corollary}\label{Cor:NoOppInAdj}
	Let \(\Sigma_1\) and \(\Sigma_2\) be two symps of a metasymplectic space.
	If there exist points \(x\in \Sigma_1\) and \(y\in\Sigma_2\) which are opposite, then \(\Sigma_1\) and \(\Sigma_2\) are not adjacent.
\end{corollary}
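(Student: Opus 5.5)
Assume $x\in\Sigma_1$ and $y\in\Sigma_2$ are opposite, and suppose for a contradiction that $\Sigma_1$ and $\Sigma_2$ are adjacent (or equal). Then there is a plane $\pi$ contained in both $\Sigma_1$ and $\Sigma_2$. The idea is to find a single point $z\in\pi$ collinear to both $x$ and $y$; this contradicts $x\opp y$ by Lemma~\ref{Lem:PP}. The same argument works for $\Sigma_1=\Sigma_2$, where any two points are already collinear or symplectic and hence not opposite.

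The key input is that, by (MS1), the $\Sigma_i$-shadow-geometry is a polar space of rank $3$. In a rank $3$ polar space every point is collinear to at least a line of any plane (a generator). Indeed, take a line $L$ of the plane; by (PS3) the point is collinear to one or all points of $L$. Applying this to two distinct lines of the plane and using that the collinear points of the plane form a subspace, one sees the set of points of $\pi$ collinear to $x$ is either all of $\pi$ or a line $L_x\subseteq\pi$. This is the standard fact that $x^\perp\cap\pi$ is a hyperplane of $\pi$ or all of $\pi$. Collinearity inside $\Sigma_i$ coincides with collinearity in $\MS$, since lines of the shadow are lines of $\MS$.

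Apply this twice. In $\Sigma_1$, the point $x$ is collinear to all points of a line $L_x\subseteq\pi$, or to all of $\pi$ if $x\in\pi$ or $x$ is in the span. In $\Sigma_2$, likewise $y$ is collinear to all points of a line $L_y\subseteq\pi$. Two lines in the projective plane $\pi$ meet, so there is a point $z\in L_x\cap L_y$. If $z\neq x,y$, then $z$ is collinear to both $x$ and $y$. Degenerate cases are easy: if $z=x$ then $x\in\Sigma_2$, so $x,y$ lie in a common symp and are not opposite, and similarly if $z=y$. In every case we contradict $x\opp y$, so $\Sigma_1$ and $\Sigma_2$ are not adjacent.

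The only step needing care is the hyperplane claim in a rank $3$ polar space, and it follows from (PS3) directly. If $x$ is collinear to a point $p\in\pi$ and to all of a line $M\subseteq\pi$ not through $p$, then every line through $p$ in $\pi$ meets $M$. It therefore contains two points collinear to $x$, and by (PS3) all its points are collinear to $x$.
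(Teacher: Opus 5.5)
Your proof is correct, but it takes a slightly different route from the one the paper implicitly uses. The paper gives no argument of its own: it defers to Cohen and presents the statement as a corollary of the point--symp relation, Lemma~\ref{Lem:PS}. Along that route, suppose $\Sigma_1$ and $\Sigma_2$ share a plane $\pi$. If $x\in\Sigma_2$, then $x$ and $y$ lie in a common symp and are not opposite. If $x\notin\Sigma_2$, then $x$ is collinear with a line of $\pi\subseteq\Sigma_2$, so it cannot be far from $\Sigma_2$, since a far point is collinear with at most one point of $\Sigma_2$. Hence $x$ is close to $\Sigma_2$, and in the close case every point of $\Sigma_2$ is collinear, symplectic or special to $x$, never opposite.

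You never invoke Lemma~\ref{Lem:PS}; instead you build the common neighbour $z\in L_x\cap L_y$ directly. The paper's route needs the ``line of $\pi$'' fact only for $x$, but it rests on the full point--symp classification. Yours uses that fact for both $x$ and $y$, together with the fact that two lines of a projective plane meet. In exchange it is self-contained from (MS1) and the definition of opposite in Lemma~\ref{Lem:PP}.

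One small tightening in your hyperplane claim. Applying (PS3) to just two distinct lines of $\pi$ may return their intersection point both times, so that alone does not give a line. The clean formulation is as follows: for $x\notin\pi$, the set of points of $\pi$ collinear to $x$ is a subspace (by (PS3)) that meets every line of $\pi$ (again by (PS3)). It is therefore a line or all of $\pi$. Your final paragraph proves the other direction (more than a line forces all of $\pi$), which your argument does not actually need.
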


\begin{corollary}\label{Cor:NoColInOpp}
	Let \(\Sigma_1\) and \(\Sigma_2\) be two symps of a metasymplectic space.
	If there exist points \(x\in \Sigma_1\) and \(y\in\Sigma_2\) which are collinear, then \(\Sigma_1\) and \(\Sigma_2\) are not opposite.
\end{corollary}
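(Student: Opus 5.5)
The statement to prove is Corollary~\ref{Cor:NoColInOpp}: if \(x\in\Sigma_1\) and \(y\in\Sigma_2\) are collinear, then \(\Sigma_1\) and \(\Sigma_2\) are not opposite. I would argue by contraposition, in the same spirit as Corollary~\ref{Cor:NoOppInAdj}: assuming such a collinear pair exists, I would construct a symp adjacent to both \(\Sigma_1\) and \(\Sigma_2\), or else show that \(\Sigma_1,\Sigma_2\) coincide, are adjacent, or are symplectic. Any of these contradicts the definition of opposite in Lemma~\ref{Lem:SympSymp}. The only inputs are the local structure (MS1), (MS2), (MS3) and the Point--Symp relation, Lemma~\ref{Lem:PS}.

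Let \(L=xy\). The first step disposes of the easy cases.
\begin{itemize}
\item If \(\Sigma_1=\Sigma_2\), there is nothing to prove.
\item If \(\Sigma_1\cap\Sigma_2\neq\emptyset\), then by (MS3) and Lemma~\ref{Lem:SympSymp} the two symps are adjacent or symplectic, hence not opposite.
\end{itemize}
So assume \(\Sigma_1\cap\Sigma_2=\emptyset\). Then \(y\notin\Sigma_1\) and \(y\) is collinear with \(x\in\Sigma_1\). By Lemma~\ref{Lem:PS}, a point far from \(\Sigma_1\) is collinear only with the single point of \(\Sigma_1\) lying in the unique symp through it meeting \(\Sigma_1\); a point close to \(\Sigma_1\) is collinear with a whole line of \(\Sigma_1\). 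In both cases \(y\) has a collinear point in \(\Sigma_1\), which is the configuration we will use.

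The key step is to work in the residue of \(y\), which by (MS2) is a dual polar space of rank~3. In that residue:
\begin{itemize}
\item symps through \(y\) correspond to points of the residue;
\item lines through \(y\) correspond to maximal singular subspaces (planes) of the residue;
\item a line \(M\) through \(y\) and a symp through \(y\) are incident exactly when the corresponding point lies in the corresponding plane;
\item two symps through \(y\) are adjacent exactly when the corresponding points are collinear in the dual polar space, meaning they share a plane through \(y\).
\end{itemize}
Since \(L\) lies in some symp through \(y\), the set of symps through \(L\) is nonempty.

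I would then choose a symp \(\Sigma\) through \(L\) that shares a plane with \(\Sigma_2\). This can be done in the residue of \(y\): \(\Sigma_2\) is a point \(p\) there, \(L\) is a plane \(\pi\), and in a dual polar space every point is collinear with some point of a given maximal subspace (distance at most~1), or lies in it. Taking \(\Sigma\) to correspond to that point of \(\pi\), we get a symp through \(L\) that is equal or adjacent to \(\Sigma_2\).

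Next I would show that \(\Sigma\) is adjacent to \(\Sigma_1\), or at least meets it. Indeed \(x\in\Sigma\cap\Sigma_1\), so \(\Sigma\) and \(\Sigma_1\) are symplectic or adjacent. If they are adjacent, then \(\Sigma\) is a common adjacent symp and we are done. If they only meet in \(x\), I would repeat the argument in the residue of \(x\), now choosing a symp through \(L\) that shares a plane with \(\Sigma_1\). The obstacle is to make one choice work for both \(\Sigma_1\) and \(\Sigma_2\) simultaneously.

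I expect this simultaneous choice to be the main difficulty. My fallback is to exhibit a point of \(\Sigma_2\) that is not opposite some point related to \(\Sigma_1\) in the way the opposite relation forbids, using the characterisation from the proofs in \cite{Cohen}: opposite symps are exactly those for which every point of one is far from the other. Since \(y\in\Sigma_2\) is collinear with \(x\in\Sigma_1\), the point \(y\) is either in \(\Sigma_1\) or close to it, because a far point that is collinear with a point \(z\) of \(\Sigma_1\) would have to be close. Hence \(y\) is not far from \(\Sigma_1\), and the symps are not opposite. So the crux is establishing that characterisation, which I would cite from \cite{Cohen} rather than reprove.
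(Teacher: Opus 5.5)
\textbf{There is a genuine gap.} Your residue argument stops exactly where the content of the statement lies. Finding a symp through \(L=xy\) adjacent to \(\Sigma_2\) alone, or to \(\Sigma_1\) alone, is easy. The corollary is precisely the claim that one symp can be chosen adjacent to both, and you leave that open.

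The fallback does not close it. By Lemma~\ref{Lem:PS}, a point far from \(\Sigma_1\) is collinear with \emph{no} point of \(\Sigma_1\): the point where the unique connecting symp meets \(\Sigma_1\) is symplectic to it, not collinear. So your earlier remark that a far point is collinear with that one point is wrong, and it contradicts your own later sentence. Consequently, for disjoint symps, ``opposite implies every point of \(\Sigma_2\) is far from \(\Sigma_1\)'' is just the contrapositive of the corollary. Citing it from \cite{Cohen} amounts to citing the statement itself.

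The paper gives no argument either; it lists this among basic facts referred to \cite{Cohen}. However, the statement is derivable from the preliminaries: it is exactly the dual, via Corollary~\ref{Cor:dualMS}, of Corollary~\ref{Cor:NoOppInAdj}.

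\textbf{The missing idea} is to use the full polar-space property, not merely ``collinear with some point''. The structure you describe (symps through \(y\) as points, planes through \(y\) as lines, lines through \(y\) as maximal singular subspaces) is the rank~3 polar space whose dual is the residue in (MS2). You call it the dual polar space, which is a terminology slip.

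\begin{enumerate}
\item In that polar space at \(y\), \(\Sigma_2\) does not lie in the maximal singular subspace formed by the symps through \(L\), because \(x\notin\Sigma_2\). Hence \(\Sigma_2\) is collinear with a whole \emph{hyperplane} of it. That is, the symps through \(L\) adjacent to \(\Sigma_2\) are exactly those through some plane \(\alpha_2\supseteq L\).
\item Likewise, working at \(x\), where \(y\notin\Sigma_1\), the symps through \(L\) adjacent to \(\Sigma_1\) are exactly those through some plane \(\alpha_1\supseteq L\).
\item The planes and symps through \(L\) form one projective plane, the same whether viewed from \(x\) or from \(y\). So \(\alpha_1\) and \(\alpha_2\) lie in a common symp \(\Sigma\).
\item \(\Sigma\) contains \(y\notin\Sigma_1\) and \(x\notin\Sigma_2\), so it differs from both and is adjacent to both. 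By Lemma~\ref{Lem:SympSymp}, \(\Sigma_1\) and \(\Sigma_2\) are therefore not opposite.
\end{enumerate}

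This is the dual of the short proof of Corollary~\ref{Cor:NoOppInAdj}. There, a point of each of two adjacent symps is collinear with a line of their common plane, and two lines of a plane meet.
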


\begin{corollary}[\bf Point-Line Relation]\label{Cor:PointLine}
	Let \(x\) be a point and let \(L\) be a line of a metasymplectic space. Then one of the following is true:
	\begin{itemize}
		\item \(x\) is on \(L\).
		\item One point on \(L\) is collinear to \(x\), the rest are symplectic to \(x\).
		\item One point on \(L\) is collinear to \(x\), the rest are special to \(x\).
		\item One point on \(L\) is symplectic to \(x\), the rest are special to \(x\). In this case, there exists a point \(y\) collinear to \(x\) and collinear to all points on \(L\).
		\item All points on \(L\) are special to \(x\). In this case, there exists a line \(M\) such that for any point \(y\) on \(L\), the point \(\specPoint{x}{y}\) is a different point on \(M\).
		\item One point on \(L\) is special to \(x\), the rest are opposite to \(x\).
	\end{itemize}
\end{corollary}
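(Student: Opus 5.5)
We have to prove Corollary~\ref{Cor:PointLine}, the classification of how a point \(x\) sits relative to a line \(L\). We use Lemma~\ref{Lem:PP}, Lemma~\ref{Lem:PS} and (MS2). We assume \(x\notin L\) and split into cases according to whether some point of \(L\) is collinear with \(x\), and otherwise on the relation of \(x\) to a symp through \(L\).

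\textbf{Case 1: some point \(p\in L\) is collinear with \(x\).} Then \(p\) is the only such point; otherwise \(x\), \(p\), \(p'\) would give a triangle. Any two collinear lines \(xp\) and \(L\) through \(p\) sit in a common symp or not, and this is read off in the residue of \(p\), which is a dual polar space of rank \(3\) by (MS2). Its points are the lines through \(p\), and its elements of top type are the symps through \(p\).
\begin{itemize}
\item[(a)] If \(xp\) and \(L\) lie in a common symp \(\Sigma\), then inside the polar space \(\Sigma\) (MS1) we get \(x\not\perp q\) for \(q\in L\setminus\{p\}\). Hence \(x\) and \(q\) are non-collinear points of a symp, so they are symplectic. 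This is the second alternative.
\item[(b)] If no symp contains both lines, then the two lines through \(p\) are at maximal distance in the residue. For \(q\in L\setminus\{p\}\), the point \(p\) is a common neighbour of \(x\) and \(q\), so \(x,q\) are not opposite. If they were symplectic, the symp \(\sympThrough{x}{q}\) would contain all common neighbours, in particular \(p\), and hence both lines \(px\) and \(pq\), a contradiction. So \(x\special q\), the third alternative.
\end{itemize}

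\textbf{Case 2: no point of \(L\) is collinear with \(x\).} Choose a symp \(\Sigma\supseteq L\) and apply Lemma~\ref{Lem:PS} to \(x\) and \(\Sigma\). We cannot have \(x\in\Sigma\), since in a polar space every point is collinear with some point of each line (PS3).
\begin{itemize}
\item[(i)] \emph{\(x\) close to \(\Sigma\).} Let \(K=x^\perp\cap\Sigma\), a line disjoint from \(L\). In the polar space \(\Sigma\), either all of \(L\) is collinear with all of \(K\), or each point of \(L\) sees exactly one point of \(K\).
\begin{itemize}
\item If all of \(L\) is collinear with all of \(K\), Lemma~\ref{Lem:PS} makes every point of \(L\) symplectic to \(x\), each via the symp spanned by \(x\) and that point. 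All these symps would contain \(x\) and the plane \(\langle K,\cdot\rangle\). We will show this contradicts the residue structure: the points of \(L\) would give a plane through \(K\) in several symps, and the points of \(K\) collinear with \(x\) then force \(L\) into a common symp with \(x\), producing collinear points on \(L\). So this configuration is excluded.
\item Otherwise each \(q\in L\) is collinear with a unique point of \(K\), so each \(q\) is special to \(x\) or symplectic to \(x\) according to whether it lies outside or inside \(K^\perp\). In \(\Sigma\), \(K^\perp\cap L\) is either a single point or empty. If it is a single point, then exactly one point \(q_0\) is symplectic to \(x\), and the common neighbour \(y\in K\) of \(L\) gives the fourth alternative. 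If it is empty, all points of \(L\) are special to \(x\), with \(\specPoint{x}{q}\in K\) varying injectively. Here \(M=K\) gives the fifth alternative.
\end{itemize}
\item[(ii)] \emph{\(x\) far from \(\Sigma\).} Let \(y\) be the special point. If \(y\in L\), the remaining points are symplectic to \(y\) inside \(\Sigma\), so they are opposite \(x\), giving the sixth alternative. If \(y\notin L\), we choose instead a symp through \(L\) and \(y\)'s projection, to reduce to the close case or to the \(y\in L\) case. Such a symp exists when \(y\) is collinear with a point of \(L\): the line from \(y\) to that point together with \(L\) lie in a plane of \(\Sigma\).
\end{itemize}

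The main obstacle is consistency of the case distinction, i.e.\ independence from the chosen symp \(\Sigma\) in Case~2. The far case with \(y\notin L\) and \(y\) not collinear to \(L\) must be handled by noting that the points of \(L\) are then all opposite \(x\) or symplectic to \(y\), and ruling this out via the fact that opposite points have no common neighbours, applied along a path from \(x\). For all of this I would lean on the standard references \cite{Cohen}, where these facts are proved, as the paper itself does.
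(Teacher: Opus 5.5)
The paper gives no proof of this corollary; it defers to \cite{Cohen}. Your argument therefore has to stand on its own, and it contains two genuine errors.

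\textbf{Case 1 and the missing alternative.} You claim that \(x\) is collinear with at most one point of \(L\), since otherwise \(x,p,p'\) ``would give a triangle''. Metasymplectic spaces have planes, so triangles abound: if \(\pi\) is a plane through \(L\) and \(x\in\pi\setminus L\), then \(x\) is collinear with every point of \(L\). By Lemma~\ref{Lem:PS} (or by (PS3) if \(x\) lies in a symp through \(L\)), the points of \(L\) collinear with \(x\) form the empty set, a single point, or all of \(L\). The last case, \(x\) and \(L\) spanning a plane, fits none of the listed alternatives. So the statement as printed is incomplete, and you should add this alternative rather than exclude it with a false lemma. Once it is separated off, your Case~1(a)/(b) is fine. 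The convexity used in (b) follows from Lemma~\ref{Lem:PS}: a point outside a symp that is collinear with two of its points forces those two points to be collinear.

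\textbf{The far case.} If \(x\) is far from \(\Sigma\supseteq L\) and the point \(y\) of Lemma~\ref{Lem:PS} lies on \(L\), the other points of \(L\) are \emph{collinear} with \(y\). Hence they are special to \(x\), not opposite, and this subcase gives the fourth alternative, not the sixth. You still need a point collinear with \(x\) and with all of \(L\). For instance, in the residue of \(y\), \(L\) is coplanar with a unique line \(N\subseteq\sympThrough{x}{y}\) through \(y\), and the point of \(N\) collinear with \(x\) works.

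The sixth alternative occurs exactly when \(y\notin L\) and \(y\) is collinear with a single point \(q_0\in L\). Then \(q_0\) is special to \(x\), and the other points, being symplectic to \(y\), are opposite \(x\). If \(y\notin L\) is collinear with all of \(L\), you get the fifth alternative, with \(M\) still to be constructed.

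Your ``reduction'' by switching symps cannot work in the sixth-alternative configuration. There \(x\) is far from every symp through \(L\), and the corresponding point \(y\) never lies on \(L\). The case ``\(y\) not collinear with \(L\)'' does not exist, by (PS3) in \(\Sigma\). Independence of \(\Sigma\) is also a non-issue: one application of Lemma~\ref{Lem:PS} to any symp through \(L\) already fixes the relation of \(x\) to each point of \(L\).

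\textbf{Smaller gaps in the close case.} \(L\subseteq K^\perp\) is impossible simply because two disjoint, mutually collinear lines would span a singular \(3\)-space in the rank~\(3\) polar space \(\Sigma\). Two further steps need a short (PS3) argument that you omit. When \(K^\perp\cap L=\{q_0\}\), you must show the point \(y\in K\) collinear with all of \(L\) exists. When \(K^\perp\cap L=\emptyset\), you must show \(q\mapsto\specPoint{x}{q}\) is injective.
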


\begin{corollary}[\bf Plane-Symp Relation]\label{Cor:PlaneSymp}
	Let \(\Sigma\) be a symp and let \(\pi\) be a plane of a metasymplectic space. Then one of the following is true:
	\begin{itemize}
		\item \(\pi\) is in \(\Sigma\).
		\item One symp through \(\pi\) is adjacent to \(\Sigma\), the rest are symplectic to \(\Sigma\).
		\item One symp through \(\pi\) is adjacent to \(\Sigma\), the rest are special to \(\Sigma\).
		\item One symp through \(\pi\) is symplectic to \(\Sigma\), the rest are special to \(\Sigma\).
		\item All symps through \(\pi\) are special to \(\Sigma\).
		\item One symp through \(\pi\) is special to \(\Sigma\), the rest are opposite to \(\Sigma\).
	\end{itemize}
\end{corollary}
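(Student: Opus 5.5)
The statement to prove is Corollary~\ref{Cor:PlaneSymp}, the Plane-Symp Relation. I would obtain it by dualising Corollary~\ref{Cor:PointLine}, using Corollary~\ref{Cor:dualMS}.

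Passing from \(\MS\) to its dual \(\MS^D\) turns symps into points and planes into lines. Incidence is preserved, so "\(\pi\) is in \(\Sigma\)" becomes "the point \(\Sigma\) is on the line \(\pi\)". The symps through \(\pi\) become the points on the line \(\pi\) of \(\MS^D\). Applying Corollary~\ref{Cor:PointLine} in \(\MS^D\) to the point \(\Sigma\) and the line \(\pi\) then gives six cases. In each, the points on the line are classified as collinear, symplectic, special or opposite to \(\Sigma\) in \(\MS^D\). So it suffices to translate the point-point relations of \(\MS^D\) into the symp-symp relations of \(\MS\) given by Lemma~\ref{Lem:SympSymp}.

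\textbf{Collinear.} Two points of \(\MS^D\) are collinear iff some line of \(\MS^D\) is incident with both. Dually, this means a plane of \(\MS\) lies in both symps, i.e.\ the symps are adjacent.

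\textbf{Symplectic.} Two points of \(\MS^D\) are symplectic iff some symp of \(\MS^D\) contains both. Dually, a point of \(\MS\) lies in both symps. That point is unique by (MS3), since two symps meeting in more than a point meet in a line or a plane. A common line would give a common plane by the polar-space structure of the residue, making the symps adjacent rather than symplectic; the trichotomy of Lemma~\ref{Lem:PP} is exactly mirrored by that of Lemma~\ref{Lem:SympSymp}.

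\textbf{Special.} Two points of \(\MS^D\) are special iff there is a unique point collinear to both. Dually, there is a unique symp adjacent to both, which matches the special case of Lemma~\ref{Lem:SympSymp}.

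\textbf{Opposite.} The opposite relation matches in the same way: no point of \(\MS^D\) is collinear to both iff no symp of \(\MS\) is adjacent to both.

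Substituting these translations into the six cases of Corollary~\ref{Cor:PointLine} yields the six cases listed in the statement, in the same order. The extra clauses about auxiliary points and lines in Corollary~\ref{Cor:PointLine} are not needed, since the statement omits them.

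The only step needing care is that the four relations of Lemma~\ref{Lem:PP}, read in \(\MS^D\), coincide with the four relations of Lemma~\ref{Lem:SympSymp} in \(\MS\). In particular, "collinear" in \(\MS^D\) must correspond to sharing exactly one plane, rather than sharing a plane possibly among others. I would settle this using (MS3): two distinct symps meet in at most a plane, so two points of \(\MS^D\) lie on at most one common line. This agrees with the uniqueness asserted in Lemma~\ref{Lem:PP}.
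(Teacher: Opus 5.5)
Your proposal is correct and matches what the paper intends. The paper gives no separate proof and defers these basic facts to Cohen, but the statement is exactly the dual of Corollary~\ref{Cor:PointLine} via Corollary~\ref{Cor:dualMS}, with Lemma~\ref{Lem:SympSymp} being the dual reading of Lemma~\ref{Lem:PP}. Your extra uniqueness checks via (MS3) and the point residue are harmless but unnecessary: applying Lemma~\ref{Lem:PP} in the dual space already gives the four relations of Lemma~\ref{Lem:SympSymp} word for word. In particular, ``symplectic'' there means a \emph{unique} common symp of the dual, i.e.\ a unique common point of \(\MS\), not merely some common one as your symplectic paragraph first states.
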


\begin{lemma}\label{Lem:SSOpp}
	Let \(x\) and \(y\) be two distinct points of a metasymplectic space \(\MS\).
	Then \(x\) and \(y\) are opposite if, and only if, there exist points \(x'\) and \(y'\) such that \(x\) and \(y'\) are special with \(\specPoint{x}{y'} = x'\), and \(y\) and \(x'\) are special with \(\specPoint{y}{x'} = y'\).
	Also, if $x\pperp z\perp y$, then $x$ and $y$ are not opposite.
\end{lemma}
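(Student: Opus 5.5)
We must prove Lemma~\ref{Lem:SSOpp}: \(x\opp y\) if and only if there are points \(x',y'\) with \(\specPoint{x}{y'}=x'\) and \(\specPoint{y}{x'}=y'\); and moreover \(x\pperp z\col y\) forces \(x\) and \(y\) not to be opposite. I would prove the second statement first, since it feeds into the "if" direction of the first.

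\emph{Second statement.} Suppose \(x\pperp z\col y\), and let \(\Sigma=\sympThrough{x}{z}\). If \(y\in\Sigma\), then \(x\) and \(y\) are in a common symp, hence not opposite. If \(y\notin\Sigma\), apply Lemma~\ref{Lem:PS} to \(y\) and \(\Sigma\).
\begin{itemize}
\item If \(y\) is close to \(\Sigma\), then every point of \(\Sigma\) is collinear, symplectic or special to \(y\). In particular \(x\) is not opposite \(y\).
\item If \(y\) is far from \(\Sigma\), then \(z\) is the unique point of \(\Sigma\) collinear to \(y\), so \(z\) is the point "\(y\)" of that lemma. Every other point of \(\Sigma\) is then either special to \(y\) and collinear to \(z\), or opposite to \(y\) and symplectic to \(z\). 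Since \(x\pperp z\), the point \(x\) falls in the second case and is opposite \(y\).
\end{itemize}
So the far case must be excluded. Here I would switch to the residue at \(z\), which is a dual polar space of rank \(3\) by (MS2). In it, the line \(zy\) is a point and \(\Sigma\) is a "line" object (a maximal singular subspace of the dual picture). The claim is that a line of a symp through \(z\) and the symp \(\Sigma\) through \(z\) are always at distance \(\le 2\) in this residue, so some symp through \(z\) contains both \(x\) and \(y\), or at least a common neighbour exists. I expect this step to be the main obstacle. If the residue argument does not go through cleanly, I would fall back on the standard consequence from \cite{Cohen} that points at distance \(2\) in the collinearity graph are never opposite, since opposite points are exactly those at distance \(3\).

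\emph{"Only if" direction.} Let \(x\opp y\). Take any line \(L\) through \(y\) and apply Corollary~\ref{Cor:PointLine} to \(x\) and \(L\). Since \(y\in L\) is opposite \(x\), only the last case can occur: there is a unique point \(y'\in L\) special to \(x\). Set \(x'=\specPoint{x}{y'}\). Then \(y\col y'\col x'\), and \(y\) is neither equal nor collinear to \(x'\), as otherwise \(x\) and \(y\) would have a common neighbour.
\begin{itemize}
\item If \(y\pperp x'\), then \(y\pperp x'\col x\), and the second statement gives that \(y\) is not opposite \(x\), a contradiction.
\item Hence \(y\special x'\) with common neighbour \(y'\), so \(\specPoint{y}{x'}=y'\), as required.
\end{itemize}

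\emph{"If" direction.} Given such \(x',y'\), suppose \(x\) and \(y\) are not opposite. Then \(x\) and \(y\) are at distance at most \(2\) in the collinearity graph, while the path \(x\col x'\col y'\col y\) has length \(3\) and is geodesic in pairs: \(x\) and \(y'\) are at distance \(2\), as are \(x'\) and \(y\). The case analysis would run as follows.
\begin{itemize}
\item If \(x\pperp y\), then inside \(\sympThrough{x}{y}\) I would locate the points \(x'\) and \(y'\) and use the special relations to reach a contradiction.
\item Alternatively, apply Corollary~\ref{Cor:PointLine} to \(x\) and the line \(y'y\). Since \(y'\) is special to \(x\) with \(\specPoint{x}{y'}=x'\), the options are limited, and I would rule out each one except "the rest opposite". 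For instance, the fourth case would yield a point collinear to \(x\) and to \(y'\) other than \(x'\), contradicting uniqueness; the fifth would force \(\specPoint{x}{y}\) to lie on a line \(M\) through \(x'\), and then \(y\) would be close to \(x'\), contradicting \(y\special x'\).
\end{itemize}
This yields \(x\opp y\).
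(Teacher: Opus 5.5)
\textbf{Gap: the second statement is not proved, and the ``only if'' direction depends on it.} Your ``if'' direction, via Corollary~\ref{Cor:PointLine} applied to $x$ and $yy'$, is sound:
\begin{enumerate}
\item Cases one and two contradict $x\special y'$.
\item In cases three and four, the point collinear to $x$ (on $yy'$, respectively collinear with all of $yy'$) is a common neighbour of $x$ and $y'$. Hence it equals $x'=\specPoint{x}{y'}$, which forces $x'\col y$, a contradiction.
\item In case five, $\specPoint{x}{y}$ and $y'$ are two distinct common neighbours of $y$ and $x'$, again a contradiction.
\end{enumerate}
Your ``only if'' direction is correct \emph{given} the second statement. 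The problem is that the second statement is where all the content lies, and your proposal does not establish it.

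In the far case you read Lemma~\ref{Lem:PS} as allowing the distinguished point of $\Sigma$ to be collinear with $y$. In fact it is symplectic to $y$, and a far point has no neighbour in $\Sigma$ at all. But that is exactly what must be proved, and neither of your two routes proves it.
\begin{enumerate}
\item \emph{The residue sketch swaps the types.} In the dual polar space of (MS2) at $z$, the symp $\Sigma$ is a quad, i.e.\ a \emph{point} of the underlying rank~$3$ polar space $P_z$. The line $zy$ is a \emph{plane} of $P_z$, not the other way round. Moreover, the conclusion you aim for, that some symp through $z$ contains $x$ and $y$, is false whenever $y\notin\Sigma$: then $x$ and $y$ turn out to be special.
\item \emph{The fallback is circular.} From $x\pperp z\col y$ you only get distance at most $3$ between $x$ and $y$ in the collinearity graph. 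So ``distance $2$ implies not opposite'' cannot be invoked; establishing distance at most $2$ is the whole point.
\end{enumerate}

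\textbf{How to close the gap.} Let $s$ and $G$ be the point and plane of $P_z$ corresponding to $\Sigma$ and $zy$.
\begin{enumerate}
\item If $s\in G$, then $y\in\Sigma$ and $x,y$ are not opposite.
\item Otherwise $s^\perp\cap G$ is a line $\ell$ of $P_z$. The plane $\langle s,\ell\rangle$ of $P_z$ corresponds to a line $M\subseteq\Sigma$ through $z$. The line $\ell$ corresponds to a plane of $\MS$ containing both $zy$ and $M$, so $y$ is collinear with every point of $M$.
\item Since $x\pperp z$, axiom (PS3) in the polar space $\Sigma$ gives a unique point $w\in M\setminus\{z\}$ with $x\col w$. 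Then $x\col w\col y$, so $x$ and $y$ are not opposite.
\end{enumerate}
This also shows that a point outside $\Sigma$ that is collinear with some point of $\Sigma$ is close to $\Sigma$, so your far case never arises. With this in hand, your ``only if'' argument goes through unchanged.
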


\begin{corollary}\label{Cor:OpLinesOpPoints}
	Let \(\Sigma\) be a symp of a metasymplectic space \(\MS\) and let \(x\) and \(y\) be two points which are close to \(\Sigma\).
	Let \(L_x\) be the line through all points of \(\Sigma\) collinear to \(x\) and let \(L_y\) be the line through all points of \(\Sigma\) collinear to \(y\).
	The points \(x\) and \(y\) are opposite if and only if the lines \(L_x\) and \(L_y\) are opposite in \(\Sigma\).
\end{corollary}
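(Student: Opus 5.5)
Let \(x,y\) be close to \(\Sigma\), with \(L_x\) and \(L_y\) their respective lines in \(\Sigma\). The plan is to reduce opposition of \(x\) and \(y\) to the criterion in Lemma~\ref{Lem:SSOpp}, using Lemma~\ref{Lem:PS} to describe every point of \(\Sigma\) as seen from \(x\) and from \(y\). We also use standard polar-space facts in \(\Sigma\). Two lines of the rank-3 polar space \(\Sigma\) are \emph{opposite} when no point of one is collinear with all points of the other. Equivalently, each point of \(L_x\) is collinear with a unique point of \(L_y\), and distinct points correspond to distinct points.

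\textbf{(\(\Leftarrow\)) Suppose \(L_x\) and \(L_y\) are opposite in \(\Sigma\).}
\begin{enumerate}
\item Pick \(x'\in L_x\) and let \(y'\) be the unique point of \(L_y\) collinear with \(x'\).
\item By Lemma~\ref{Lem:PS} applied to \(x\), the point \(y'\in\Sigma\setminus L_x\) is collinear with a unique point of \(L_x\), namely \(x'\). Hence \(y'\) is not collinear with all of \(L_x\), so \(y'\) is special to \(x\) with \(\specPoint{x}{y'}=x'\). (The common neighbour \(x'\) is unique because \(x\) and \(y'\) are special.)
\item The same argument with \(y\) in place of \(x\) shows that \(x'\) is special to \(y\) with \(\specPoint{y}{x'}=y'\).
\item Lemma~\ref{Lem:SSOpp} now gives \(x\opp y\).
\end{enumerate}

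\textbf{(\(\Rightarrow\)) Suppose \(L_x\) and \(L_y\) are not opposite.} Then some point \(p\) of one line is collinear with every point of the other; say \(p\in L_y\) is collinear with all of \(L_x\). (If \(L_x=L_y\) or the lines meet, pick \(p\) accordingly; this is still covered.)
\begin{enumerate}
\item If \(p\in L_x\), then \(x\perp p\perp y\), so \(x\) and \(y\) have a common neighbour and are not opposite.
\item Otherwise \(p\in\Sigma\setminus L_x\) is collinear with all of \(L_x\). By Lemma~\ref{Lem:PS} this means \(p\) is symplectic to \(x\). Since also \(p\perp y\), we have \(x\pperp p\perp y\), and the second part of Lemma~\ref{Lem:SSOpp} gives that \(x\) and \(y\) are not opposite.
\item The symmetric case, where \(p\in L_x\) is collinear with all of \(L_y\), is identical with \(x\) and \(y\) swapped.
\end{enumerate}

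The main point requiring care is the polar-space characterisation of non-opposite lines: in \(\Sigma\), either some point of one line is collinear with the whole other line, or the lines are opposite. This follows from (PS3) applied to each point of \(L_x\) against \(L_y\). If no point of either line is collinear with the whole other line, then each point of \(L_x\) has a unique neighbour on \(L_y\). Distinct points \(a,b\in L_x\) must have distinct neighbours, since a shared neighbour on \(L_y\) would then be collinear with both \(a\) and \(b\), hence with all of \(L_x\). This gives the bijection, so the lines are opposite.
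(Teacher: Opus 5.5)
Your proof is correct. The paper gives no proof of its own and defers these basic facts to \cite{Cohen}, but listing the statement as a corollary right after Lemma~\ref{Lem:SSOpp} points to exactly your route: the dichotomy of Lemma~\ref{Lem:PS} for points of \(\Sigma\setminus L_x\) (and of \(\Sigma\setminus L_y\)) feeds into the first part of Lemma~\ref{Lem:SSOpp} for \((\Leftarrow)\) and into its second part for \((\Rightarrow)\). One point of wording: in step~2 of \((\Leftarrow)\), the fact that \(x'\) is the unique neighbour of \(y'\) on \(L_x\) comes from \(L_x\) and \(L_y\) being opposite, not from Lemma~\ref{Lem:PS}; Lemma~\ref{Lem:PS} then forces \(y'\) to be special to \(x\).
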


\begin{lemma}\label{Lem:adjacentspecial}
Let \(\pi\) be a plane and let \(x\) and \(y\) be two points not in \(\pi\) such that \(x\) is collinear to a line \(L\) in \(\pi\)	and \(y\) is collinear to a line \(M\) in \(\pi\). Then, \(x\) and \(y\) are special if, and only if, \(L\) and \(M\) are different.
\end{lemma}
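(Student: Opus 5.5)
The plan is to pass to the symplecta through $\pi$ determined by $x$ and by $y$, and then read off the relation between $x$ and $y$ from Lemma~\ref{Lem:PS}. I read the hypothesis ``$x$ is collinear to a line $L$ in $\pi$'' as saying that $L$ is exactly the set of points of $\pi$ collinear to $x$. This is automatic: if $x$ were collinear to all of $\pi$, then $\langle x,\pi\rangle$ would be a singular $3$-space, and metasymplectic spaces have none. The same reading applies to $y$ and $M$.

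\emph{The case $L=M$.} Here $x$ and $y$ are both collinear to every point of $L$. If $x=y$, or if $x\col y$, they are not special. Otherwise $x$ and $y$ have at least two common neighbours, since every line has at least three points. Lemma~\ref{Lem:PP} then forces $x\sympl y$, so again they are not special.

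\emph{Constructing the symplecta.} Now let $L\neq M$ and put $p=L\cap M$. Pick $q\in M\setminus\{p\}$. Every point $r\in L$ is collinear to $q$, because $\pi$ is singular. So $x$ and $q$ are not collinear and have all of $L$ as common neighbours. By Lemma~\ref{Lem:PP} they are symplectic, and $\xi:=\sympThrough{x}{q}$ contains $L$ and $q$. Since the points of $\Sigma$ carry a polar space (MS1), $\xi$ is a subspace, and the plane of $\xi$ spanned by $L$ and $q$ is $\pi$. Hence $\xi$ contains $\pi$. In the same way we obtain a symp $\eta\ni y$ containing $\pi$.

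\emph{The case $\xi\neq\eta$.} By (MS3), $\xi\cap\eta=\pi$, so $y\notin\xi$. The points of $\xi$ collinear to $y$ include the line $M$. By Lemma~\ref{Lem:PS} they form exactly a line, so this line is $M$ and $y$ is close to $\xi$. The point $x\in\xi$ does not lie on $M$. Lemma~\ref{Lem:PS} then says $x$ is symplectic to $y$ exactly when $x$ is collinear to all of $M$, and otherwise $x$ is special to $y$. But $x$ is collinear to all of $M$ only if $M\subseteq x^\perp\cap\pi=L$, which fails. So $x\special y$.

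\emph{The main obstacle: the case $\xi=\eta$.} The hard step is ruling out $\xi=\eta$, that is, $x$ and $y$ lying in a common symp with $\pi$. I do not think this can be ruled out from the hypotheses as literally stated. Inside a single rank-$3$ polar space one can choose two points outside $\pi$ whose perps meet $\pi$ in distinct lines $L\neq M$. Such points are collinear or symplectic, not special. So the ``if'' direction genuinely needs $x$ and $y$ not to be in a common symp through $\pi$.

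I would first try to check whether this condition is implicit wherever the paper applies the lemma, for example because $x$ and $y$ are meant to lie in different symps through $\pi$. If so, I would add it explicitly to the statement. With that proviso, the argument in the previous paragraph completes the proof. The ``only if'' direction (special implies $L\neq M$) is just the contrapositive of the case $L=M$ and holds without any extra assumption.
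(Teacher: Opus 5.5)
Your analysis is correct, including the point you flag as the obstacle. The paper gives no proof of this lemma; it is listed among the basic facts whose proofs are deferred to \cite{Cohen}. As literally stated, the ``if'' direction is false. Take a symp $\Sigma\supseteq\pi$, planes $\pi_L\neq\pi$ and $\pi_M\neq\pi$ of $\Sigma$ through $L$ and $M$, and points $x\in\pi_L\setminus L$, $y\in\pi_M\setminus M$. Then $x^\perp\cap\pi=L\neq M=y^\perp\cap\pi$, yet $x,y\in\Sigma$ are collinear or symplectic, never special.

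The intended statement, as the label suggests, concerns two adjacent symps: $x\in\Sigma_1\setminus\pi$ and $y\in\Sigma_2\setminus\pi$ for distinct symps $\Sigma_1,\Sigma_2$ through $\pi$. This is exactly the situation at the only place the lemma is used, namely the proof that a dense embedding is symmetric. There the plane is $\alpha$, the points of $xy$ lie in $\Sigma\setminus\alpha$, and $a\in\Sigma'\setminus\alpha$ with $\Sigma\neq\Sigma'$. So nothing downstream is affected.

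With that hypothesis added, your argument is a complete proof. Your symp $\xi$ is the unique symp containing $x$ and $\pi$, because any such symp contains the symplectic pair $x,q$. So the extra hypothesis is precisely $\xi\neq\eta$, and then Lemma~\ref{Lem:PS} applied to $y$ and $\xi$ finishes the argument.

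The one step you leave implicit is $L\subseteq\sympThrough{x}{q}$, which is the convexity of symps. It follows from Lemma~\ref{Lem:PS}: the points of a symp collinear to an outside point form at most a line. Hence a point outside $\sympThrough{x}{q}$ cannot be collinear to both of the non-collinear points $x$ and $q$.
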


\begin{lemma}\label{Lem:specialplanes}
Let $\Sigma_1$ and $\Sigma_2$ be two special symps, and let $x_1\in\Sigma_1$ and  $x_2 \in \Sigma_2$ be symplectic, then at least one of $x_1,x_2$ is contained in the unique symp adjacent to both \(\Sigma_1\) and \(\Sigma_2\).
\end{lemma}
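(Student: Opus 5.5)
Throughout, let \(\Sigma_0\) be the unique symp adjacent to both \(\Sigma_1\) and \(\Sigma_2\) (Lemma~\ref{Lem:SympSymp}), and let \(\pi_i=\Sigma_i\cap\Sigma_0\) be the corresponding planes. I argue by contradiction: assume \(x_1\notin\Sigma_0\), \(x_2\notin\Sigma_0\), and \(x_1\pperp x_2\). The goal is to produce a symp adjacent to both \(\Sigma_1\) and \(\Sigma_2\) other than \(\Sigma_0\), or otherwise contradict the structure of special symp pairs.

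\textbf{Step 1 (basic structure).} First record that \(\pi_1\cap\pi_2=\emptyset\). If they met in a point \(p\), then \(\Sigma_1\cap\Sigma_2\) would be non-empty and the two symps would be adjacent or symplectic, not special. In the residue of a point of \(\pi_1\), a dual polar space, the symps \(\Sigma_0\) and \(\Sigma_1\) correspond to points and \(\pi_1\) to a line. Since \(\pi_1\) is a maximal singular subspace of the rank-3 polar space \(\Sigma_0\) and \(\pi_1\cap\pi_2=\emptyset\), the planes \(\pi_1,\pi_2\) are opposite in \(\Sigma_0\). Hence every point of \(\pi_1\) is collinear to exactly a line of \(\pi_2\), and vice versa.

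\textbf{Step 2 (locate \(x_1,x_2\) relative to \(\Sigma_0\)).} Since \(x_1\in\Sigma_1\setminus\pi_1\), the point \(x_1\) is collinear to exactly a line \(L_1\subseteq\pi_1\), namely its perp in \(\pi_1\) inside the polar space \(\Sigma_1\). Being outside \(\Sigma_0\) and collinear to a line of \(\Sigma_0\), it is close to \(\Sigma_0\) with close line \(L_1\) (Lemma~\ref{Lem:PS}). Similarly, \(x_2\) is close to \(\Sigma_0\) with close line \(L_2\subseteq\pi_2\). Because \(\pi_1,\pi_2\) are opposite in \(\Sigma_0\), the lines \(L_1\) and \(L_2\) are either opposite in \(\Sigma_0\) or share a collinear point relation. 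The key case split follows.

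(a) If \(L_1,L_2\) are opposite in \(\Sigma_0\), Corollary~\ref{Cor:OpLinesOpPoints} gives \(x_1\opp x_2\), contradicting symplecticity.

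(b) Otherwise, some point \(p\in L_1\) is collinear to all of \(L_2\), or \(L_1\) and \(L_2\) meet. Meeting is impossible because \(\pi_1\cap\pi_2=\emptyset\). I then want to show \(x_1,x_2\) are special. Take \(p\in L_1\) with \(p^\perp\cap\pi_2\supseteq L_2\). Then \(x_1\perp p\) and \(p\perp L_2\), and \(x_2\perp L_2\). The plane \(\langle p,L_2\rangle\) in \(\Sigma_0\) is collinear to \(x_2\) along \(L_2\) and to \(x_1\) along at most the point \(p\). I would try to apply Lemma~\ref{Lem:adjacentspecial} inside \(\Sigma_0\)-planes, or the point-line relation Corollary~\ref{Cor:PointLine} for \(x_1\) versus \(L_2\), to conclude \(x_1\special x_2\) with \([x_1,x_2]=p\) or a point of \(L_2\). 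This contradicts \(x_1\pperp x_2\).

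\textbf{Main obstacle.} The hardest step is case (b): controlling the distance between two points close to the same symp when their close lines are not opposite. The analogue of Corollary~\ref{Cor:OpLinesOpPoints} needed here is that non-opposite, non-intersecting close lines force a special pair, provided the points are not in a common symp with \(\Sigma_0\)-adjacency. I would first attempt it as follows. Suppose \(x_1\pperp x_2\) with symp \(\Sigma=\sympThrough{x_1}{x_2}\). Then \(\Sigma\) contains \(x_1,x_2\) and the common neighbours. Using the point-symp relation for \(p\) and \(\Sigma\), the symp \(\Sigma\) would meet \(\Sigma_0\) in a plane containing points of \(L_1\) and \(L_2\), making \(\Sigma\) adjacent to \(\Sigma_1\) and \(\Sigma_2\). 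This contradicts the uniqueness of \(\Sigma_0\) unless \(\Sigma=\Sigma_0\), and \(\Sigma=\Sigma_0\) would put \(x_1\in\Sigma_0\), again a contradiction.
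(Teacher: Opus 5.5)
The paper gives no proof of this lemma; it quotes it from Cohen's work along with the other basic facts of Section~\ref{Sec:Background}. So your argument has to stand on its own.

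\textbf{What is correct.} Step~1, Step~2 and case~(a) are fine. Since $\pi_1\cap\pi_2=\emptyset$, the planes $\pi_1,\pi_2$ are opposite in $\Sigma_0$. Each $x_i$ is close to $\Sigma_0$ with close line $L_i\subseteq\pi_i$, and opposite close lines give $x_1\opp x_2$ by Corollary~\ref{Cor:OpLinesOpPoints}.

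\textbf{The gap is case~(b)}, which is where the real content of the lemma lies.
\begin{itemize}
\item Your first suggestion cannot work as you describe it. Since $x_1^\perp\cap\Sigma_0=L_1$, $x_2^\perp\cap\Sigma_0=L_2$ and $L_1\cap L_2=\emptyset$, no point of $\Sigma_0$ is collinear to both $x_1$ and $x_2$. Hence a centre $\specPoint{x_1}{x_2}$ can be neither $p$ nor a point of $L_2$.
\item Lemma~\ref{Lem:adjacentspecial} does not apply to $\langle p,L_2\rangle$, because $x_1$ is collinear to only one point of that plane.
\item Corollary~\ref{Cor:PointLine} applied to $x_1$ and $L_2$ only describes how $x_1$ relates to points of $L_2$, not how it relates to $x_2$.
\item Your fallback paragraph has the right shape, but its decisive sentence (that $\sympThrough{x_1}{x_2}$ meets $\Sigma_0$ in a plane containing points of $L_1$ and $L_2$) is asserted without argument. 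In particular, you never determine how $p$ relates to $x_2$. That is exactly the input the point--symp relation needs: if $p$ were special to $x_2$, nothing would force $p$ into the symp.
\end{itemize}

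\textbf{How to close it.}
\begin{enumerate}
\item The point $p\in\Sigma_0\setminus L_2$ is collinear to all of $L_2$. Applying Lemma~\ref{Lem:PS} to $x_2$ and $\Sigma_0$ gives $p\sympl x_2$.
\item Suppose $x_1\sympl x_2$ and let $\Sigma=\sympThrough{x_1}{x_2}$. If $p\notin\Sigma$, then $p$ is close to $\Sigma$, since it is collinear to $x_1\in\Sigma$. By Lemma~\ref{Lem:PS}, the point $x_2\in\Sigma$, being symplectic to $p$, is then collinear to every point of the line $p^\perp\cap\Sigma$. That line contains $x_1$, contradicting $x_1\sympl x_2$. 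Hence $p\in\Sigma$.
\item It follows that $\Sigma=\sympThrough{p}{x_2}$. A symp contains all common neighbours of two of its non-collinear points (a point outside it collinear to both would be neither close nor far, by Lemma~\ref{Lem:PS}). So $L_2\subseteq p^\perp\cap x_2^\perp\subseteq\Sigma$.
\item Thus $\Sigma$ shares the line $x_1p$ with $\Sigma_1$, and the line $x_2u$ (for any $u\in L_2$) with $\Sigma_2$. It differs from both, because $\Sigma_1\cap\Sigma_2=\emptyset$. Two distinct symps sharing a line share a plane, so $\Sigma$ is adjacent to both $\Sigma_1$ and $\Sigma_2$.
\item Uniqueness of the common adjacent symp forces $\Sigma=\Sigma_0$, contradicting $x_1\notin\Sigma_0$.
\end{enumerate}
With this argument in place of your case~(b), the proof is complete.
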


As the symps are polar spaces of rank \(3\) we get the following property: 
\begin{lemma}
	Let \(x\) be a point and \(\Sigma\) be a symp containing \(x\), then the lines and planes through \(x\) in \(\Sigma\) form a generalised quadrangle.
\end{lemma}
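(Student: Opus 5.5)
The statement is that, for a point \(x\) and a symp \(\Sigma\ni x\), the lines and planes of \(\Sigma\) through \(x\) form a generalised quadrangle. I will prove it by passing to the local structure of the rank~\(3\) polar space \(\Sigma\) at \(x\), that is, its residue at \(x\). By (MS1), the \(\Sigma\)-shadow-geometry is a polar space of rank~\(3\) whose elements are the points, lines and planes of \(\MS\) contained in \(\Sigma\). So it suffices to show that in a polar space \(\Pi\) of rank~\(3\), the lines and planes through a point \(x\) form a generalised quadrangle. The incidence is symmetrised inclusion, inherited from the chambers of \(\MS\).

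\textbf{Step 1: incidence graph.} Consider the residue \(\mathrm{Res}(x)\), with the lines through \(x\) as ``points'' and the planes through \(x\) as ``lines''. I would use the standard fact (Tits/Buekenhout--Shult) that in a polar space satisfying (PS1)--(PS3), the subspaces generated by pairwise collinear points are projective spaces. Consequently two distinct lines \(L,M\) through \(x\) lie in at most one common plane, namely \(\langle L,M\rangle\), and they do so exactly when every point of \(L\) is collinear with every point of \(M\). It follows that the incidence graph of \(\mathrm{Res}(x)\) has no digons or quadrilaterals, i.e.\ girth at least \(6\).

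\textbf{Step 2: the quadrangle axiom.} This gives girth exactly \(8\) and diameter \(4\). Let \(L\) be a line through \(x\) and \(\pi\) a plane through \(x\) with \(L\not\subseteq\pi\). Pick \(p\in L\setminus\{x\}\). By (PS3) applied to \(p\) and the lines of \(\pi\), the set \(p^\perp\cap\pi\) is either all of \(\pi\) or a line of \(\pi\) through \(x\).
\begin{itemize}
\item If \(p^\perp\cap\pi=\pi\), then \(\langle\pi,p\rangle\) would be a singular subspace of projective dimension \(3\). This contradicts rank~\(3\) (maximal singular subspaces are planes), so this case cannot occur.
\item Hence \(p^\perp\cap\pi\) is a line \(M\ni x\), and \(\langle L,M\rangle\) is the unique plane through \(L\) meeting \(\pi\) in a line through \(x\).
\end{itemize}
We also need that this \(M\) does not depend on the choice of \(p\in L\setminus\{x\}\). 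If \(p,p'\in L\) gave different lines \(M\neq M'\), then every point of \(\pi\) would be collinear to a spanning set of \(L\), so \(L\cup\pi\) would be singular, again contradicting rank~\(3\).

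\textbf{Step 3: thickness and non-degeneracy.} The main obstacle is verifying that each element of \(\mathrm{Res}(x)\) is incident with at least three others.
\begin{itemize}
\item \emph{Each plane through \(x\) contains at least three lines through \(x\).} This follows from (PS1), since the plane is a projective plane with thick lines.
\item \emph{Each line \(L\) through \(x\) lies in at least three planes.} This needs the non-degeneracy axiom (PS2) together with rank~\(3\). First, \(L\) lies in some plane, since in a rank~\(3\) polar space every line lies in a maximal singular subspace (a plane); in this geometry this is already built in, because every element lies in a chamber. To get three planes I would use the polar-space fact that the residue \(L^\perp/L\) is a rank~\(1\) polar space, i.e.\ a set of at least two ``points'' containing no collinear pairs. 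So \(L\) lies in at least two planes \(\pi_1,\pi_2\).
\end{itemize}
A third plane is obtained via Step~2 from a line through \(x\) outside \(\pi_1\cup\pi_2\). Such a line exists by thickness of \(\pi_1\) combined with the projection of Step~2. Alternatively, I would simply cite the standard result that the residue of a point in a thick polar space of rank \(n\) is a thick polar space of rank \(n-1\). For \(n=3\) this residue is a generalised quadrangle, which is the cleanest route and the one I would present.
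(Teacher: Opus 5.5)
Your reduction to the point-residue of the rank~\(3\) polar space \(\Sigma\) matches the paper, which only remarks that the lemma holds ``as the symps are polar spaces of rank~\(3\)''. Your Steps~1--2 are fine, and so is the fact that every plane through \(x\) contains at least three lines through \(x\). Together they show that the lines and planes of \(\Sigma\) through \(x\) form a rank~\(2\) geometry with girth~\(8\), diameter~\(4\) and thick ``lines''.

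\textbf{The gap.} The other half of thickness is missing: every line \(L\ni x\) of \(\Sigma\) must lie in at least three planes of \(\Sigma\), since the paper's definition of a generalised quadrangle requires \(t+1\ge 3\). Your argument for this uses only the polar-space structure of \(\Sigma\), and no such argument can work. The hyperbolic (Klein) quadric \(Q^+(5,\K)\) satisfies (PS1)--(PS3) and has rank~\(3\), yet every line lies in exactly two planes. Its point-residues are therefore grids \(Q^+(3,\K)\), which pass your Steps~1--2 but have only two lines through each point. Concretely, in \(Q^+(5,\K)\) your ``third plane via Step~2'' never appears: every projection of \(L\) onto a plane through \(x\) returns \(\pi_1\) or \(\pi_2\). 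For the same reason your fallback citation is circular. The residue of a point of a polar space is a thick quadrangle only if ``thick'' already means that lines lie in at least three planes, and that is exactly the point at issue. The paper's one-line justification also leaves this implicit.

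\textbf{The fix.} The missing ingredient is (MS2), which you never use. The dual of the \(x\)-residue is a rank~\(3\) polar space whose points, lines and planes are the symps, planes and lines of \(\MS\) through \(x\). By (PS1) there, every plane through \(x\) lies in at least three symps. For a line \(L\ni x\) of \(\Sigma\), the symps and planes through \(L\) form a singular plane of that polar space, i.e.\ a projective plane; the paper itself uses this fact when proving that dense embeddings are symmetric. Its lines (the planes through \(L\)) carry at least three points (symps), so each of its points lies on at least three of its lines. Applied to the point \(\Sigma\), this says \(L\) is contained in at least three planes of \(\Sigma\).

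Equivalently, you may invoke that \(\MS\) arises from a thick building of type \(\mathsf{F_4}\), so \(\Sigma\) is a thick, non-hyperbolic polar space; after that your citation is legitimate. With this addition your argument proves the lemma.
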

We refer to this generalised quadrangle as the \(\{x,\Sigma\}\)-GQ.

\textbf{Standing Hypothesis and Conventions.}
We now consider a generalised octagon \(\OctFull\) which is fully embedded in a metasymplectic space \(\MSFull\), that is, the points and lines of \(\Oct\) are points and lines of the metasymplectic space, the incidence in \(\Oct\) is determined by the incidence in \(\MS\) and for any line in \(\LO\subseteq \LMS\) all points of \(\PMS\) on it are in \(\PO\).
We will use the following convention: \emph{collinear}, \emph{symplectic}, \emph{special} and \emph{opposite} are always in the metasymplectic space while \emph{distance} refers to distance between the points in the collinearity graph of the generalised octagon. We write \(d(x,y)\) for this distance between two points \(x\) and \(y\) of \(\Oct\).

\section{Pseudo-isometric embeddings}\label{Sec:PseudoIsoEmbedding}
\begin{definition}[\bf Pseudo-isometric Embedding]\label{Def:PseudoIso}
	A generalised octagon \(\Oct\) is \emph{pseudo-isometrically embedded} in a metasymplectic space \(\MS\) if it is fully embedded and two points of the generalised octagon are at
	\begin{itemize}
		\itemsep0em
		\item distance \(1\) if, and only if, they are collinear,
		\item distance \(2\) if, and only if, they are symplectic,
		\item distance \(3\) if, and only if, they are special,
		\item distance \(4\) if, and only if, they are opposite.   
	\end{itemize}
\end{definition}

\subparagraph{}
The goal of this section is to show the following:

\begin{theorem}\label{Thm:PseudoIsoEmbedding}
	Let \(\Oct\) be a generalised octagon fully embedded in a metasymplectic space \(\MS\).
	Then the embedding of \(\Oct\) in \(\MS\) is pseudo-isometric if, and only if,
	\begin{itemize}
		\item[\((S)\)] two points of \(\Oct\) are at distance \(2\) if, and only if, they are symplectic.
	\end{itemize}
\end{theorem}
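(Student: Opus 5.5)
The direction ``pseudo-isometric \(\Rightarrow\) \((S)\)'' is immediate from Definition~\ref{Def:PseudoIso}, so the plan concerns the converse. Assume \((S)\). Distances in the collinearity graph of \(\Oct\) take values \(0,\dots,4\), and Lemma~\ref{Lem:PP} says that two distinct points of \(\MS\) stand in exactly one of four relations. It is therefore enough to prove the forward implications: distance \(1\Rightarrow\) collinear, distance \(3\Rightarrow\) special, and distance \(4\Rightarrow\) opposite. Together with \((S)\), the fact that the two partitions match then gives all the ``if and only if'' statements. Distance \(1\Rightarrow\) collinear holds because the embedding is full.

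\emph{Distance \(3\).} Take a path \(x\sim a\sim b\sim y\) in \(\Oct\) and let \(L\) be the \(\Oct\)-line \(by\). By Corollary~\ref{Cor:UniqueClosest}, \(b\) is the unique point of \(L\) closest to \(x\). So \(d(x,b)=2\), and every other point of \(L\) is at distance \(3\) from \(x\). By \((S)\), \(b\) is the only point of \(L\) symplectic to \(x\). Now run through Corollary~\ref{Cor:PointLine}:
\begin{itemize}
\item \(x\in L\) is impossible.
\item ``One point collinear, the rest symplectic'' is impossible, since \(L\) has at least \(3\) points, so some point at distance \(3\) would be symplectic to \(x\).
\item ``One point collinear, the rest special'', ``all special'' and ``one special, the rest opposite'' all fail, since they leave no point of \(L\) symplectic to \(x\), whereas \(b\) is.
\end{itemize}
Only ``one point symplectic, the rest special'' remains, so \(y\) is special to \(x\).

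\emph{Distance \(4\).} Take a path \(x\sim a\sim b\sim c\sim y\) and the line \(L=cy\). By the previous step \(c\) is special to \(x\). Every other point of \(L\) is at distance \(4\) from \(x\), so by \((S)\) none of them is symplectic to \(x\). Corollary~\ref{Cor:PointLine} then leaves three cases:
\begin{itemize}
\item (i) one point of \(L\) collinear to \(x\), the rest special;
\item (ii) all points of \(L\) special to \(x\);
\item (iii) \(c\) special to \(x\), the rest opposite to \(x\), which is the desired case.
\end{itemize}
Since \(y\) was an arbitrary point at distance \(4\), it suffices to exclude (i) and (ii). By the symmetric argument applied to the line \(xa\) seen from \(y\), the same trichotomy holds there.

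\emph{Excluding (i).} Suppose \(y'\in L\) is collinear to \(x\). Then \(y'\perp x\), \(y'\perp c\), and \(x\bowtie c\), which forces \(\specPoint{x}{c}=y'\). I would compare this with the \(\Oct\)-path from \(x\) through \(a\) and \(b\) to \(c\). Since \(x\pperp b\) and \(b\perp c\), I intend to use Lemma~\ref{Lem:PS} with the symp \(\sympThrough{x}{b}\) and the point \(c\). That symp contains \(a\), and every other point of \(\sympThrough{x}{b}\) lying in \(\Oct\) is controlled by \((S)\) and the distance-\(3\) step. Identifying the line of \(\sympThrough{x}{b}\) collinear to \(c\) should place \(y'\) in a position that contradicts the octagon distances, because \(y'\) would then be symplectic to \(a\) or to \(b\) although \(d(a,y')\ge 3\).

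\emph{Excluding (ii).} This is the main obstacle. In case (ii) there is a line \(M\) of \(\MS\) carrying the points \(\specPoint{x}{z}\) for \(z\in L\). I would first try to show that \(M\) lies in \(\sympThrough{x}{b}\), or at least that \(c\) is close to that symp. Then, using Lemma~\ref{Lem:adjacentspecial} and Lemma~\ref{Lem:SSOpp} on a suitably chosen plane through \(c\), I would aim to force some point of \(L\) other than \(c\) to be symplectic to \(a\) or \(x\), contradicting \((S)\). If a single path does not suffice, I would exploit \(t\ge 2\): choose a second \(\Oct\)-line through \(c\) (or through \(b\)), producing more points at distance \(3\) and \(4\), and play the resulting special and symplectic relations against Lemma~\ref{Lem:specialplanes}.
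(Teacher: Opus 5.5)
Your reduction to the forward implications is sound. Your distance-$3$ argument, running through Corollary~\ref{Cor:PointLine} on the line $by$ where $b$ is the only point symplectic to $x$, is a valid alternative to the paper's Lemma~\ref{Lem:d3Special}. Your exclusion of case (i) can also be completed. By Lemma~\ref{Lem:PS}, $\specPoint{x}{c}$ is the point of the line of $\sympThrough{x}{b}$ collinear to $c$ that is collinear to $x$, so $y'\in\sympThrough{x}{b}$. Then $y'$ and $a$ lie in a common symp, so they are collinear or symplectic, although $d(a,y')=3$ makes them special. Note that the contradiction comes from $a$, not $b$, since $d(b,y')=2$. The paper handles (i) differently, via Lemma~\ref{Lem:SSOpp} in Lemma~\ref{Lem:dist4notcoll}.

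The genuine gap is case (ii). You do not exclude it; you only list things you might try, and none of them names a mechanism that produces a contradiction. This step is the heart of the theorem.

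Here is why a local attack along a geodesic $x\sim a\sim b\sim c\sim y$ is hard. Oppositeness propagates along lines. If $y_1\opp y_2$ and $y_3$ lies on a line through $y_2$ with $d(y_1,y_3)=4$, that line contains a point special to $y_1$ and a point opposite $y_1$. Corollary~\ref{Cor:PointLine} then forces all its other points to be opposite $y_1$. Contrapositively, non-oppositeness propagates too. So, by the connectivity Lemma~\ref{Lem:inductionOnEvenPoly}, either every distance-$4$ pair is opposite or none is. A tree-like configuration (your geodesic plus extra lines through $b$ or $c$) gives no evident way to tell these two global alternatives apart.

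The missing idea is to break this dichotomy with a closed configuration. The paper takes an ordinary octagon $x_0,\dots,x_7$ of $\Oct$ and analyses the relative position of $\sympThrough{x_1}{x_7}$ and $\sympThrough{x_3}{x_5}$:
\begin{itemize}
\item they are distinct and not adjacent;
\item they are symplectic only if $x_0\opp x_4$;
\item if they are special or opposite, some $\sympThrough{x_i}{x_{i+2}}$ and $\sympThrough{x_{i+2}}{x_{i+4}}$ are not adjacent, whence $x_i\opp x_{i+4}$.
\end{itemize}
This gives one opposite pair at distance $4$ (Lemma~\ref{Lem:ExitsOpposite}). The propagation step plus Lemma~\ref{Lem:inductionOnEvenPoly} then yields that every distance-$4$ pair is opposite. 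In this way case (ii) never has to be excluded directly. Without this global step, or a fully worked-out local contradiction, your proof that distance $4$ implies opposite is incomplete.
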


\subparagraph{}
By definition, any pseudo-isometric embedding satisfies Hypothesis~\((S)\).
We assume for the rest of this section that \(\Oct\) is a generalised octagon fully embedded in a metasymplectic space \(\MS\) and that this embedding satisfies Hypothesis~\((S)\).
We show that the embedding is pseudo-isometric.

\begin{lemma}\label{Lem:LinesNotPlanar}
	Let \(x\) be a point of \(\Oct\). Then, no two lines of \(\Oct\) through \(x\) are contained in a plane.
\end{lemma}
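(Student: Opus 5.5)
We argue by contradiction: assume two distinct lines \(L\) and \(M\) of \(\Oct\) through \(x\) both lie in a plane \(\pi\) of \(\MS\). We will produce two points of \(\Oct\) at distance \(2\) that are collinear rather than symplectic, which violates Hypothesis~\((S)\).

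Pick points \(y\in L\) and \(z\in M\), both different from \(x\). This is possible since every line has at least three points. In \(\Oct\), the path \(y,x,z\) has length \(2\). The points \(y\) and \(z\) are not collinear in \(\Oct\): otherwise \(x,y,z\) would form a triangle in the collinearity graph, giving a closed path of length \(6\) in the incidence graph, which contradicts girth \(16\). Hence \(d(y,z)=2\).

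On the other hand, \(y\) and \(z\) both lie in the plane \(\pi\). A plane of \(\MS\) is a plane of a rank~\(3\) polar space (a symp through \(\pi\)), hence a projective plane. So the line \(yz\) of \(\MS\) exists and \(y\col z\) in \(\MS\). By the trichotomy of Lemma~\ref{Lem:PP}, collinear points are not symplectic. This contradicts \((S)\), which demands that points at distance \(2\) be symplectic.

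The only point needing care is that the four relations in Lemma~\ref{Lem:PP} are mutually exclusive. This holds because symplectic points have no common line: the unique symp through them meets in a non-collinear pair. It also uses the fact that any two points of a plane are collinear, since planes are singular subspaces of a symp. Otherwise the argument is immediate, with no real obstacle.
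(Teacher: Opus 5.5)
Your proof is correct and takes the same approach as the paper. You pick $y\in L\setminus\{x\}$ and $z\in M\setminus\{x\}$, note that $(S)$ forces $y$ and $z$ to be symplectic, and observe that two points of a common plane are collinear; you also spell out details the paper leaves implicit, namely that $d(y,z)=2$ because $\Oct$ has no triangles, and that the relations of Lemma~\ref{Lem:PP} are mutually exclusive.
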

\begin{proof}
	Let \(L\) and \(M\) be two lines of \(\Oct\) through \(x\) and let \(y\) and \(z\) be points contained in \(L\setminus\{x\}\) and \(M\setminus\{x\}\), respectively. By Hypothesis~\((S)\), the points \(y\) and \(z\) are symplectic. Hence, the lines \(L\) and \(M\) can not be contained in a plane.
\end{proof}

\begin{lemma}\label{Lem:d3Special}
	Every two points of \(\Oct\) at distance \(3\) from each other are special.
\end{lemma}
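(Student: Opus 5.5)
Let \(x\) and \(y\) be points of \(\Oct\) at distance \(3\), and let \(x \perp u \perp v \perp y\) be the unique path of length \(3\) in \(\Oct\). Here \(xu\), \(uv\) and \(vy\) are lines of \(\Oct\), hence lines of \(\MS\). The plan is to first rule out that \(x\) and \(y\) are collinear or symplectic, and then to exhibit enough structure to force them to be special rather than opposite.

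First, non-collinearity and non-symplecticity. By Hypothesis~\((S)\), \(x\) and \(y\) are not symplectic, since they are not at distance \(2\). Suppose they were collinear in \(\MS\). Then \(x,u,v,y\) would satisfy \(x\perp y\) and \(x\pperp v\), the latter by \((S)\) since \(d(x,v)=2\). Now \(y\) lies on the line \(vy\), whose point \(v\) is symplectic to \(x\), while \(y\) itself is collinear to \(x\). By Corollary~\ref{Cor:PointLine}, all other points of \(vy\) are then symplectic to \(x\). However, every point \(w\) of \(vy\) other than \(v\) lies at distance \(3\) from \(x\) in \(\Oct\), because the octagon has girth \(16\) in the incidence graph. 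Taking such a \(w \neq y\) (lines have at least \(3\) points) gives \(w\pperp x\) with \(d(x,w)=3\), which contradicts \((S)\). Hence \(x\) and \(y\) are not collinear.

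Remaining cases. It remains to show that \(x\) and \(y\) are not opposite, and Lemma~\ref{Lem:SSOpp} does this directly. We have \(x\pperp v\) by \((S)\), since \(d(x,v)=2\), and \(v\perp y\). The second statement of Lemma~\ref{Lem:SSOpp} then says that \(x\) and \(y\) are not opposite. Since \(x\ne y\) and, by Lemma~\ref{Lem:PP}, exactly one of the four relations holds, \(x\) and \(y\) must be special.

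The only delicate step is the collinearity exclusion, where the girth of \(\Oct\) must be used to guarantee that the other points of \(vy\) are at distance exactly \(3\) from \(x\) and not closer. This holds because a shorter path would create a circuit of length less than \(16\) in the incidence graph.
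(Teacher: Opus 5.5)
Your proof is correct, but it is organised differently from the paper's.

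\textbf{The paper's route.} The paper uses the same auxiliary point $v$ (its $y'$) but works with the symp $\sympThrough{x}{v}$. It notes that $y\notin\sympThrough{x}{v}$. Since $y$ is collinear to $v$, it is then close to this symp. The close case of Lemma~\ref{Lem:PS} settles the relation in one step. Indeed, $x$ lies in the symp and is not on the line $L$ of points collinear to $y$. Moreover $x$ is not collinear to $v\in L$, so $x$ is special to $y$.

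\textbf{Your route.} You go through the four relations of Lemma~\ref{Lem:PP} and eliminate three of them separately. Symplectic is excluded by $(S)$. Collinear is excluded via Corollary~\ref{Cor:PointLine} together with the girth. Opposite is excluded by the second statement of Lemma~\ref{Lem:SSOpp}.

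\textbf{Comparison.} The paper's argument is shorter, since a single application of Lemma~\ref{Lem:PS} rules out collinearity and oppositeness at once. Yours makes explicit the one step the paper only asserts, namely that $y\notin\sympThrough{x}{v}$ ``by Hypothesis $(S)$''. If $y$ lay in that symp, it would be collinear or symplectic to $x$, hence collinear by $(S)$. Excluding that case needs essentially your argument: a further point $w\neq v$ of $vy$ is then symplectic to $x$ while $d(x,w)=3$. The two approaches rest on the same local geometry. The non-oppositeness clause of Lemma~\ref{Lem:SSOpp} that you use is itself a direct consequence of the point--symp relation applied to $\sympThrough{x}{v}$.
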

\begin{proof}
	Let \(x\) and \(y\) be two points of \(\Oct\) at distance 3 from each other.
	Let \(x'\) and \(y'\) be the points of \(\Oct\) such that \[d(x,x') = d(x',y') = d(y',y) = 1.\]
	By Hypothesis~\((S)\), the points \(x\) and \(y'\) are symplectic. 
	The point \(y\) is collinear to \(y'\) and hence close to \(\sympThrough{x}{y'}\) (it is not contained in it by Hypothesis $(S)$). Let \(L\) be the line through all the points of \(\sympThrough{x}{y'}\) which are collinear to \(y\).
	Since \(x\) is not collinear to \(y'\) on \(L\), the point \(x\) is collinear to a unique point on \(L\) and special to \(y\) (Lemma~\ref{Lem:PS}).
\end{proof}

\begin{lemma}\label{Lem:dist4notcoll}
	No pair of points of \(\Oct\) at distance $4$ from each other are collinear.
\end{lemma}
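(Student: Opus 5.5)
We argue by contradiction. Suppose \(x\) and \(y\) are points of \(\Oct\) with \(d(x,y)=4\) and \(x\col y\). Let \(x=x_0,x_1,x_2,x_3,x_4=y\) be the unique path of length \(4\) in \(\Oct\) from \(x\) to \(y\); uniqueness holds because the octagon has girth \(16\) in the incidence graph. Since \(\Oct\) is fully embedded and \(x,y\) are not collinear in \(\Oct\), the \(\MS\)-line \(xy\) is not a line of \(\Oct\). By Hypothesis~\((S)\), the point \(x_2\) is symplectic to both \(x\) and \(y\), and \(x\sympl x_2\) means \(x_2\notin xy\).

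\textbf{Step 1: control \(x_2\) against \(xy\).} Every point of \(xy\) is symplectic to \(x_2\) except possibly one. By Corollary~\ref{Cor:PointLine}, a line two of whose points are symplectic to \(x_2\) must contain a point collinear to \(x_2\), with all other points of the line symplectic to \(x_2\). Hence some point \(z\in xy\) is collinear to \(x_2\), and all of \(xy\) lies in a common symp with \(x_2\). More precisely, \(x\), \(y\) and \(x_2\) lie in the symp \(\Sigma=\sympThrough{x}{x_2}\), because \(z\in\Sigma\) gives \(xz=xy\subseteq\Sigma\). So \(x,x_1,x_2,x_3,y\) all lie in \(\Sigma\): the lines \(xx_1\) and \(x_1x_2\) lie in \(\Sigma\) since \(x_1\) is collinear with two non-collinear points of \(\Sigma\), and likewise for \(x_3\).

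\textbf{Step 2: derive a contradiction inside \(\Sigma\).} The lines \(x_1x\), \(x_1x_2\), \(x_3x_2\), \(x_3y\) are lines of \(\Oct\) contained in the polar space \(\Sigma\). Consider \(x_1\) and \(x_3\), which are at distance \(2\) in \(\Oct\) and hence symplectic (and not collinear) by \((S)\). In the polar space \(\Sigma\), the point \(y\) is collinear with \(x_3\) and with \(x\). The point \(x\) is collinear with \(x_1\), and \(d(x,x_3)=3\), so \(x\) and \(x_3\) are special by Lemma~\ref{Lem:d3Special}. But \(x\) and \(x_3\) are both in the symp \(\Sigma\), so they are collinear or symplectic, a contradiction. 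This already settles the claim, and the path via \(y\) is not even needed.

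\textbf{Main obstacle.} The delicate point is Step~1, namely showing that \(xy\) and \(x_2\) lie in a common symp. I would justify it through Corollary~\ref{Cor:PointLine}: the only case with at least two points symplectic to \(x_2\) is ``one point collinear, the rest symplectic''. In that case \(x_2\) and \(xy\) span a plane-like configuration inside a symp, namely \(\sympThrough{x}{x_2}\). If that route feels shaky, an alternative is to use the point-symp relation (Lemma~\ref{Lem:PS}) with \(y\) and \(\sympThrough{x}{x_2}\). The point \(y\) is collinear to \(x\in\Sigma\), so it is in \(\Sigma\) or close to \(\Sigma\). If \(y\) is close, then \(y\sympl x_2\) forces \(x_2\) to be collinear with the whole line \(L\) of points of \(\Sigma\) collinear to \(y\), and \(x\in L\). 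This contradicts \(x\sympl x_2\), so \(y\in\Sigma\), and the argument proceeds as above.
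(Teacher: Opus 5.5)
Your proof is correct, but it takes a different route from the paper's.

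\textbf{The two routes.} The paper works with the two inner vertices $x_1,x_3$ of the path. By Lemma~\ref{Lem:d3Special} one has $x\special x_3$ with $\specPoint{x}{x_3}=y$, and $y\special x_1$ with $\specPoint{y}{x_1}=x$. The opposition criterion of Lemma~\ref{Lem:SSOpp} then makes $x_1$ and $x_3$ opposite, which contradicts $d(x_1,x_3)=2$ together with $(S)$.

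You use the midpoint $x_2$ instead. Since $x_2$ is symplectic to two points of the line $xy$, it forces $xy\subseteq\Sigma=\sympThrough{x}{x_2}$. Convexity of symps then puts $x_3$ into $\Sigma$. So the pair $x,x_3$, at distance $3$, lies in a common symp, contradicting Lemma~\ref{Lem:d3Special}.

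The paper's argument is a two-line application of Lemma~\ref{Lem:SSOpp}. Yours dispenses with that opposition criterion and needs only the point--symp relation (Lemma~\ref{Lem:PS}). It also makes the geometric obstruction transparent: the whole path $x,x_1,x_2,x_3,y$ would be trapped inside a single symp.

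\textbf{Smaller points.}
\begin{enumerate}
\item In Step~1 you never say why $z\in\Sigma$. It is the same convexity fact you later use for $x_1$ and $x_3$: a point outside a symp that is collinear with points of it is close to it, so its neighbours there lie on one line and are pairwise collinear. Your ``alternative'' via Lemma~\ref{Lem:PS} is the cleaner justification and could replace the appeal to Corollary~\ref{Cor:PointLine}. That corollary's list, as printed, omits the case where all points of the line are collinear to the given point; this is harmless here, since $x\sympl x_2$.
\item In Step~2, the remarks about $x_1$ and $x_3$ being symplectic play no role.
\item Your comment that the path via $y$ is not needed is misleading: $y\in\Sigma$ is exactly what forces $x_3\in\Sigma$.
\end{enumerate}
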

\begin{proof}
	Assume for a contradiction that \(x\) and \(y\) are two points of \(\Oct\) at distance 4 from each other which are collinear.
	Let \(x'\) and \(y'\) be the unique points of \(\Oct\) such that \(d(x,x')=d(y,y')=1\) and \(d(x,y')=d(y,x')=3\).
	By Lemma~\ref{Lem:d3Special}, the points \(x\) and \(y'\) are special and similarly \(y\) and \(x'\) are special. We now see that \(\specPoint{x}{y'} = y\) and \(\specPoint{x}{y'} = x\). By Lemma~\ref{Lem:SSOpp} this means that \(x'\) and \(y'\) are opposite but this contradicts \(d(x',y') = 2\).
\end{proof}

Before we can prove that each pair of points at mutual distance 4 is opposite, we first need to show that at least one pair of such points is opposite.
We consider an octagon of \(\Oct\), that is, eight points \(x_0, x_1, \dots, x_7\) of \(\Oct\) such that, \(d(x_0,x_{1})=d(x_1,x_2)=\dots=d(x_6,x_7)=d(x_7,x_0)=1\), and \(d(x_0,x_4) = 4 = d(x_2,x_6)\).
For ease of notation, we define \(x_{i}=x_{i-8}\) for \(8\le i \le 15\). 
We see that: \[d(x_i,x_j)=j-i \text{ for }i\in \{0,1,\dots,7\}, i\le j \le i+4.\]

\begin{lemma}\label{Lem:S17S35NotEq}
	\(\sympThrough{x_1}{x_{7}} \) and \(\sympThrough{x_{3}}{x_{5}}\) are different symps.
\end{lemma}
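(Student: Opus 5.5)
Suppose, for a contradiction, that \(\Sigma:=\sympThrough{x_1}{x_7}=\sympThrough{x_3}{x_5}\). The symplecta are well defined: \(d(x_1,x_7)=2=d(x_3,x_5)\), so by Hypothesis~\((S)\) each pair is symplectic. The plan is to find two points of \(\Oct\) in \(\Sigma\) at distance \(4\) from each other and derive a contradiction from that.

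Under the assumption, \(\Sigma\) contains \(x_1,x_3,x_5,x_7\). Consider \(x_1\) and \(x_5\). Their distance is \(d(x_1,x_5)=4\), since \(x_1,\dots,x_5\) is a path of length \(4\) in the octagon. Two distinct points of a symp are either collinear or symplectic, because the symp is a polar space of rank \(3\) and its collinearity agrees with that of \(\MS\). Lemma~\ref{Lem:dist4notcoll} rules out collinearity, since \(x_1\) and \(x_5\) are at distance \(4\). So \(x_1\) and \(x_5\) would be symplectic. By Hypothesis~\((S)\), symplectic points of \(\Oct\) are at distance exactly \(2\), which contradicts \(d(x_1,x_5)=4\). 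Hence the two symps are different.

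Only a few facts need checking, all routine. First, \(x_1\ne x_5\) and \(d(x_1,x_5)=4\); this is the displayed distance formula with \(i=1\), \(j=5\). Second, the points of a symp are pairwise collinear or symplectic; this is immediate from (MS1) and Lemma~\ref{Lem:PP}, and uniqueness of \(\sympThrough{x_1}{x_5}\) makes it equal to \(\Sigma\). Third, Hypothesis~\((S)\) is used in its "only if" direction. There is no real obstacle here; the only point needing care is that "contained in a common symp" forces collinear or symplectic in \(\MS\). The same argument works with the pair \(x_3,x_7\) in place of \(x_1,x_5\).
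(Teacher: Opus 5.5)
Your proof is correct, but it takes a different route from the paper. The paper's proof is the one-liner ``Follows from Lemma~\ref{Lem:d3Special}''. Read in context, it uses the convexity of symps: \(x_0\) is collinear to the symplectic points \(x_1\) and \(x_7\), so \(x_0\in\sympThrough{x_1}{x_7}\). If the two symps were equal, \(x_0\) and \(x_3\) would lie in a common symp. But they are at distance \(3\), hence special by Lemma~\ref{Lem:d3Special}, which is impossible for two points of one symp.

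You instead use only the points \(x_1,x_5\), which lie in both symps by definition. You then exclude separately the two relations possible inside a symp: collinearity by Lemma~\ref{Lem:dist4notcoll}, and being symplectic by the ``only if'' direction of Hypothesis~\((S)\).

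What your version buys is that it needs no convexity of symps. It needs only that lines of the polar space \(\Sigma\) are lines of \(\MS\). Then two points of \(\Sigma\) not collinear in \(\MS\) have many common neighbours in \(\Sigma\), and so are symplectic by Lemma~\ref{Lem:PP}; the other direction of ``collinearity agrees'' is not needed. The price is that you rely on Lemma~\ref{Lem:dist4notcoll}, which itself rests on Lemma~\ref{Lem:d3Special} and Lemma~\ref{Lem:SSOpp}, whereas the paper's argument is a single step from Lemma~\ref{Lem:d3Special}. Since Lemma~\ref{Lem:dist4notcoll} precedes this statement in the paper, your citation is legitimate.
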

\begin{proof}
	Follows from Lemma~\ref{Lem:d3Special}.
\end{proof}

\begin{lemma}\label{Lem:S17S35NotAdj}
	$\sympThrough{x_1}{x_{7}}$ and $\sympThrough{x_{3}}{x_{5}}$ are not adjacent.
\end{lemma}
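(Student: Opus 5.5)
The plan is to argue by contradiction. Write \(\Sigma_{17}=\sympThrough{x_1}{x_7}\) and \(\Sigma_{35}=\sympThrough{x_3}{x_5}\). Note that \(x_0\in\Sigma_{17}\) and \(x_4\in\Sigma_{35}\), since \(x_0\) is collinear to both \(x_1,x_7\) and \(x_4\) is collinear to both \(x_3,x_5\). Suppose the two symps are adjacent, sharing a plane \(\pi\).

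First I would record the basic consequence of adjacency. For a point \(u\in\Sigma_{17}\setminus\pi\), the set \(u^\perp\cap\pi\) is a line \(L_u\), because \(\Sigma_{17}\) is a polar space of rank \(3\) and \(u\notin\pi\). Likewise, for \(v\in\Sigma_{35}\setminus\pi\) we get a line \(M_v\). Lemma~\ref{Lem:adjacentspecial} then says \(u\) and \(v\) are special if, and only if, \(L_u\neq M_v\); otherwise they are symplectic or collinear. In particular, no point of \(\Sigma_{17}\) is opposite a point of \(\Sigma_{35}\), which is also consistent with Corollary~\ref{Cor:NoOppInAdj}.

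Next I would translate the octagon distances into \(\MS\)-relations using Hypothesis \((S)\) and Lemmas~\ref{Lem:d3Special} and~\ref{Lem:dist4notcoll}:
\begin{itemize}
\item the pairs \((x_1,x_3)\) and \((x_7,x_5)\) are at distance \(2\), hence symplectic;
\item the pairs \((x_0,x_3)\), \((x_0,x_5)\), \((x_1,x_4)\) and \((x_7,x_4)\) are at distance \(3\), hence special;
\item the pairs \((x_0,x_4)\), \((x_1,x_5)\) and \((x_7,x_3)\) are at distance \(4\), hence neither collinear nor symplectic (by \((S)\)), and therefore special, since no opposite pairs occur between the two symps.
\end{itemize}
By Lemma~\ref{Lem:LinesNotPlanar}, the lines \(x_0x_1\) and \(x_0x_7\) are not both in \(\pi\), and likewise for \(x_4x_3\) and \(x_4x_5\).

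The main case is when none of \(x_0,x_1,x_7,x_3,x_4,x_5\) lies in \(\pi\). There adjacency forces \(L_{x_1}=M_{x_3}\) and \(L_{x_7}=M_{x_5}\). The special pairs force \(L_{x_1}\neq M_{x_5}\) and \(L_{x_7}\neq M_{x_3}\), so \(L_{x_1}\neq L_{x_7}\). Moreover \(L_{x_0}\) is distinct from \(M_{x_3}\), \(M_{x_4}\) and \(M_{x_5}\), and \(M_{x_4}\) is distinct from \(L_{x_1}\) and \(L_{x_7}\).

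I would then use the polar geometry of \(\pi\) inside each symp. Collinear points \(u\perp u'\) off \(\pi\) have lines \(L_u,L_{u'}\) that meet, because the plane spanned by the line \(uu'\) together with \(u^\perp\cap u'^\perp\) meets \(\pi\). So \(L_{x_0}\) meets both \(L_{x_1}\) and \(L_{x_7}\), and \(M_{x_4}\) meets both \(M_{x_3}\) and \(M_{x_5}\).

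Let \(p\) be the intersection point of \(L_{x_1}\) and \(L_{x_7}\). These are two distinct lines in the plane \(\pi\), so they meet in exactly one point. Then \(p\) is collinear to \(x_1,x_7,x_3,x_5\), so \(p=\specPoint{x_1}{x_5}=\specPoint{x_3}{x_7}\). I expect the contradiction to come from this point \(p\). It lies in \(\sympThrough{x_1}{x_3}\), which contains \(x_2\), and similarly in \(\sympThrough{x_5}{x_7}\), which contains \(x_6\). Locating \(x_2\) and \(x_6\) relative to \(p\), via Lemma~\ref{Lem:PS} and Lemma~\ref{Lem:SSOpp}, should make \(x_2\) and \(x_6\) symplectic or collinear, contradicting \(d(x_2,x_6)=4\) together with \((S)\) and Lemma~\ref{Lem:dist4notcoll}. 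If that fails, I would instead look for a contradiction between the line arrangement \(L_{x_0},L_{x_1},L_{x_7},M_{x_4}\) in \(\pi\) and the requirement that \(x_0\) and \(x_4\) be special.

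The remaining cases, where some of the six points lie in \(\pi\), are handled similarly. A point of \(\pi\) is collinear or symplectic to every point of both symps. So if, say, \(x_0\in\pi\), then \(x_0\) is at most symplectic to \(x_4\), contradicting the fact that \(x_0\) and \(x_4\) must be special. The same argument applies to \(x_1\in\pi\) (using \(x_5\)), to \(x_7\in\pi\) (using \(x_3\)), and symmetrically on the other side.

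The main obstacle is the generic case: extracting a genuine contradiction from the arrangement of lines in \(\pi\). The argument through \(p\), \(x_2\) and \(x_6\) is what I would try first.
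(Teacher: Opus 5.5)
Your setup is sound: translating octagon distances into relations in $\MS$, the use of Lemma~\ref{Lem:adjacentspecial}, and the cases where one of the six vertices lies in $\pi$ are all fine. But the proof stops exactly where the contradiction has to be produced, so there is a genuine gap.

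In the generic case you only say that locating $x_2$ and $x_6$ relative to $p$ ``should'' make them collinear or symplectic. Nothing you have established forces this. For instance, nothing excludes $x_2\perp p\perp x_6$ inside $\{x_1,x_3\}^\perp$ and $\{x_5,x_7\}^\perp$. In that case $x_2$ and $x_6$ are merely special with $\specPoint{x_2}{x_6}=p$, which is not contradictory at this stage, because distance-$4$ pairs are not yet known to be opposite. Your fallback does not help either. The relations you extracted from the six vertices are $L_{x_1}=M_{x_3}\neq L_{x_7}=M_{x_5}$, with $L_{x_0}$ and $M_{x_4}$ different from these and from each other. As a pattern of lines in $\pi$ this is consistent, so no contradiction can be read off from it alone.

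The missing idea is to use all points of the lines of $\Oct$, not only the vertices.
\begin{enumerate}
\item The lines $x_0x_7$ and $x_3x_4$ are opposite in $\Oct$. So any two points, one on each, are at distance $3$ or $4$, and every point of $x_0x_7$ is at distance $3$ from some point of $x_3x_4\subseteq\sympThrough{x_3}{x_5}$.
\item By Lemma~\ref{Lem:d3Special}, no point of $x_0x_7$ lies in $\pi$, and symmetrically for $x_3x_4$. Since a line disjoint from a plane of a rank~$3$ polar space is collinear to exactly one point of that plane, there is a unique point $y\in\pi$ collinear to every point of $x_0x_7$. In your notation, $y=L_{x_0}\cap L_{x_7}$.
\item Since $\sympThrough{x_3}{x_5}$ is a polar space, $y$ is collinear to some point $y'\in x_3x_4$.
\item Apply Corollary~\ref{Cor:PointLine} to $y'$ and the line $x_0x_7$. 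Opposition is excluded because $y$ is a common neighbour. The case `all special' is excluded because all the points $\specPoint{y'}{z}$, $z\in x_0x_7$, would coincide with $y$. Hence some $z\in x_0x_7$ is collinear or symplectic to $y'$.
\item Since $d(y',z)\ge 3$, this contradicts Lemma~\ref{Lem:d3Special}, Lemma~\ref{Lem:dist4notcoll} and Hypothesis~$(S)$.
\end{enumerate}
This is essentially the paper's proof. In your language it says that the lines of $\pi$ through $y$ are exactly the lines $L_z$ with $z\in x_0x_7$. Hence $M_{y'}=L_z$ for some such $z$, and Lemma~\ref{Lem:adjacentspecial} then makes $y'$ and $z$ non-special, which is impossible.
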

\begin{proof}
	Suppose that these symps are adjacent and let \(\pi\) be the unique plane contained in both.
	Using Lemma~\ref{Lem:d3Special}, it is easy to see that \(\pi\) contains none of the points on $x_7x_0, x_0x_1, x_3x_4 $ or $ x_4x_5$. Hence there is a unique plane $\pi'$ containing $x_0x_7$ and a point $y$ of $\pi$. There is at least one point \(y'\) on $x_3x_4$ collinear to $y$. 
	This implies, by Corollary~\ref{Cor:PointLine}, that there is at least one point $z\in x_0x_7$ symplectic to $y'$. Note that, because the embedding of \(\Oct\) is full, \(y'\) and \(z\) are points of \(\Oct\). Since \(d(y',z)>2\), this contradicts Hypothesis~\((S)\). 
\end{proof}

\begin{lemma}\label{Lem:S17S35NotSympl}
	If $x_0$ and $x_4$ are not opposite, then $\sympThrough{x_1}{x_{7}}$ and $\sympThrough{x_{3}}{x_{5}}$ are not symplectic.
\end{lemma}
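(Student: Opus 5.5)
The plan is to prove the contrapositive. Assume that $\Sigma_1:=\sympThrough{x_1}{x_7}$ and $\Sigma_2:=\sympThrough{x_3}{x_5}$ are symplectic, meeting in a unique point $p$, and deduce that $x_0\opp x_4$.

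\emph{Locating $x_0$, $x_4$ and $p$.} The point $x_0$ is collinear to the two symplectic points $x_1$ and $x_7$. By convexity of symps (a common neighbour of two symplectic points lies in their symp), $x_0\in\Sigma_1$, and likewise $x_4\in\Sigma_2$. Next, $p\neq x_0$: otherwise $x_0$ and $x_3$ would both lie in $\Sigma_2$, but they are special by Lemma~\ref{Lem:d3Special}. Similarly $p\neq x_4$, and in fact $x_0\notin\Sigma_2$ and $x_4\notin\Sigma_1$. So inside the polar space $\Sigma_1$ the point $x_0$ is either collinear or symplectic to $p$, and the same holds for $x_4$ in $\Sigma_2$.

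\emph{The generic case.} Use the standard fact about two symplectic symps meeting in $p$: a point of $\Sigma_1$ symplectic to $p$ and a point of $\Sigma_2$ symplectic to $p$ are opposite. This can be derived via Lemma~\ref{Lem:PS}. A point $a\in\Sigma_1$ with $a\pperp p$ is far from $\Sigma_2$, since $a\notin\Sigma_2$ and the unique symp through $a$ meeting $\Sigma_2$ in a single point is $\Sigma_1$ itself. Hence the points of $\Sigma_2$ symplectic to $p$ are opposite to $a$. Consequently, if $x_0\pperp p$ and $x_4\pperp p$, then $x_0\opp x_4$ and we are done.

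\emph{Excluding collinearity with $p$ (main obstacle).} It remains to rule out $x_0\col p$, and symmetrically $x_4\col p$. Suppose $x_0\col p$. Then $x_0$ is collinear to a point of $\Sigma_2$ but not contained in it, so $x_0$ is close to $\Sigma_2$. Let $L\ni p$ be the line of points of $\Sigma_2$ collinear to $x_0$. By Lemma~\ref{Lem:PS}, every point of $\Sigma_2$ off $L$ is symplectic or special to $x_0$, and it is symplectic exactly when it is collinear to all of $L$.

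Now use the points of $\Oct$ in $\Sigma_2$. These are $x_3$ and $x_5$, which are special to $x_0$ (distance $3$, Lemma~\ref{Lem:d3Special}); $x_4$, at distance $4$ from $x_0$, hence not collinear to it by Lemma~\ref{Lem:dist4notcoll}; and all points of the lines $x_3x_4$ and $x_4x_5$, which lie in $\Sigma_2$ by convexity. Each point $z$ of these lines other than $x_4$ is at distance $3$ from $x_0$, so it is special to $x_0$. Therefore $z$ is not collinear to all of $L$, and it meets $L$ in at most one collinear point.

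On the other hand, $x_4$ is not on $L$, so in the rank~$3$ polar space $\Sigma_2$ it is collinear to at least one point $q$ of $L$. The planes through $x_4$ then force some point $z\neq x_4$ on $x_3x_4$ or on $x_4x_5$ to be collinear to $q$ and to another point of $L$, or they force $x_4$ itself to be symplectic to $x_0$. The first outcome contradicts $z$ being special to $x_0$. The second contradicts Hypothesis~$(S)$, since $d(x_0,x_4)=4\neq2$. The other possibility for $x_4$ relative to $L$ is handled in the same way, using Lemma~\ref{Lem:LinesNotPlanar} to ensure that $x_3x_4$ and $x_4x_5$ do not span a plane.

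I expect this polar-space bookkeeping inside $\Sigma_2$ to be the delicate point: one must verify in every configuration that some point of $\Oct$ acquires the wrong relation to $x_0$. If the direct argument stalls, I would instead mimic the proof of Lemma~\ref{Lem:S17S35NotAdj}, using Corollary~\ref{Cor:PointLine} to produce a point of $x_0x_1$ or $x_0x_7$ that is symplectic to a point of $x_3x_4$ or $x_4x_5$. Such a pair lies at distance greater than $2$, which contradicts Hypothesis~$(S)$.
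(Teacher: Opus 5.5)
Your reduction to the case \(x_0\col p\) and your generic case are correct, but the step excluding \(x_0\col p\) has a genuine gap.

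\textbf{A minor slip.} Only \(x_3\) and \(x_5\) are at distance \(3\) from \(x_0\); every other point of \(x_3x_4\cup x_4x_5\) is at distance \(4\). They are still special to \(x_0\): since \(x_0\) is close to \(\Sigma_2\), nothing in \(\Sigma_2\) is opposite \(x_0\), and Lemma~\ref{Lem:dist4notcoll} and \((S)\) exclude collinear and symplectic.

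\textbf{The actual gap.} Your claim that the planes through \(x_4\) force a contradiction inside \(\Sigma_2\) is false, because the data you use there are consistent. Take \(L=pq\) with \(x_4\col q\) and \(x_4\) not collinear to \(p\). Choose \(x_3,x_5\) in the quadrangle \(\{p,x_4\}^\perp\subseteq\Sigma_2\), non-collinear to \(q\) and to each other. Then:
\begin{itemize}
\item every point of \(x_3x_4\cup x_4x_5\) is collinear to exactly one point of \(L\);
\item the two lines are not coplanar;
\item no point of them other than \(x_4\) is collinear to \(q\);
\item \(x_4\) is special, not symplectic, to \(x_0\).
\end{itemize}
This configuration even satisfies \(x_3\col p\) and \(x_5\col p\), which are true in the real situation. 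So no argument confined to \(\Sigma_2\) and \(x_0\) can finish; you must bring in \(\Sigma_1\).

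\textbf{The missing idea.} Exploit the distance-\(2\) pairs straddling the two symps, \(x_1\sympl x_3\) and \(x_5\sympl x_7\).
\begin{itemize}
\item None of \(x_1,x_3,x_5,x_7\) equals \(p\); for example, \(x_1\) and \(x_5\) cannot lie in a common symp.
\item If \(x_3\sympl p\), then \(x_3\) is far from \(\Sigma_1\). By Lemma~\ref{Lem:PS}, every point of \(\Sigma_1\setminus\{p\}\) is then special or opposite to \(x_3\), contradicting \(x_1\sympl x_3\). Hence \(x_3\col p\), and likewise \(x_1,x_5,x_7\col p\).
\item Now suppose \(x_0\col p\). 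Then \(p\in\Sigma_1\) is collinear to every point of \(x_0x_7\) and also to \(x_3\).
\item Apply Corollary~\ref{Cor:PointLine} to \(x_3\) and the line \(x_0x_7\). No point of the line is opposite \(x_3\), since they share the neighbour \(p\). Collinearity is excluded by Lemmas~\ref{Lem:d3Special} and~\ref{Lem:dist4notcoll}. The all-special case is excluded, because every \(\specPoint{x_3}{z}\) would equal \(p\).
\item So some point of \(x_0x_7\) is symplectic to \(x_3\), contradicting \((S)\).
\end{itemize}

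This is the paper's proof. Your fallback points in this direction, but it omits these collinearities with \(p\), and they are exactly what makes Corollary~\ref{Cor:PointLine} applicable.
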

\begin{proof}
	Suppose, for a contradiction, that $x_0$ and $x_4$ are not opposite and that $\sympThrough{x_1}{x_{7}}$ and $\sympThrough{x_{3}}{x_{5}}$  are symplectic. Let $y$ be their intersection in $\MS$. It is easy to see that $y$ is not a point of $\Oct$ (otherwise $y$ is at distance~$2$ from both $x_0$ and $x_4$, implying by Hypothesis $(S)$ that is is also at distance $2$ from at least one point of $x_0x_1\setminus\{x_0\}$ and at least one point of $x_0x_7\setminus\{x_0\}$, clearly a contradiction). Since $x_1$ is symplectic to $x_3$ and $x_5$ is symplectic to $ x_7$, all of the points $x_1,x_3,x_5,x_7$ are collinear to $y$. If none of $x_0$ and $x_4$ were collinear to $y$, then they would be opposite, and the lemma would follow. Hence we may assume $x_0\col y$. Hence $y,x_0,x_7$ are contained in a plane $\pi$. As in the previous proof, by Corollary~\ref{Cor:PointLine}, there is at least one point on \(x_0x_7\) symplectic to \(x_3\), but no point of $x_0x_7$ is at distance 2 from any point of  $x_3x_4$, a contradiction. 
\end{proof}

\begin{lemma}\label{Lem:S17S35Spec}
	If $\sympThrough{x_1}{x_{7}}$ and $\sympThrough{x_{3}}{x_{5}}$ are special then, for some \(0\le i\le 7\), the symps $\sympThrough{x_i}{x_{i+2}}$ and $\sympThrough{x_{i+2}}{x_{i+4}}$ are not adjacent.
\end{lemma}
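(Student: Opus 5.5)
We argue by contradiction: assume $\sympThrough{x_1}{x_{7}}$ and $\sympThrough{x_{3}}{x_{5}}$ are special and that, for every $i$, the symps $\Sigma_i:=\sympThrough{x_i}{x_{i+2}}$ and $\Sigma_{i+2}=\sympThrough{x_{i+2}}{x_{i+4}}$ are adjacent. Here $\Sigma_i$ is the symp through $x_i,x_{i+1},x_{i+2}$, which exists by Hypothesis~$(S)$. We then have a closed chain of symps
\[
\Sigma_7=\sympThrough{x_7}{x_1},\ \Sigma_1=\sympThrough{x_1}{x_3},\ \Sigma_3=\sympThrough{x_3}{x_5},\ \Sigma_5=\sympThrough{x_5}{x_7},
\]
in which consecutive members are adjacent. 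The same holds for the even chain $\Sigma_0,\Sigma_2,\Sigma_4,\Sigma_6$. The goal is to show that $\Sigma_7$ and $\Sigma_3$ cannot be special.

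Special symps have a \emph{unique} common adjacent symp (Lemma~\ref{Lem:SympSymp}). Both $\Sigma_1$ and $\Sigma_5$ are adjacent to $\Sigma_7$ and to $\Sigma_3$, so $\Sigma_1=\Sigma_5$. This symp would then contain $x_1,x_2,x_3,x_5,x_6,x_7$. But $d(x_2,x_6)=4$, and by Hypothesis~$(S)$ together with Lemma~\ref{Lem:d3Special} and Lemma~\ref{Lem:dist4notcoll}, two points in a common symp are collinear or symplectic, hence at distance at most $2$. Two points of $\Oct$ at distance $4$ can be neither, which is a contradiction. Thus either $\Sigma_1$ or $\Sigma_5$ fails to be adjacent to one of its neighbours $\Sigma_7$, $\Sigma_3$, and this gives the required index $i$ among $\{7,1,3,5\}$.

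Before using uniqueness, we must check that all these symps are distinct. In particular $\Sigma_1\neq\Sigma_5$ follows from the same distance argument as above. The symps $\Sigma_7$ and $\Sigma_1$ are distinct because $x_7$ and $x_3$ are at distance $4$, and similarly for the other consecutive pairs. This is needed because adjacency in Lemma~\ref{Lem:SympSymp} is defined only for distinct symps.

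The main point to verify is that $\Sigma_i$ is really well defined as the symp containing $x_i,x_{i+1},x_{i+2}$. The point $x_{i+1}$ lies in $\sympThrough{x_i}{x_{i+2}}$ because it is collinear to both $x_i$ and $x_{i+2}$, which are symplectic, and symps are convex in metasymplectic spaces (Lemma~\ref{Lem:PP}, uniqueness). Given this, the argument is short: it is essentially a direct application of the uniqueness clause for special symps, and no case analysis is needed.
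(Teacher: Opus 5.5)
Your proof is correct and follows the paper's argument: under the assumed adjacencies, uniqueness of the symp adjacent to both special symps $\sympThrough{x_1}{x_7}$ and $\sympThrough{x_3}{x_5}$ forces $\sympThrough{x_1}{x_3}=\sympThrough{x_5}{x_7}$. That symp then contains $x_2$ and $x_6$, which are at distance $4$, contradicting Lemma~\ref{Lem:dist4notcoll} together with Hypothesis~$(S)$.

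As a minor simplification, you do not need the convexity remark, since $x_1$ and $x_5$ already lie in these two (equal) symps and are at distance $4$.
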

\begin{proof}
	Assume for a contradiction that the symps \(\sympThrough{x_1}{x_3}\) and \(\sympThrough{x_5}{x_7}\) are adjacent to both \(\sympThrough{x_1}{x_7}\) and \(\sympThrough{x_3}{x_5}\), and that the latter are special. Then \(\sympThrough{x_1}{x_3}=\sympThrough{x_5}{x_7}\) implying, by  Lemma~\ref{Lem:dist4notcoll}, that $x_2 $ and $ x_6$ are symplectic, contradicting Hypothesis~\((S)\). 
\end{proof}

\begin{lemma}\label{Lem:S17S35Opp}
	If $\sympThrough{x_1}{x_{7}}$ and $\sympThrough{x_{3}}{x_{5}}$ are opposite then the symps $\sympThrough{x_1}{x_{3}}$ and $\sympThrough{x_{3}}{x_{5}}$ are not adjacent.
\end{lemma}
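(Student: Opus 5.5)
The plan is to argue by contradiction, using only that opposite symps contain no collinear pair of points (Corollary~\ref{Cor:NoColInOpp}), together with the polar-space structure of a symp from (MS1).

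Suppose that \(\sympThrough{x_1}{x_{7}}\) and \(\sympThrough{x_{3}}{x_{5}}\) are opposite, and that \(\sympThrough{x_1}{x_{3}}\) and \(\sympThrough{x_{3}}{x_{5}}\) are adjacent. Let \(\pi\) be the unique plane contained in both \(\sympThrough{x_1}{x_3}\) and \(\sympThrough{x_3}{x_5}\).

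The key observation is that \(x_1\) lies in the symp \(\sympThrough{x_1}{x_3}\). By (MS1), this symp is a polar space of rank \(3\), and \(\pi\) is a plane of it. We distinguish two cases.
\begin{itemize}
	\item If \(x_1\in\pi\), then \(x_1\) is collinear to every other point of \(\pi\).
	\item If \(x_1\notin\pi\), we apply (PS3) to \(x_1\) and the lines of \(\pi\). This shows that the set of points of \(\pi\) collinear to \(x_1\) is a subspace of \(\pi\) meeting every line of \(\pi\), so it is at least a line. In particular it is nonempty.
\end{itemize}
In either case there is a point \(y\in\pi\subseteq\sympThrough{x_3}{x_5}\), with \(y\neq x_1\), such that \(x_1\col y\).

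But \(x_1\in\sympThrough{x_1}{x_7}\), so the symps \(\sympThrough{x_1}{x_7}\) and \(\sympThrough{x_3}{x_5}\) contain a pair of collinear points. By Corollary~\ref{Cor:NoColInOpp} they are then not opposite, which contradicts the hypothesis. Hence \(\sympThrough{x_1}{x_3}\) and \(\sympThrough{x_3}{x_5}\) are not adjacent.

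The only step needing care is that a point of a rank~\(3\) polar space is collinear to at least one point of any plane of that space. This is a routine consequence of (PS3), since a plane is spanned by lines each meeting \(x_1^\perp\). I expect no real obstacle here; notably, the argument does not use Hypothesis~\((S)\).
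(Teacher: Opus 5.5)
Your proof is correct and is essentially the paper's argument. The paper observes that \(x_1\), lying in \(\sympThrough{x_1}{x_7}\) opposite \(\sympThrough{x_3}{x_5}\), cannot be collinear to any point of \(\sympThrough{x_3}{x_5}\), so it is far from it, and Lemma~\ref{Lem:PS} then makes \(\sympThrough{x_1}{x_3}\) meet \(\sympThrough{x_3}{x_5}\) only in \(x_3\); you instead argue by contradiction, using (PS3) to exhibit directly the collinear pair that a common plane would force, which just unpacks that ``far'' step (one small caveat: the symps \(\sympThrough{x_1}{x_3}\), \(\sympThrough{x_1}{x_7}\), \(\sympThrough{x_3}{x_5}\) exist only because of Hypothesis~\((S)\), so your closing remark should read ``no further use of \((S)\)'').
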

\begin{proof}
	Assume $\sympThrough{x_1}{x_{7}}$ and $\sympThrough{x_{3}}{x_{5}}$ are opposite. Then, \(x_1\) is far from \(\sympThrough{x_{3}}{x_{5}}\) and hence, $\sympThrough{x_{1}}{x_{3}}$ is symplectic to $\sympThrough{x_{3}}{x_{5}}$.
\end{proof}

\begin{lemma}\label{Lem:NotAdjOpp}
	If, for some \(0\le i\le 7\), the symps $\sympThrough{x_i}{x_{i+2}}$ and $\sympThrough{x_{i+2}}{x_{i+4}}$ are not adjacent, then two of the points \(x_0, x_1, \dots, x_7\) at distance $4$ from each other 
	are opposite.
\end{lemma}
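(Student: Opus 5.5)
The plan is to show directly that \(x_i\) and \(x_{i+4}\) are opposite; these two points are at distance \(4\), so this gives the statement. Write \(\Sigma_1=\sympThrough{x_i}{x_{i+2}}\), \(\Sigma_2=\sympThrough{x_{i+2}}{x_{i+4}}\) and \(p=x_{i+2}\). These symps exist by Hypothesis~\((S)\) and are distinct, since otherwise \(x_i\) and \(x_{i+4}\) would lie in a common symp, which is ruled out below.

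\textbf{Step 1: \(\Sigma_1\cap\Sigma_2=\{p\}\).} Both symps contain \(p\). Since they are not adjacent, Lemma~\ref{Lem:SympSymp} leaves only the symplectic case, so they meet in exactly one point, namely \(p\).

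\textbf{Step 2: \(x_i\notin\Sigma_2\).} If \(x_i\) were in \(\Sigma_2\), then \(x_i\) and \(x_{i+4}\) would be collinear or symplectic. Lemma~\ref{Lem:dist4notcoll} excludes collinearity, and Hypothesis~\((S)\) excludes symplecticity because \(d(x_i,x_{i+4})=4\).

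\textbf{Step 3: \(x_i\) is far from \(\Sigma_2\).} This is the key step. By Hypothesis~\((S)\), \(x_i\) and \(p\) are symplectic, since they are at distance \(2\). Suppose \(x_i\) were close to \(\Sigma_2\), and let \(L\) be the line of points of \(\Sigma_2\) collinear to \(x_i\). The point \(p\) does not lie on \(L\), because \(p\) is not collinear to \(x_i\). By Lemma~\ref{Lem:PS}, \(p\) is then symplectic to \(x_i\) and collinear to every point of \(L\). Hence the points of \(L\) are collinear to both \(x_i\) and \(p\), so they lie in the symp \(\sympThrough{x_i}{p}=\Sigma_1\). This follows from the fact that a symp contains every common neighbour of two of its symplectic points, which holds in the rank~\(3\) polar space \(\Sigma_1\), or, if one prefers, from uniqueness of the symp on two symplectic points applied to \(x_i\) and a point of \(L\). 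Then \(\Sigma_1\cap\Sigma_2\supseteq L\cup\{p\}\), which contradicts Step~1. So \(x_i\) is far from \(\Sigma_2\). The unique symp through \(x_i\) meeting \(\Sigma_2\) in a point is \(\Sigma_1\), and it meets \(\Sigma_2\) in \(y=p\).

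\textbf{Step 4: conclusion.} The far case of Lemma~\ref{Lem:PS} says that every point of \(\Sigma_2\) other than \(p\) is either collinear to \(p\) (and special to \(x_i\)) or symplectic to \(p\) (and opposite to \(x_i\)). The point \(x_{i+4}\in\Sigma_2\) is at distance \(2\) from \(p\), so it is symplectic to \(p\) by Hypothesis~\((S)\), and in particular not collinear to \(p\). Therefore \(x_i\) and \(x_{i+4}\) are opposite.

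The main obstacle is Step~3, namely excluding the close case. It hinges on the small polar-space fact that two symplectic points of \(\Sigma_1\) have all their common neighbours inside \(\Sigma_1\); everything else is a direct application of the stated lemmas.
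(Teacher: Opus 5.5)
Your proof is correct and follows the paper's route. In both, the two symps meet only in \(x_{i+2}\), and Lemma~\ref{Lem:PS} together with Hypothesis~\((S)\) then forces \(x_i\) and \(x_{i+4}\) to be opposite. Two details differ. You get distinctness from \(x_i\notin\sympThrough{x_{i+2}}{x_{i+4}}\), whereas the paper uses that \(x_i\) is special to \(x_{i+3}\). Your Step~3 spells out the exclusion of the close case, which the paper leaves implicit.

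One justification in Step~3 should be fixed. The fact that \(L\subseteq\sympThrough{x_i}{x_{i+2}}\) follows from Lemma~\ref{Lem:PS}: a point outside that symp that is collinear to one of its points is close to it, so its neighbours there form a line and cannot include the non-collinear pair \(x_i,x_{i+2}\). Your alternative, "uniqueness of the symp on two symplectic points", does not apply, because \(x_i\) and the points of \(L\) are collinear, not symplectic.
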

\begin{proof}
	Assume without loss of generality that $\sympThrough{x_0}{x_{2}}$ and $\sympThrough{x_{2}}{x_{4}}$ are not adjacent. 
	By lemma~\ref{Lem:d3Special}, \(x_0\) is special to \(x_3\in \sympThrough{x_{2}}{x_{4}}\). Hence, \(\sympThrough{x_0}{x_{2}} \neq \sympThrough{x_{2}}{x_{4}}\). Since \(x_2\) is contained in both \(\sympThrough{x_0}{x_{2}}\) and \(\sympThrough{x_{2}}{x_{4}}\), these symps are symplectic. It follows from Hypothesis~\((S)\) and Lemma~\ref{Lem:PS} that \(x_0\) and \(x_4\) are opposite.
\end{proof}

\begin{lemma}\label{Lem:ExitsOpposite}
	There exist two points of \(\Oct\) at distance $4$ from each other which are opposite.
\end{lemma}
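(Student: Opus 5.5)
We use the fixed octagon \(x_0,x_1,\dots,x_7\) of \(\Oct\) introduced above. Assume, for a contradiction, that no two of the points \(x_0,\dots,x_7\) at distance \(4\) from each other are opposite. In particular \(x_0\) and \(x_4\) are not opposite. We then locate the pair of symps \(\sympThrough{x_1}{x_7}\) and \(\sympThrough{x_3}{x_5}\) in the classification of Lemma~\ref{Lem:SympSymp}. By Lemma~\ref{Lem:S17S35NotEq} they are distinct, so exactly one of the four relations holds: adjacent, symplectic, special or opposite.

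We rule out each case in turn.
\begin{itemize}
\item The adjacent case is excluded outright by Lemma~\ref{Lem:S17S35NotAdj}.
\item The symplectic case is excluded by Lemma~\ref{Lem:S17S35NotSympl}, because we assumed \(x_0\not\opp x_4\).
\item In the special case, Lemma~\ref{Lem:S17S35Spec} gives an index \(i\) such that \(\sympThrough{x_i}{x_{i+2}}\) and \(\sympThrough{x_{i+2}}{x_{i+4}}\) are not adjacent.
\item In the opposite case, Lemma~\ref{Lem:S17S35Opp} gives that \(\sympThrough{x_1}{x_3}\) and \(\sympThrough{x_3}{x_5}\) are not adjacent, i.e. the same conclusion with \(i=1\).
\end{itemize}
In both remaining cases, Lemma~\ref{Lem:NotAdjOpp} yields two points among \(x_0,\dots,x_7\) at distance \(4\) that are opposite, contradicting our assumption.

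The argument is thus a direct case distinction, and the previous lemmas were set up for exactly this. The only points needing care are checking that each lemma is applied to the right configuration and that the index shifts are legitimate. Lemma~\ref{Lem:NotAdjOpp} is stated up to the cyclic relabelling \(x_i=x_{i-8}\), which is valid because the octagon can be rotated. I expect no real obstacle beyond confirming that Lemma~\ref{Lem:S17S35Spec}, as stated, covers the special case without further hypotheses on \(x_0\) and \(x_4\). If it did not, one would instead directly use \(x_0\not\opp x_4\) together with Lemma~\ref{Lem:specialplanes} in that case.
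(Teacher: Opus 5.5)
Your proposal is correct and follows the paper's proof exactly. It uses the same case split on the relation between $\sympThrough{x_1}{x_7}$ and $\sympThrough{x_3}{x_5}$, with the same chain of lemmas; your assumption $x_0\not\opp x_4$ plays the role of the paper's ``we may assume they are not symplectic''. Lemma~\ref{Lem:S17S35Spec} needs no extra hypothesis, so your fallback via Lemma~\ref{Lem:specialplanes} is unnecessary.
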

\begin{proof}
	Consider the symps $\sympThrough{x_1}{x_{7}}$ and $\sympThrough{x_{3}}{x_{5}}$. By Lemma~\ref{Lem:S17S35NotEq}, they are not equal.
	By Lemma~\ref{Lem:S17S35NotAdj}, they are not adjacent. By Lemma~\ref{Lem:S17S35NotSympl}, we may assume that they are not symplectic. Hence, these are either special or opposite. Either by Lemma~\ref{Lem:S17S35Spec} or by Lemma~\ref{Lem:S17S35Opp}, we find that for some \(i\), \(0\le i\le 7\), the symps $\sympThrough{x_i}{x_{i+2}}$ and $\sympThrough{x_{i+2}}{x_{i+4}}$ are not adjacent.
	Using Lemma~\ref{Lem:NotAdjOpp}, we find two opposite points of \(\Oct\). 
\end{proof}

We now state Lemma 4.1 from \cite{G2inF4} for generalised octagons instead of hexagons. 
\begin{lemma}\label{Lem:inductionOnEvenPoly} 
	Let \(\circ\) be a binary symmetric relation between points of \(\Oct\) at mutual distance \(4\). Suppose that \(\circ\) has the following properties:
		\begin{itemize}
			\item There exist points \(x_1\) and \(x_2\) with \(d(x_1,x_2) = 4\) such that \(x_1 \circ x_2\).
			\item For every three points \(y_1\), \(y_2\) and \(y_3\) of \(\Oct\) such that \(d(y_1,y_2)=4=d(y_3,y_1)\) and \(d(y_2,y_3) = 1\), if \(y_1 \circ y_2\), then \(y_1\circ y_3\).
		\end{itemize}
	Then \(z_1\circ z_2\) for each pair of points of \(\Oct\) at mutual distance $4$.
\end{lemma}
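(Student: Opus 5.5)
The plan is to show that the set of pairs at distance \(4\) is connected under the moves allowed by the second property. Concretely, write \((y_1,y_2)\to(y_1,y_3)\) whenever \(d(y_1,y_2)=d(y_1,y_3)=4\) and \(d(y_2,y_3)=1\). Symmetry of \(\circ\) also lets us move the first coordinate in the same way. The second property says \(\circ\) is preserved along these moves in either coordinate. The first property gives a starting pair \((x_1,x_2)\). Hence it suffices to show that any pair \((z_1,z_2)\) with \(d(z_1,z_2)=4\) is reachable from \((x_1,x_2)\) by a finite chain of such moves.

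\textbf{Step 1: moving the second coordinate with the first fixed.} Fix a point \(a\). I would show that any two points \(b,b'\) with \(d(a,b)=d(a,b')=4\) are joined by a path \(b=b_0,b_1,\dots,b_k=b'\) in the collinearity graph, with every \(b_i\) at distance \(4\) from \(a\). This is the standard connectivity of the set of points opposite a given point in a generalised \(2n\)-gon with \(n\ge 3\), using thickness.

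\begin{itemize}
\item Given \(b\) at distance \(4\) from \(a\), the lines through \(b\) are of two kinds. Exactly one contains a point at distance \(3\) from \(a\), namely the line through \(b\) on a geodesic to \(a\). Every other line through \(b\) contains only points at distance \(4\) from \(a\), because the octagon has diameter \(4\) in the collinearity graph and Corollary~\ref{Cor:UniqueClosest} applies. Moreover, on the special line, every point other than its closest point lies at distance \(4\) from \(a\).
\item Consider the geodesic structure from \(a\). Given \(b,b'\) opposite \(a\), project both toward \(a\) to points at distance \(3\). I would use induction on the distance between these projections, working in the "tree-like" neighbourhood of \(a\).
\item To change the projection, move within opposite points: go from \(b\) along a non-special line (all points there stay opposite \(a\)), then along a different line. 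This alters which point at distance \(3\) from \(a\) is closest.
\end{itemize}

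\textbf{Step 2: chaining the two coordinates.} Given \((x_1,x_2)\) and a target \((z_1,z_2)\), the order of moves is as follows.

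\begin{enumerate}
\item Move the first coordinate \(x_1\) toward \(z_1\) along a path, keeping every intermediate point opposite the current second coordinate.
\item Where the path would fail to stay opposite, first adjust the second coordinate using Step 1.
\end{enumerate}

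Simplest is to choose an intermediate point \(w\) opposite both \(x_1\) and \(z_1\), and then proceed:

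\begin{enumerate}
\item Move \((x_1,x_2)\to(x_1,w)\) by Step 1.
\item By symmetry, move \((x_1,w)\to(z_1,w)\) by Step 1 applied with \(w\) fixed.
\item Move \((z_1,w)\to(z_1,z_2)\) by Step 1.
\end{enumerate}

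It therefore remains to show that any two points have a common opposite point. For a generalised polygon with at least three points per line and three lines per point this is standard. If \(d(x_1,z_1)\le 3\), extend a geodesic from \(x_1\) through \(z_1\), choosing at each step lines and points avoiding the finitely many "bad" choices (thickness gives at least two good choices). If \(d(x_1,z_1)=4\), pick a neighbour of \(z_1\) suitably and reduce to the previous case.

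\textbf{Main obstacle.} I expect the hardest part to be the connectivity claim in Step 1, which is where the case analysis lies. I would first try to prove it directly, using the fact that the points at distance \(\le 3\) from \(a\) form a geodesically convex "ball", so that the complement stays connected via non-special lines. If a direct argument becomes messy, I would instead mirror the proof of Lemma 4.1 in \cite{G2inF4}, replacing hexagon distances by octagon distances.
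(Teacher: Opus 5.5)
Your reduction in Step~2 is sound, so the whole proof rests on Step~1, and that step is not established. The gap is that Step~1 is both unproved and, as you state it, not a general fact.

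\textbf{Your sketch of Step~1 is based on a false picture.} If \(d(a,b)=4\), then \(a\) and \(b\) are at distance \(8\) in the incidence graph, so \emph{every} line through \(b\) is at incidence distance \(7\) from \(a\). By Corollary~\ref{Cor:UniqueClosest}, each of the \(t+1\) lines through \(b\) therefore contains exactly one point at distance \(3\) from \(a\). Consequently:
\begin{itemize}
\item there is no line through \(b\) all of whose points stay opposite \(a\);
\item \(b\) has \(t+1\) ``projections'', not one, so your induction on the distance between projections is not defined.
\end{itemize}
The hard configuration is never addressed. Take \(b\in K\) and \(b'\in K'\), where \(K\neq K'\) are lines through a point \(c\) at distance \(3\) from \(a\). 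You must join \(b\) and \(b'\) inside the set of points opposite \(a\), by a path that together with \(c\) forms a circuit of length at least \(16\) in the incidence graph.

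\textbf{Step~1 cannot be quoted as standard for generalised \(2n\)-gons; it fails for \(n=3\).} Consider the hexagon of order \(2\) whose point stabiliser in \(\mathsf{G_2}(2)\) has Levi factor \(\langle U_{\alpha},U_{-\alpha}\rangle\), with \(\alpha\) short.
\begin{itemize}
\item The unipotent radical \(U=U_\beta U_{\alpha+\beta}U_{2\alpha+\beta}U_{3\alpha+\beta}U_{3\alpha+2\beta}\) acts regularly on the \(32\) points opposite \(a\).
\item The stabilisers in \(U\) of the three lines through the opposite point \(b\) of a standard apartment are \(U_\beta\), \(U_{3\alpha+\beta}\) and \(U_{3\alpha+\beta}^{\,x_{-\alpha}(1)}\).
\item Modulo \(U_{3\alpha+2\beta}\), these span only a \(3\)-dimensional subspace of \(U/U_{3\alpha+2\beta}\cong\mathbb{F}_2^4\).
\end{itemize}
So those \(32\) points form two collinearity classes of size \(16\). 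Moreover, the two lines meeting the opposite set through a point at distance \(2\) from \(a\) lie in different classes. For octagons you would need a real proof, and I would not take the statement for granted for the smallest ones.

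A minor point: ``extending a geodesic from \(x_1\) through \(z_1\)'' ends at distance \(4-d(x_1,z_1)\) from \(z_1\), not \(4\). Instead, walk \(8\) non-backtracking steps from \(z_1\) in the incidence graph, each time avoiding the unique neighbour closer to \(x_1\) if there is one. This also handles \(d(x_1,z_1)=4\) directly.

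\textbf{Repair: let both points move.} This is what makes the lemma true in general. The paper gives no argument of its own; it imports the proof of Lemma~4.1 of \cite{G2inF4}. A self-contained route:
\begin{enumerate}
\item \emph{One apartment.} Take an ordinary octagon \(p_0,\dots,p_7\) with \(p_0\circ p_4\), and let \(w\) be a third point of \(p_4p_5\). The point of that line closest to \(p_0\) is \(p_5\), and the one closest to \(p_1\) is \(p_4\). Hence \(p_0\circ w\), then \(w\circ p_1\), then \(p_1\circ p_5\). So all opposite pairs of one apartment are related.
\item \emph{Apartments sharing a half-apartment} (a path of length \(8\) between opposite elements). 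If its ends are points, the two apartments share an opposite pair. If its ends are lines \(L_0,L_4\), with points \(x_1,\dots,x_4\) on it, their pairs \((x_1,x_5)\) and \((x_1,x_5')\) have \(x_5,x_5'\in L_4\). So they differ by one move.
\item \emph{Chaining apartments.} Let \(\Sigma,\Sigma'\) have a maximal common path \(P\) containing a chamber, of length less than \(8\). Let \(e\) be an end of \(P\), \(g\) the element of \(\Sigma\) opposite \(e\), and \(f'\) the neighbour of \(e\) in \(\Sigma'\) outside \(P\). The geodesics from \(g\) to \(e\) via the neighbour of \(e\) in \(P\) and via \(f'\) form an apartment. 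It shares a half-apartment with \(\Sigma\) and contains \(P\cup\{f'\}\), so induction on \(|P|\) applies.
\end{enumerate}
Since any two chambers lie in a common apartment, all opposite pairs are related. Connectivity of the set of points opposite a fixed point is never needed.
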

\begin{proof}
The proof of Lemma 4.1 of \cite{G2inF4} can be taken over, replacing the distances $3$ and $2$ between points by $4$ and $3$, respectively. 
\end{proof}

\begin{lemma}\label{L20}
	Every two points of \(\Oct\) at distance $4$ from each other are opposite.
\end{lemma}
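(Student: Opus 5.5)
The plan is to apply Lemma~\ref{Lem:inductionOnEvenPoly} with \(\circ\) the relation ``is opposite in \(\MS\)'', restricted to pairs of points of \(\Oct\) at mutual distance \(4\). This relation is clearly symmetric. The first hypothesis of that lemma is exactly Lemma~\ref{Lem:ExitsOpposite}. It therefore remains to verify the propagation property.

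Let \(y_1,y_2,y_3\) be points of \(\Oct\) with \(d(y_1,y_2)=d(y_1,y_3)=4\), \(d(y_2,y_3)=1\), and \(y_1\opp y_2\). Let \(L\) be the line of \(\Oct\) through \(y_2\) and \(y_3\). Consider the incidence-graph distances in \(\Oct\), which has diameter \(8\) and girth \(16\).
\begin{itemize}
\item The line \(L\) is at incidence distance \(7\) from \(y_1\), since two of its points are at incidence distance \(8\) from \(y_1\).
\item By Corollary~\ref{Cor:UniqueClosest}, \(L\) has a unique point \(z\) at point-distance \(3\) from \(y_1\).
\item Every other point of \(L\) is at distance \(4\) from \(y_1\).
\end{itemize}
In particular \(z\neq y_2,y_3\). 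By Lemma~\ref{Lem:d3Special}, \(z\) is special to \(y_1\). Since the embedding is full, \(L\) is also a line of \(\MS\), and \(y_1\notin L\).

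Now apply Corollary~\ref{Cor:PointLine} to the point \(y_1\) and the line \(L\). Among the six possible configurations, the only one in which some point of \(L\) is opposite to \(y_1\) is the last one: exactly one point of \(L\) is special to \(y_1\), and all the others are opposite to \(y_1\). Since \(y_2\in L\) is opposite to \(y_1\), we are in this case. The special point must be \(z\), so every point of \(L\setminus\{z\}\) is opposite to \(y_1\). In particular \(y_3\opp y_1\), which is the required propagation.

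Lemma~\ref{Lem:inductionOnEvenPoly} then gives that every pair of points of \(\Oct\) at distance \(4\) is opposite. The only delicate point is the octagon bookkeeping showing that \(z\) exists and differs from \(y_2\) and \(y_3\). This follows directly from the girth and diameter of the incidence graph, so I do not expect a real obstacle. The substantive work was already done in Lemma~\ref{Lem:ExitsOpposite}.
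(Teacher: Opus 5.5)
Your proposal is correct and follows the paper's proof essentially step for step. The base case comes from Lemma~\ref{Lem:ExitsOpposite}. Propagation uses the unique point of \(y_2y_3\) at distance \(3\) from \(y_1\), which is special to \(y_1\) by Lemma~\ref{Lem:d3Special}, so Corollary~\ref{Cor:PointLine} forces the remaining points of that line to be opposite \(y_1\). Lemma~\ref{Lem:inductionOnEvenPoly} then concludes.
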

\begin{proof}
	By Lemma~\ref{Lem:ExitsOpposite}, there exist points \(x_1\) and \(x_2\) with \(d(x_1,x_2) = 4\) such that \(x_1\) and \(x_2\) are opposite.
	Let \(y_1\), \(y_2\) and \(y_3\) be three points of \(\Oct\) such that \(d(y_1,y_2)=4=d(y_3,y_1)\), \(d(y_2,y_3) = 1\) and \(y_1\) and \(y_2\) are opposite. The line \(y_2y_3\) contains \(y_2\) opposite to \(y_1\) and a unique point \(y_4\) at distance 3 from \(y_1\). By Lemma~\ref{Lem:d3Special}, \(y_1\) and \(y_4\) are special. Hence, Corollary~\ref{Cor:PointLine} implies that \(y_1\) and \(y_3\neq y_4\) are opposite. 
	Lemma~\ref{Lem:inductionOnEvenPoly}, with the relation `\emph{is opposite to}' proves the statement.
\end{proof}

\begin{corollary}
	The embedding is pseudo-isometric.
\end{corollary}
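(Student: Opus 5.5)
We verify the four clauses of Definition~\ref{Def:PseudoIso} one by one. Since \(\Oct\) is a generalised octagon, the distance between two of its points lies in \(\{0,1,2,3,4\}\). The relations of Lemma~\ref{Lem:PP} (collinear, symplectic, special, opposite) are mutually exclusive. So it suffices to prove the forward implications:
\begin{itemize}
	\item distance \(1\) implies collinear;
	\item distance \(2\) implies symplectic;
	\item distance \(3\) implies special;
	\item distance \(4\) implies opposite.
\end{itemize}
The converses then follow automatically. For example, if two points of \(\Oct\) are special, their distance cannot be \(1\), \(2\) or \(4\), since those distances force a different, incompatible relation; hence the distance is \(3\).

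The forward implications have all been established.
\begin{itemize}
	\item Distance \(1\) means the two points lie on a common line of \(\Oct\). Because the embedding is full, this is a line of \(\MS\), so the points are collinear.
	\item Distance \(2\) implies symplectic by Hypothesis~\((S)\).
	\item Distance \(3\) implies special by Lemma~\ref{Lem:d3Special}.
	\item Distance \(4\) implies opposite by Lemma~\ref{L20}.
\end{itemize}

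There is no real obstacle remaining. The only point needing care is the bookkeeping that the four relations of Lemma~\ref{Lem:PP} are pairwise exclusive. This holds: special points have a unique common collinear point while opposite points have none, and symplectic points are not collinear. Together with the observation that distinct points of \(\Oct\) always have distance in \(\{1,2,3,4\}\), this yields the biconditionals in Definition~\ref{Def:PseudoIso}.
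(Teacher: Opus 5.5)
Your proof is correct and follows the paper's argument. The paper states the corollary without a separate proof because it is just the assembly you give: fullness for distance~$1$, Hypothesis~$(S)$, Lemma~\ref{Lem:d3Special} and Lemma~\ref{L20}, with the reverse implications automatic because the four relations of Lemma~\ref{Lem:PP} partition the pairs of distinct points. Your exclusivity check covers only some of the pairwise cases, but the others are just as immediate; for example, collinear and symplectic pairs have more than one common collinear point, so they are neither special nor opposite.
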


We have proven everything needed for Theorem~\ref{Thm:PseudoIsoEmbedding}.

\begin{remark}
	There is no known embedding which is not pseudo-isometric. We were not yet able to either construct such an embedding or show that such embeddings do not exist. A first step would be proving the existence or nonexistence of full embeddings of generalised octagons in symps, that is, in rank~3 polar spaces.
\end{remark}
\section{Strongly polarised embeddings}\label{Sec:StronglyPolarisedEmbedding}

\begin{definition}[\bf Polarised Embedding]\label{Def:Polarised}
	Let \(\Oct\) be a generalised octagon fully embedded in a metasymplectic space \(\MS\).
	Then, the embedding of \(\Oct\) is \emph{polarised} if, 
	\begin{itemize}
		\item[\((P)\)] through every point \(x\) of \(\Oct\) there exists a symp \(x^\polarity\) of \(\MS\) such that all points collinear to \(x\) in \(\Oct\) are contained in \(x^\polarity\).
	\end{itemize}
\end{definition}

\begin{definition}[\bf Strongly Polarised Embedding]\label{Def:StronglyPolarised} 
	Let \(\Oct\) be a generalised octagon fully embedded in a metasymplectic space \(\MS\).
	Then, the embedding of \(\Oct\) is \emph{strongly polarised} if:
	\begin{itemize}
		\item through every point \(x\) of \(\Oct\) there exists a unique symp \(x^\polarity\) of \(\MS\) such that all points collinear to \(x\) in \(\Oct\) are contained in \(x^\polarity\),
		\item two points \(x\) and \(y\) of \(\Oct\) are collinear, symplectic, special or opposite, respectively, if, and only if, the symps \(x^\polarity\) and \(y^\polarity\) are adjacent, symplectic, special or opposite, respectively.
	\end{itemize}
\end{definition}

\subparagraph{}
The goal of this section is to show the following:

\begin{theorem}\label{Thm:StronglyPolarisedEmbedding}
	Let \(\Oct\) be a generalised octagon fully embedded in a metasymplectic space \(\MS\) such that the embedding is pseudo-isometric.
	Then, the embedding of \(\Oct\) in \(\MS\) is strongly polarised if, and only if, it is polarised.
\end{theorem}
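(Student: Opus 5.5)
The direction ``strongly polarised implies polarised'' is immediate from the definitions. For the converse, assume the embedding is pseudo-isometric and satisfies \((P)\). The plan has three steps. First we show that \(x^\polarity\) is unique. Then we show that the map \(x\mapsto x^\polarity\) sends collinear points to adjacent symps. Finally we obtain the remaining three relations by comparing distances, using Lemma~\ref{Lem:SympSymp}, Lemma~\ref{Lem:PS} and Corollary~\ref{Cor:PlaneSymp}.

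\emph{Uniqueness.} Let \(x\in\PO\) and let \(L,M\) be two \(\Oct\)-lines through \(x\). By Lemma~\ref{Lem:LinesNotPlanar}, \(L\) and \(M\) do not lie in a common plane. Two non-coplanar lines through \(x\) that lie in a common symp are opposite points of the \(\{x,x^\polarity\}\)-GQ. By (MS3), two distinct symps meet in at most a plane, so they cannot both contain two non-coplanar lines \(L,M\); otherwise both would contain \(\langle L,M\rangle\), which is not contained in any plane. Hence \(x^\polarity\) is unique. We also record that \(x^\polarity\) contains every point at distance at most \(1\) from \(x\). By pseudo-isometry, these points pairwise have distance at most \(2\), hence are non-special, consistent with lying in a symp. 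Points at distance \(2\) from \(x\) are symplectic to \(x\) but do not lie in \(x^\polarity\) (check: if \(z\in x^\polarity\) with \(d(x,z)=2\) via \(y\), then \(z\) is collinear to some point of every line \(xw\) in \(x^\polarity\), giving \(d\le 3\) relations that contradict pseudo-isometry for suitably chosen \(w\)).

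\emph{Collinear points.} Let \(d(x,y)=1\). Both \(x^\polarity\) and \(y^\polarity\) contain the line \(xy\). The goal is to show they share a plane. I would take an \(\Oct\)-line \(L\neq xy\) through \(x\) and an \(\Oct\)-line \(M\neq xy\) through \(y\). Points of \(L\setminus\{x\}\) and \(M\setminus\{y\}\) are at distance \(3\), hence special. Then I would use Lemma~\ref{Lem:adjacentspecial} and the local structure of the residue of the line \(xy\), which is a projective plane or dual structure coming from (MS2), to force a plane through \(xy\) lying in both symps. Concretely, \(L\subseteq x^\polarity\) spans with \(xy\) a singular subspace \(\pi_x\), and similarly \(M\) gives \(\pi_y\). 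The claim is that \(x^\polarity\cap y^\polarity\) is a plane. If the intersection were only the line \(xy\), I would derive that some point of \(L\) and some point of \(M\) are symplectic, contradicting distance \(3\). I expect this step to be the \emph{main obstacle}: it needs a careful case analysis in the dual polar space residue at \(x\), where \(x^\polarity\) and \(y^\polarity\) are points and \(xy\) is a "plane" of the dual polar space.

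\emph{Remaining relations.} Once collinearity is established, pass along an \(\Oct\)-path. If \(d(x,z)=2\) via \(y\), then \(x^\polarity\) and \(z^\polarity\) are both adjacent to \(y^\polarity\). They are distinct, because \(x^\polarity\) contains the lines of \(\Oct\) through \(x\) and these are non-coplanar with the lines through \(z\). They cannot be adjacent, since \(x\) would then be collinear with a plane of \(z^\polarity\), producing \(\Oct\)-points at forbidden distance via Corollary~\ref{Cor:PointLine}. A point \(x\in x^\polarity\) symplectic to \(z\in z^\polarity\) then gives, via Lemma~\ref{Lem:specialplanes} together with the containments above, that the symps meet, hence are symplectic. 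For distance \(3\) and \(4\), I would use Lemma~\ref{Lem:PS} and Corollary~\ref{Cor:NoColInOpp} with the point \(x\) relative to \(w^\polarity\), where \(d(x,w)=3\), to show the symps are special. For distance \(4\), opposite points \(x,w\) give opposite symps by Corollary~\ref{Cor:NoOppInAdj} and the fact that \(x\) is far from \(w^\polarity\). Since the four distances exhaust both the point-pair cases and the symp-pair cases, the implications reverse, and the embedding is strongly polarised.
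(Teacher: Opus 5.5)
Your skeleton matches the paper's: uniqueness of \(x^\polarity\), then transporting each of the four point relations to the corresponding symp relation, then reversing by exhaustion. Your uniqueness argument is fine. The effort is misplaced, though, and several steps are wrong or missing.

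\textbf{Distance \(1\).} The step you call the main obstacle is immediate. By (MS2), two distinct symps never meet in exactly a line; this is implicit in Lemma~\ref{Lem:SympSymp}, and in the projective-plane residue of \(xy\) any two symps share a plane. So \(x^\polarity\) and \(y^\polarity\), which both contain \(xy\), are adjacent as soon as they are distinct. Distinctness follows from neighbours \(x'\in x^\polarity\), \(y'\in y^\polarity\) with \(d(x',y')=3\), which are special.

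\textbf{Distance \(2\).} Your exclusion of adjacency for \(d(x,z)=2\) does not work. The point \(x\) is close to \(z^\polarity\) and collinear with only a line of it, not a plane. The \(\Oct\)-points of \(z^\polarity\) lie at distances \(1,2,3\) from \(x\), exactly as closeness permits, so no forbidden distance arises. What works is to take neighbours \(x'\) of \(x\) and \(z'\) of \(z\) with \(d(x',z')=4\). They are opposite, and Corollary~\ref{Cor:NoOppInAdj} then excludes adjacency.

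\textbf{Distance \(3\).} You only name lemmas. Corollary~\ref{Cor:NoColInOpp} and an opposite pair exclude the opposite and adjacent cases, but the symplectic case needs an argument. Here is the paper's:
\begin{enumerate}
\item \(x\) is opposite a neighbour of \(w\), so \(x\) is far from \(w^\polarity\).
\item Hence the unique common point \(z\) is the point of \(w^\polarity\) symplectic to \(x\), and Lemma~\ref{Lem:PS} forces \(w\col z\).
\item A neighbour \(x''\in x^\polarity\) of \(x\) with \(d(x'',w)=4\) is then collinear or symplectic to \(z\), so it cannot be opposite \(w\) (Lemma~\ref{Lem:SSOpp}), a contradiction.
\end{enumerate}

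\textbf{Distance \(4\): the most serious gap.} Knowing that \(x\) is far from \(w^\polarity\), together with Corollary~\ref{Cor:NoOppInAdj}, only rules out adjacency. A far point is compatible with \(x^\polarity\) and \(w^\polarity\) being symplectic or special, and both cases need genuine arguments.
\begin{itemize}
\item If \(x^\polarity\cap w^\polarity=\{z\}\): since \(z\in x^\polarity\), it is collinear with some point of an \(\Oct\)-line through \(x\). But every point of such a line is opposite some \(\Oct\)-point of \(w^\polarity\), so by Lemma~\ref{Lem:PS} it cannot be collinear with \(z\in w^\polarity\).
\item If the symps are special with common adjacent symp \(\Sigma\): take three \(\Oct\)-lines through \(x\), the points \(a_i\) on them at distance \(3\) from \(w\), and the neighbours \(b_i\) of \(w\) with \(d(a_i,b_i)=2\). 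Since \(a_i\sympl b_i\), Lemma~\ref{Lem:specialplanes} gives \(a_i\in x^\polarity\cap\Sigma\) or \(b_i\in w^\polarity\cap\Sigma\). By pigeonhole, two points at mutual distance \(2\), hence symplectic, lie in a common plane, which is impossible.
\end{itemize}
Without arguments of this kind, ``opposite implies opposite'' is unproved. Your final reversal by exhaustion needs all four forward implications, so it fails too.
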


\subparagraph{}
Every embedding which is strongly polarised is by definition polarised.
We assume for the rest of this section that \(\Oct\) is a generalised octagon fully embedded in a metasymplectic space \(\MS\) such that the embedding is pseudo-isometric and polarised. 

\begin{lemma}\label{Lem:ExistsSymp}
	For every point \(x\) of \(\Oct\) there exists a unique symp of \(\MS\) which contains all points at distance 1 from \(x\).
\end{lemma}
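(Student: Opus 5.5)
Existence is exactly Hypothesis~\((P)\), so only uniqueness needs an argument. Fix a point \(x\) of \(\Oct\) and suppose \(\Sigma_1\neq\Sigma_2\) are symps both containing every point at distance \(1\) from \(x\). Since \(\Oct\) is a generalised octagon, \(x\) lies on \(t+1\ge 3\) lines of \(\Oct\), say \(L_1,L_2,L_3,\dots\). Each of these lines is contained in both \(\Sigma_1\) and \(\Sigma_2\), because every point of it is \(x\) or at distance \(1\) from \(x\). Hence \(\Sigma_1\cap\Sigma_2\) contains the union of at least three distinct lines through \(x\).

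By (MS3), \(\Sigma_1\cap\Sigma_2\) is empty, a point, a line or a plane. It contains two distinct lines, so it must be a plane \(\pi\), and then \(L_1\) and \(L_2\) both lie in \(\pi\). This contradicts Lemma~\ref{Lem:LinesNotPlanar}, which says no two lines of \(\Oct\) through \(x\) lie in a common plane. Since the embedding is pseudo-isometric, it satisfies Hypothesis~\((S)\), so Lemma~\ref{Lem:LinesNotPlanar} applies. Therefore \(\Sigma_1=\Sigma_2\).

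Alternatively, one can avoid Lemma~\ref{Lem:LinesNotPlanar}. Take \(y\in L_1\setminus\{x\}\) and \(z\in L_2\setminus\{x\}\). Then \(d(y,z)=2\), so \(y\) and \(z\) are symplectic by pseudo-isometry, and the symp through them is unique by Lemma~\ref{Lem:PP}. Both \(\Sigma_1\) and \(\Sigma_2\) contain \(y\) and \(z\), so they both equal \(\sympThrough{y}{z}\).

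I expect no real obstacle here. The only care needed is to check that "contains all points at distance \(1\)" forces whole lines of \(\Oct\) into the symp, which holds because the embedding is full, and that \(t\ge 2\) (in fact \(t\ge 1\) suffices) gives at least two such lines.
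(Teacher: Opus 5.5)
Your proposal is correct, and your second argument is exactly the paper's proof: pick $y$ and $z$ on two different lines of $\Oct$ through $x$, note that they are symplectic by pseudo-isometry, and conclude that any symp containing all points at distance~$1$ from $x$ must be $\sympThrough{y}{z}$, with existence supplied by $(P)$. Your first route via (MS3) and Lemma~\ref{Lem:LinesNotPlanar} is only a repackaging of the same observation, since that lemma is itself proved from the symplecticity of such $y$ and $z$.
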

\begin{proof}
	Let \(y\) and \(z\) be two points at distance 1 from \(x\). Since the embedding is pseudo-isometric, \(y\) and \(z\) are symplectic and are therefore contained in a unique symp. By Hypothesis~\((P)\), this symp \(\sympThrough{y}{z}\) contains all points at distance 1 from \(x\). 
\end{proof}

Let \(\polarity\) be the map from the points of \(\Oct\) to the symps of \(\MS\) which maps each point \(x\) to the unique symp containing all points at distance 1 from \(x\).

\begin{lemma}
	Let \(x\) and \(y\) be two points of \(\Oct\) which are collinear. Then, \(x^\polarity\) and \(y^\polarity\) are adjacent.
\end{lemma}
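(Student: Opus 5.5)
We need to show that if \(x\) and \(y\) are collinear points of \(\Oct\), then \(x^\polarity\) and \(y^\polarity\) meet in a plane. The plan is to exhibit a plane contained in both symps, and then to use (MS3) to conclude that they are distinct and adjacent.

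\textbf{Step 1: the line \(L=xy\) lies in both symps.} Let \(L=xy\), a line of \(\Oct\) containing \(x\) and \(y\). All points of \(L\setminus\{x\}\) are at distance \(1\) from \(x\), so they lie in \(x^\polarity\). Since \(x\in x^\polarity\) by definition, we get \(L\subseteq x^\polarity\). By the same argument \(L\subseteq y^\polarity\).

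\textbf{Step 2: a point \(u\) of \(x^\polarity\) collinear with all of \(L\).} Choose a line \(M\neq L\) of \(\Oct\) through \(x\), and a point \(u\in M\setminus\{x\}\).
\begin{itemize}
\item The line \(M\) lies in \(x^\polarity\), so \(u\in x^\polarity\).
\item By the pseudo-isometry, \(d(u,y)=2\) forces \(u\) and \(y\) to be symplectic.
\item The symp \(x^\polarity\) contains \(u\) and \(y\), so \(x^\polarity=\sympThrough{u}{y}\).
\item Inside the rank~3 polar space \(x^\polarity\), the point \(u\) is collinear with \(x\) but not with \(y\). So \(u\) is not collinear with all of \(L\).
\end{itemize}
This particular \(u\) is therefore not the point we want. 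Instead, take the plane generated by \(L\) and a suitable point of \(x^\polarity\). Since \(x^\polarity\) is a polar space of rank \(3\), \(L\) lies in some plane \(\pi\) of \(x^\polarity\). The difficulty is to show that one such plane also lies in \(y^\polarity\).

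\textbf{Step 3: pin down the relation between the symps using a point of \(y^\polarity\).} Pick \(w\) on a line \(N\neq L\) of \(\Oct\) through \(y\), with \(w\neq y\). Then \(w\in y^\polarity\) and \(d(w,x)=2\), so \(w\pperp x\). Compare this with the point \(u\) from Step~2: we have \(d(u,w)=3\), so \(u\) and \(w\) are special. Now \(x^\polarity\) and \(y^\polarity\) share the line \(L\). By (MS3) their intersection is exactly \(L\), a plane, or they coincide.
\begin{itemize}
\item \emph{They cannot coincide.} Otherwise \(u\) and \(w\) would lie in a common symp and hence not be special.
\item \emph{Suppose the intersection is exactly \(L\); we aim for a contradiction.} The point \(w\) is collinear with \(y\in L\) but not with \(x\). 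Since \(w\pperp x\), and \(x\in x^\polarity\) with \(w\notin x^\polarity\), the point \(w\) is close to \(x^\polarity\). So \(w\) is collinear with a line \(K\) of \(x^\polarity\) through \(y\), with \(K\) not through \(x\). Thus \(K\neq L\), and by symmetry \(K\) lies in \(y^\polarity\) too, provided \(w\) sees \(K\) inside the symp \(\sympThrough{w}{k}\) for \(k\in K\). Here I would use the symplectic relation between \(w\) and points of \(K\setminus\{y\}\) (Lemma~\ref{Lem:PS}) to show that \(K\subseteq y^\polarity\). The argument I would try is that \(x\) and \(w\) are symplectic via \(\sympThrough{x}{w}\), which meets \(x^\polarity\) in a plane through \(x\) and \(K\)-related points. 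Then the plane spanned by \(L\) and \(K\) lies in \(x^\polarity\). Since it also contains two intersecting lines of \(y^\polarity\), namely \(L\) and \(K\), it lies in \(y^\polarity\) as well, because planes in a polar space are determined by two lines. That gives a plane in the intersection, contradicting the assumption that the intersection is exactly \(L\).
\end{itemize}

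\textbf{Main obstacle.} The hard step is showing that the line \(K\) of \(x^\polarity\) collinear with \(w\) lies in \(y^\polarity\). The first thing I would try is to look at \(y^\polarity\) from the point \(w\): in \(y^\polarity\), the points collinear with \(w\) are exactly \(w^\perp\cap y^\polarity\), which contains \(y\). I would then use Lemma~\ref{Lem:adjacentspecial} together with the fact that \(u\) and \(w\) are special to force the plane through \(L\) in \(x^\polarity\) that contains \([u,w]\)-type points to coincide with a plane through \(L\) in \(y^\polarity\).
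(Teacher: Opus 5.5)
Your Step~1 and your bullet showing \(x^\polarity\neq y^\polarity\) are correct. That bullet uses the special pair \(u\in x^\polarity\), \(w\in y^\polarity\) at distance~\(3\), which is exactly the paper's argument. The gap is the remaining case. You treat ``\(x^\polarity\cap y^\polarity\) is exactly the line \(L\)'' as a live possibility and never actually exclude it. The argument you offer there is only sketched (``the argument I would try\ldots''). Its key claim \(K\subseteq y^\polarity\) is essentially the statement you are trying to prove. Indeed, \(K\) is the line \(w^\perp\cap x^\polarity\), and the natural reason it lies in \(y^\polarity\) is that it equals \(w^\perp\cap(x^\polarity\cap y^\polarity)\) once you already know this intersection is a plane. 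Neither Step~2 nor the ``main obstacle'' paragraph supplies this, so as written the proof is incomplete.

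The missing idea is that in a metasymplectic space two distinct symps never meet in exactly a line; the list in (MS3) is not sharp. By Lemma~\ref{Lem:SympSymp}, distinct symps either share a unique plane (adjacent), share a unique point (symplectic), or have no common point (special or opposite). You can also see this directly from (MS2). In the residue of \(x\), the symps through \(x\) are the points of a rank~\(3\) polar space and the lines through \(x\) are its planes. So two symps through \(x\) containing the common line \(L\) are collinear points there, i.e.\ they share a plane through \(x\). Hence adjacency is immediate as soon as \(x^\polarity\neq y^\polarity\) and \(L\subseteq x^\polarity\cap y^\polarity\). This is precisely how the paper concludes. With this observation, your Step~1 plus the ``cannot coincide'' bullet already form the complete proof.
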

\begin{proof}
	Since the embedding is pseudo-isometric, \(x\) and \(y\) are at distance 1 from each other.
	Let \(x'\) and \(y'\) two other points of \(\Oct\) at distance 1 from respectively \(x\) and \(y\) and at distance 3 from each other. Because the embedding is pseudo-isometric, \(x'\in x^\polarity\) is special to \(y'\in y^\polarity\) and therefore \(x^\polarity\neq y^\polarity\).
	The statement follows from \(xy\) being contained in both \(x^\polarity\) and \(y^\polarity\).
\end{proof}

\begin{lemma}\label{Lem:SymplToSympl}
	Let \(x\) and \(y\) be two points of \(\Oct\) which are symplectic. Then \(x^\polarity\) and \(y^\polarity\) are also symplectic
\end{lemma}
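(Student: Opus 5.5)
The plan is to show that \(x^\polarity\) and \(y^\polarity\) share a point, which leaves only the adjacent and symplectic cases of Lemma~\ref{Lem:SympSymp}. Adjacency is then ruled out by a pair of opposite points, one in each symp.

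Since the embedding is pseudo-isometric, \(d(x,y)=2\). Let \(z\) be the unique point of \(\Oct\) with \(d(x,z)=d(z,y)=1\).

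\textbf{Step 1: a common point.} By definition of \(\polarity\), \(z\in x^\polarity\) (as \(d(x,z)=1\)) and \(z\in y^\polarity\) (as \(d(y,z)=1\)). So \(x^\polarity\cap y^\polarity\) is non-empty.

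\textbf{Step 2: a pair of opposite points.} Choose a point \(x'\) of \(\Oct\) with \(d(x,x')=1\) and \(x'\notin xz\). Choose a point \(y'\) of \(\Oct\) with \(d(y,y')=1\) and \(y'\notin yz\). This is possible because every point is on at least three lines. The path \(x',x,z,y,y'\) uses four distinct consecutive lines \(x'x, xz, zy, yy'\) and never backtracks.
\begin{itemize}
\item In a generalised octagon the incidence graph has girth \(16\), so such a path of length \(4\) in the collinearity graph is the unique shortest path.
\item Hence \(d(x',y')=4\), and by pseudo-isometry \(x'\) and \(y'\) are opposite.
\item Moreover \(x'\in x^\polarity\) and \(y'\in y^\polarity\).
\end{itemize}

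\textbf{Step 3: the two symps are distinct.} Two points of a common symp are collinear or symplectic, never opposite. Since \(x'\in x^\polarity\) and \(y'\in y^\polarity\) are opposite, \(x^\polarity\neq y^\polarity\).

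\textbf{Step 4: the two symps are not adjacent.} Corollary~\ref{Cor:NoOppInAdj}, applied to the opposite points \(x'\in x^\polarity\) and \(y'\in y^\polarity\), shows that \(x^\polarity\) and \(y^\polarity\) are not adjacent.

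\textbf{Step 5: conclusion.} By Lemma~\ref{Lem:SympSymp}, two distinct symps sharing a point are either adjacent or symplectic; special or opposite symps meet in no point. Adjacency is excluded by Step 4, so \(x^\polarity\) and \(y^\polarity\) are symplectic, meeting exactly in \(z\).

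The only step needing care is Step 2, the claim \(d(x',y')=4\). It is a routine consequence of the girth of a generalised octagon, so I expect no real obstacle.
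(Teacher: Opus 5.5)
Your proof is correct and is essentially the paper's argument: the common point $z$ leaves only adjacency or symplecticity, and the opposite points $x'\in x^\theta$, $y'\in y^\theta$ at distance $4$ rule out adjacency (and, as you make explicit, also equality). One small correction in Step~2: that path is \emph{a} shortest path between $x'$ and $y'$, but not the unique one, since points at distance $4$ in a generalised octagon are joined by $t+1\ge 3$ such paths; your argument only needs $d(x',y')=4$, and that conclusion is right.
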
	
\begin{proof}
	Since the embedding is pseudo-isometric, \(x\) and \(y\) are at distance 2 from each other. Let \(z\) be the unique point of \(\Oct\) at distance 1 from both \(x\) and \(y\). The point \(z\) is contained in both \(x^\polarity\) and \(y^\polarity\) and hence these symps are either adjacent or symplectic. Let \(x'\) and \(y'\) be two points of \(\Oct\) at distance 1 from \(x\) and \(y\), respectively, and at distance~4 from each other. Because the embedding is pseudo-isometric, the points \(x'\) and \(y'\) are opposite.
	The symp \(x^\polarity\) contains \(x'\) and the symp \(y^\polarity\) contains \(y'\). Hence these symps can not be adjacent (Corollary~\ref{Cor:NoOppInAdj}) and are therefore symplectic.	
\end{proof}

\begin{lemma}\label{Lem:SpecToSpec}
	Let \(x\) and \(y\) be two points of \(\Oct\) which are special. Then \(x^\polarity\) and \(y^\polarity\) are also special.
\end{lemma}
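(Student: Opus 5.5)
The plan is to put $x$ and $y$ on a path of $\Oct$ and then rule out, one at a time, every relation between $x^\polarity$ and $y^\polarity$ other than being special. Since the embedding is pseudo-isometric, $d(x,y)=3$. So there are points $a,b$ of $\Oct$ with $d(x,a)=d(a,b)=d(b,y)=1$. By definition of $\polarity$ we have $a\in x^\polarity$ and $b\in y^\polarity$. We also pick a point $x'$ at distance $1$ from $x$ on a line of $\Oct$ through $x$ other than $xa$, and a point $y'$ at distance $1$ from $y$ on a line of $\Oct$ through $y$ other than $yb$.

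\textbf{Step 1: distances among the auxiliary points.} The paths $x'\,x\,a\,b\,y'$ and $x'\,x\,a\,b\,y$ are non-backtracking. Each uses consecutive distinct lines, and their incidence-graph length is at most $8$. Since the girth is $16$ and the diameter is $8$, these are geodesics, so $d(x',y')=4$ and $d(x',b)=3$. The same argument gives $d(a,y')=3$. By pseudo-isometry, $x'$ and $y'$ are opposite, $x'$ is special to $b$, and $a$ is special to $y'$.

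\textbf{Step 2: not opposite, not equal, not adjacent.} The collinear points $a\in x^\polarity$ and $b\in y^\polarity$ show, by Corollary~\ref{Cor:NoColInOpp}, that $x^\polarity$ and $y^\polarity$ are not opposite. The opposite points $x'\in x^\polarity$ and $y'\in y^\polarity$ show, by Corollary~\ref{Cor:NoOppInAdj}, that the two symps are not adjacent. They are also not equal, since a symp is a polar space and so contains no special or opposite pairs.

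\textbf{Step 3: not symplectic.} I expect this to be the main step. Suppose $x^\polarity\cap y^\polarity=\{z\}$. First, $b\notin x^\polarity$, since otherwise $x'$ and $b$ would be a special pair inside a symp. Similarly, $a\notin y^\polarity$, since otherwise $a$ and $y'$ would be a special pair inside $y^\polarity$. In particular $a\neq z$ and $b\neq z$.

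Now $y^\polarity$ is a symp through $b$ meeting $x^\polarity$ in exactly one point, so by Lemma~\ref{Lem:PS} the point $b$ is far from $x^\polarity$, and that meeting point is $z$. Lemma~\ref{Lem:PS} then says every point of $x^\polarity$ other than $z$ is special or opposite to $b$. But $a\in x^\polarity$, $a\neq z$, and $a$ is collinear to $b$, which is a contradiction.

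Hence $x^\polarity$ and $y^\polarity$ are special. The only delicate points are checking the octagon distances in Step~1 and the far/close dichotomy in Step~3. Everything else follows directly from Corollaries~\ref{Cor:NoOppInAdj} and~\ref{Cor:NoColInOpp}.
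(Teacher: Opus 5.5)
\textbf{The gap is in Step~3}, where you conclude that $b$ is far from $x^\polarity$ because $y^\polarity$ is a symp through $b$ meeting $x^\polarity$ in one point. Lemma~\ref{Lem:PS} only gives the other direction: a far point lies in a \emph{unique} symp meeting $\Sigma$ in a point. The implication you need is false. Suppose $b$ is close to $\Sigma$, with $L$ the line of points of $\Sigma$ collinear to $b$, and let $z\in L$. By (MS2), the residue of $z$ is a rank~$3$ polar space whose points are the symps through $z$ and whose planes correspond to the lines through $z$. In it, the line $bz$ is a plane not containing the point $\Sigma$. Take any point of that plane not collinear with $\Sigma$ (these exist, since the points of that plane collinear with $\Sigma$ form a line). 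It is a symp through $bz$ sharing no line through $z$ with $\Sigma$, so by (MS3) it meets $\Sigma$ exactly in $z$.

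In your configuration $b\notin x^\polarity$ and $b\col a\in x^\polarity$, so $b$ is close to $x^\polarity$ however $x^\polarity$ and $y^\polarity$ intersect. The ``contradiction'' you reach ($a\neq z$ is collinear to $b$) merely re-proves this closeness. It refutes your farness claim, not the hypothesis $x^\polarity\cap y^\polarity=\{z\}$. The same reasoning would show that no symp through a point close to $\Sigma$ meets $\Sigma$ in a single point, which the residue argument above shows is false.

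\textbf{The fix} is to apply Lemma~\ref{Lem:PS} to a point that is provably far; the paper does this with $x$ and $y^\polarity$.
\begin{itemize}
\item The path $x\,a\,b\,y\,y'$ is a geodesic, so $d(x,y')=4$ and $x\opp y'\in y^\polarity$. Hence $x$ is neither in nor close to $y^\polarity$, i.e.\ it is far.
\item Since $x^\polarity$ is a symp through $x$ meeting $y^\polarity$ in one point, uniqueness in Lemma~\ref{Lem:PS} makes $z$ the point of $y^\polarity$ symplectic to $x$.
\item Now $d(x',y)=4$ gives $x'\opp y$, so $y\notin x^\polarity$ and $y\neq z$. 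As $y$ is special to $x$, Lemma~\ref{Lem:PS} forces $y\col z$.
\item But $x'$ and $z$ lie in the symp $x^\polarity$ and $z\col y$. This is incompatible with $x'\opp y$, by Lemma~\ref{Lem:SSOpp} and the definition of opposite points.
\end{itemize}

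\textbf{A smaller slip in Step~1}: $x'\,x\,a\,b\,y'$ is not a path, since $b$ and $y'$ need not be collinear. The real path $x'\,x\,a\,b\,y\,y'$ has length $5$, and $d(x',y')=3$ when $y'$ is the point of $yy'$ nearest to $x'$. For non-adjacency in Step~2, use the opposite pair $x',y$ or $x,y'$ instead.
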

\begin{proof}
	Since the embedding is pseudo-isometric, \(x\) and \(y\) are at distance 3 from each other. Let \(x'\) and \(y'\) be the unique points of \(\Oct\) such that \(x\col x' \col y' \col y\). Since \(x'\in x^\polarity\) and \(y'\in y^\polarity\) are collinear, the symps \(x^\polarity\) and \(y^\polarity\) can not be opposite (Corollary~\ref{Cor:NoColInOpp}).
	Let \(x''\) be a point of \(\Oct\) at distance 1 from \(x\) but at distance 4 from \(y\). Since the embedding is pseudo-isometric, \(x''\in x^\polarity\) and \(y\in y^\polarity\) are opposite and hence these symps are neither adjacent (Corollary~\ref{Cor:NoOppInAdj}). Similarly there exists a point \(y''\in y^\polarity\) collinear to \(y\) opposite \(x\).   
	Assume that there is a unique point \(z\) in both \(x^\polarity\) and \(y^\polarity\). Since \(x\) is opposite  \(y''\in y^\polarity\), the point \(x\) cannot be close to \(y^\polarity\) (Lemma~\ref{Lem:PS}). Therefore, \(z\) is the unique point in \(y^\polarity\) symplectic to \(x\). Since \(x\) and \(y\) are special, we see that \(y\) needs to be collinear to \(z\) (Lemma~\ref{Lem:PS}). This however contradicts the fact that \(x^\polarity\) contains \(x''\) opposite \(y\).
\end{proof}

\begin{lemma}\label{Lem:OppToOpp}
	Let \(x\) and \(y\) be two points of \(\Oct\) which are opposite. Then \(x^\polarity\) and \(y^\polarity\) are also opposite.
\end{lemma}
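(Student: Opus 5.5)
The plan is to show that \(x^\polarity\) and \(y^\polarity\) are not equal, adjacent, symplectic or special, so that by Lemma~\ref{Lem:SympSymp} they must be opposite. Since the embedding is pseudo-isometric, \(d(x,y)=4\). The incidence graph of \(\Oct\) has diameter \(8\), so \(x\) and \(y\) are at incidence distance \(8\) and every line through \(x\) is at incidence distance \(7\) from \(y\). Hence every line \(L\) of \(\Oct\) through \(x\) contains a unique point \(x_L\) with \(d(x_L,y)=3\), and every other point of \(L\) is at distance \(4\) from \(y\). Symmetrically, every line \(M\) through \(y\) contains a unique point \(y_M\) with \(d(y_M,x)=3\). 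For each \(L\) there is a unique line \(M(L)\) through \(y\) with \(d(x_L,y_{M(L)})=2\): it is the line through \(y\) on the path from \(x_L\) to \(y\). The map \(L\mapsto M(L)\) is a bijection, because the octagon path from \(x\) to \(y\) through \(L\) is unique.

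\emph{Not equal and not adjacent.} We have \(x\in x^\polarity\), \(y\in y^\polarity\) and \(x\opp y\). Corollary~\ref{Cor:NoOppInAdj} gives at once that the two symps are neither adjacent nor equal.

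\emph{Not special.} Suppose they are special, and let \(\Sigma\) be the unique symp adjacent to both. For each line \(L\) through \(x\), the points \(x_L\in x^\polarity\) and \(y_{M(L)}\in y^\polarity\) are at distance \(2\), hence symplectic. By Lemma~\ref{Lem:specialplanes}, at least one of them lies in \(\Sigma\), so it lies in the plane \(\Sigma\cap x^\polarity\) or in the plane \(\Sigma\cap y^\polarity\). Two distinct points \(x_L,x_{L'}\) are at distance \(2\), hence symplectic and not collinear. So a plane contains at most one of the \(x_L\), and likewise at most one of the \(y_M\). This covers at most two of the \(t+1\ge 3\) lines \(L\), a contradiction.

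\emph{Not symplectic.} This is the step I expect to be the main obstacle. Suppose \(x^\polarity\cap y^\polarity=\{z\}\). Since \(y\in y^\polarity\) is opposite \(x\in x^\polarity\), the point \(y\) is neither in \(x^\polarity\) nor close to it, so it is far from \(x^\polarity\) (Lemma~\ref{Lem:PS}). Thus there is a unique symp through \(y\) meeting \(x^\polarity\) in a single point \(p\). The symp \(y^\polarity\) is such a symp, so \(p=z\). By Lemma~\ref{Lem:PS}, the points of \(x^\polarity\) special to \(y\) are collinear to \(z\); in particular \(z\col x_L\) for every line \(L\) through \(x\). Symmetrically, \(z\col y_M\) for every line \(M\) through \(y\). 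Now pick \(L\) and a line \(M\ne M(L)\) through \(y\). Then \(d(x_L,y_M)=4\): a shorter path would, together with the geodesics from \(x_L\) and from \(y_M\), produce a cycle of length less than \(16\) in the incidence graph, or force \(M=M(L)\). The detail to check is this girth count, with \(d(x_L,y)=3\), \(d(y_M,x)=3\) and \(d(x,y)=4\).

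Pseudo-isometry then gives \(x_L\opp y_M\). But \(x_L\col z\col y_M\), and two points with a common collinear point are not opposite (Lemma~\ref{Lem:PP}), a contradiction. Hence \(x^\polarity\) and \(y^\polarity\) are opposite.
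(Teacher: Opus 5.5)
Your proof is correct and follows the paper's structure. Opposite points $x\in x^\polarity$, $y\in y^\polarity$ rule out equality and adjacency, and the special case is excluded exactly as in the paper, via Lemma~\ref{Lem:specialplanes} and a pigeonhole argument over three lines through $x$. The only variation is in the symplectic case: the paper observes that every point of a line of $\Oct$ through $x$ is opposite some point of $y^\polarity$, hence not collinear to $z$, which contradicts the polar-space property of $x^\polarity$. You instead use the far-case description in Lemma~\ref{Lem:PS} to obtain $x_L\col z\col y_M$ for an opposite pair. Both arguments are valid, and your girth count $d(x_L,y_M)=4$ for $M\neq M(L)$ is correct.
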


\begin{proof}
	Since $x^\polarity$ and $y^\polarity$ contain respective opposite points, we see that they are either symplectic, special or opposite. 
	Assume first, for a contradiction, that they are symplectic and let $z$ be their unique common point. Since every point of $\Oct$ collinear to $x$ is at distance~$4$ to at least one point of $\Oct$ at distance~$1$ from $y$, no point of $\Oct$ in $x^\polarity$ is collinear to $z$ (Lemma~\ref{Lem:PS}), a contradiction since every line in $x^\polarity$ contains at least one point collinear to $z$.
	
	Now assume, for a final contradiction, that $x^\polarity$ and $y^\polarity$ are special. 
	Let \(\Sigma\) be the unique symp adjacent to both \(x^\polarity\) and \(y^\polarity\).
	Set $\pi_x:=x^\polarity\cap\Sigma$ and $\pi_y=y^\polarity\cap \Sigma$. Let $L_i$, $i=1,2,3$, be three lines of $\Oct$ through $x$ and let $a_i\in L_i$ be the unique point at distance~$3$ from $y$. Let $b_i$ be the unique point of $\Oct$ collinear to $y$ and at distance $2$ from $a_i$. Then $a_i\sympl b_i$. Lemma~\ref{Lem:specialplanes} yields $a_i\in \pi_x$ or $b_i\in\pi_y$. So, without loss of generality, we have $a_1,a_2\in\pi_x$, contradicting the fact that $a_1$ and $a_2$ are symplectic because they are at distance~$2$ in $\Oct$.  
\end{proof}	

Combining the four previous lemmas, we obtain the following consequence.	

\begin{corollary}\label{Cor:dualiso}
	Two points \(x\) and \(y\) of \(\Oct\) are collinear, symplectic, special or opposite if, and only if, the symps \(x^\polarity\) and \(y^\polarity\) are adjacent, symplectic, special or opposite, respectively.
\end{corollary}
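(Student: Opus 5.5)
The corollary should follow from the four preceding lemmas by a partition argument. The forward implications are already in hand. If \(x\) and \(y\) are collinear, then \(x^\polarity\) and \(y^\polarity\) are adjacent by the lemma on collinear points. If they are symplectic, the symps are symplectic by Lemma~\ref{Lem:SymplToSympl}. If they are special, the symps are special by Lemma~\ref{Lem:SpecToSpec}. If they are opposite, the symps are opposite by Lemma~\ref{Lem:OppToOpp}.

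For the reverse implications, I would use that both sides come with a classification into four exhaustive and mutually exclusive cases.

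\begin{itemize}
\item For two distinct points \(x,y\) of \(\Oct\), Lemma~\ref{Lem:PP} says they are in exactly one of the relations collinear, symplectic, special or opposite.
\item For two distinct symps, Lemma~\ref{Lem:SympSymp} says they are in exactly one of the relations adjacent, symplectic, special or opposite.
\end{itemize}

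Exclusivity of the symp relations is immediate from their defining properties: they meet in a plane, meet in a point, or do not meet, with the last case split into special and opposite according to whether a common adjacent symp exists.

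The key point is that \(x\neq y\) forces \(x^\polarity\neq y^\polarity\). Indeed, whatever the relation between \(x\) and \(y\), the relevant forward lemma produces a symp relation among adjacent, symplectic, special and opposite, and each of these is only defined for distinct symps. For example, in the collinear case the proof exhibits points \(x'\in x^\polarity\) and \(y'\in y^\polarity\) that are special, so the symps differ.

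Now suppose \(x^\polarity\) and \(y^\polarity\) are, say, adjacent. The pair \((x,y)\) lies in exactly one point relation. If that relation were symplectic, special or opposite, the forward direction would make the symps symplectic, special or opposite, contradicting exclusivity of the symp relations. So \(x\) and \(y\) are collinear. The other three cases are handled the same way, which gives all four equivalences.

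I do not expect a real obstacle here. The only point needing care is to note explicitly that the point relations of Lemma~\ref{Lem:PP} partition the pairs of distinct points, and that the symp relations partition the pairs of distinct symps.
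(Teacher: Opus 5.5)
Your proposal is correct and matches the paper's approach. The paper's justification is just ``combining the four previous lemmas'', meaning the forward implications come from the four lemmas and the converses follow because the relations of Lemma~\ref{Lem:PP} and of Lemma~\ref{Lem:SympSymp} each partition pairs of distinct objects. Your explicit check that \(x\neq y\) forces \(x^\polarity\neq y^\polarity\) is a detail the paper leaves implicit, and you handle it correctly.
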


We have proven everything needed for Theorem~\ref{Thm:StronglyPolarisedEmbedding}.

\begin{remark}
	There is no known example of an embedding which is pseudo-isometric but not strongly polarised. Hence we can conjecture that every pseudo-isometric embedding is strongly polarised.
	Using Theorem~\ref{Thm:StronglyPolarisedEmbedding} it would suffice to show that any pseudo-isometric embedding is polarised, that is,  satisfies hypothesis \((P)\).
\end{remark}
\section{\Dual\ and Ovoidal Embeddings}\label{Sec:Dual+OvoidalEmbedding}

\begin{definition}[\bf \Dual\ Embedding]\label{Def:Dual}
	Let \(\Oct\) be a generalised octagon fully embedded in a metasymplectic space \(\MS\).
	Then, the embedding is called \emph{\dual} if there exists a map \(\polarity\) from the points and lines of \(\Oct\) to the symps and planes of \(\MS\) such that:
	\begin{itemize}
		\item If \(x\) is a point of \(\Oct\) then \(x^\polarity\) is a symp containing all points at distance~1 from \(x\).
		\item If \(L\) is a line of \(\Oct\) then \(L^\polarity\) is a plane and for every point \(x\) on \(L\), the symp \(x^\polarity\) contains \(L^\polarity\).
		\item The set of all the symps in the image of \(\polarity\) together with the set of all planes in the image of \(\polarity\) forms a generalised octagon \(\overline{\Oct}\) fully embedded in \(\overline{\MS}\).
	\end{itemize}
\end{definition}

\begin{remark}
	There are examples of strongly polarised embeddings that are not \dual:
	Let \(\Oct\) be a finite Ree-Tits octagon associated to the finite field $\mathbb{F}_q$, $q=2^{2e+1}$, $e\geq 0$, naturally embedded in a metasymplectic space \(\MS\) (see Section~\ref{Sec:ReeTits}). Then \(\MS\) is naturally associated to the Chevalley group of type $\mathsf{F_4}(q)$ and hence fully embedded in the  larger metasymplectic space \(\MS'\) naturally associated to the twisted Chevalley group $\mathsf{^2E_6}(q)$. Then, \(\Oct\) is strongly polarised but not \dual\ in \(\MS'\) since lines of $\overline{\MS'}$ have $q^2+1$ points and those of $\Oct$ only $q+1$.  Note that \(\Oct\) is strongly polarised and \dual\ in \(\MS\), see Section~\ref{Sec:ReeTits}.
\end{remark}
The last notion we need to apply Lemma~\ref{Cor:KeyCor} is that of ovoidal embedding. 
\begin{definition}[\bf Ovoidal Embedding]\label{Def:Ovoidal}
	Let \(\Oct\) be a generalised octagon fully embedded in a metasymplectic space \(\MS\) such that the embedding is \dual.
	Then, the embedding of \(\Oct\) is \emph{ovoidal} if for any point \(x\) of \(\Oct\) the following holds:
	\begin{itemize}
		\item The lines of \(\Oct\) in the \(\{x,x^\polarity\}\)-GQ form an ovoid of that generalised quadrangle.
		\item The planes of \(\overline{\Oct}\) in the\(\{x,x^\polarity\}\)-GQ form an ovoid in the dual generalised quadrangle.
	\end{itemize}
\end{definition}
Finally, one of the main hypotheses of the present paper is the following.

\begin{definition}[\bf Dense Embedding]\label{Def:Dense}
Let \(\Oct\) be a generalised octagon fully embedded in a metasymplectic space \(\MS\). Then, the embedding of \(\Oct\) is called \emph{dense}, if 
\begin{itemize}
		\item[\((D)\)] every symp of \(\MS\) contains at least one point of \(\Oct\).
	\end{itemize}
\end{definition}
 
The goal of this section is to show the following:

\begin{theorem}\label{Thm:Dual+OvoidalEmbedding}
	Let \(\Oct\) be a generalised octagon fully embedded in a metasymplectic space \(\MS\) such that the embedding is pseudo-isometric and strongly polarised.
	Then, the embedding of \(\Oct\) in \(\MS\) is \dual\ and ovoidal if, and only if, it is dense.	
\end{theorem}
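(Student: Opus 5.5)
We prove both directions separately; the forward direction (\dual\ and ovoidal implies dense) is the short one. Let \(\Sigma\) be any symp. Since \(\overline{\Oct}\) is a generalised octagon whose points are symps \(x^\polarity\), we compare \(\Sigma\) with the symps of \(\overline{\Oct}\). Choose \(x^\polarity\) in \(\overline{\Oct}\) at minimal symp-distance from \(\Sigma\) in the dual metasymplectic space. If \(\Sigma\) is adjacent to or equal to some \(x^\polarity\), then \(\Sigma\cap x^\polarity\) is a plane \(\pi\) of \(x^\polarity\). The lines of \(\Oct\) through \(x\) form an ovoid \(O_x\) of the \(\{x,x^\polarity\}\)-GQ, and the planes of \(\overline{\Oct}\) through \(x\) form a dual ovoid. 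If \(\pi\) contains \(x\), then \(\pi\) is a plane of that GQ and so contains a line of \(O_x\); hence \(\pi\), and thus \(\Sigma\), contains points of \(\Oct\). If \(x\notin\pi\), we use that every plane of the polar space \(x^\polarity\) meets \(x^\perp\) in a line through a point collinear with \(x\), and argue through the neighbouring points. For the remaining relations (symplectic, special, opposite), we use the dual of the whole Section~\ref{Sec:PseudoIsoEmbedding}/\ref{Sec:StronglyPolarisedEmbedding} machinery: \(\overline{\Oct}\) is fully embedded in \(\overline{\MS}\), and the ovoid conditions give the dual hypotheses. We then reduce to the adjacent case by walking along a symp path in \(\overline{\MS}\) and using Corollary~\ref{Cor:PlaneSymp}.

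For the converse (dense implies \dual\ and ovoidal), we first construct \(L^\polarity\). For a line \(L=xy\) of \(\Oct\), the symps \(x^\polarity\) and \(y^\polarity\) are adjacent by Corollary~\ref{Cor:dualiso}, and we set \(L^\polarity:=x^\polarity\cap y^\polarity\). Next we show that \(z^\polarity\supseteq L^\polarity\) for every \(z\in L\). The three symps \(x^\polarity, y^\polarity, z^\polarity\) pairwise share planes containing \(L\). Two of them, say \(x^\polarity\) and \(z^\polarity\), contain points \(x',z'\) of \(\Oct\) at distance \(3\), which are special. We combine this with Lemma~\ref{Lem:adjacentspecial} and the residue at \(L\), which is a dual polar space of rank \(2\) in the \(x\)-residue. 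This should force the three planes to coincide.

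We then check that the symps \(x^\polarity\) and planes \(L^\polarity\) form a generalised octagon. The incidence graph has the same structure as \(\Oct\) via \(\polarity\), which is injective by Corollary~\ref{Cor:dualiso}. The remaining content is that every point of \(\overline{\MS}\) on a line of \(\overline{\MS}\) arising as \(L^\polarity\) (i.e.\ every symp through \(L^\polarity\)) is some \(z^\polarity\), which is fullness. Here density enters: a symp \(\Sigma\supseteq L^\polarity\) contains a point \(w\) of \(\Oct\) by \((D)\). We must show that \(\Sigma=z^\polarity\) for some \(z\in L\), and we plan to do this by analysing \(d(w,L)\) with the pseudo-isometry, excluding the far cases via opposite points in \(x^\polarity\) and \(y^\polarity\).

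Ovoidality also comes from density. Let \(\pi\) be a plane through \(x\) in \(x^\polarity\) and suppose it contains no line of \(\Oct\). Among the symps through \(\pi\) other than \(x^\polarity\), pick one, \(\Sigma\). By \((D)\) it contains a point \(w\in\Oct\), and \(w\) is at some distance from \(x\). Since every point of \(\Sigma\) is collinear or symplectic to \(x\), \(w\) is at distance at most \(2\) from \(x\), and Lemma~\ref{Lem:LinesNotPlanar} together with the pseudo-isometry locate \(w\). We expect the conclusion that some line of \(\Oct\) through \(x\) lies in \(\pi\); uniqueness of that line is Lemma~\ref{Lem:LinesNotPlanar}. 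The dual ovoid property is analogous, using the fullness of \(\overline{\Oct}\).

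The main obstacle is the fullness step. Showing that every symp through \(L^\polarity\) is some \(z^\polarity\) requires a careful case analysis of where the density point \(w\) sits relative to the octagon, possibly iterating \((D)\) on several auxiliary symps; this is what I would attack first.
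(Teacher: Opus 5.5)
There is a genuine gap. The clearest instance is in the converse direction: each time you invoke $(D)$, you apply it to a symp that already contains points of $\Oct$, so density tells you nothing. A symp $\Sigma\supseteq L^\polarity$ contains $L$, so the point $w$ supplied by $(D)$ may lie on $L$, and there is no $d(w,L)$ to analyse. Likewise, a symp $\Sigma\supseteq\pi\ni x$ contains $x$, so in your ovoid argument $w$ may be $x$. If you could force $w\neq x$, that argument would in fact close: $d(x,w)=1$ gives $w\in x^\polarity\cap\Sigma=\pi$, and $d(x,w)=2$ gives $\Sigma=\sympThrough{x}{w}=u^\polarity$ for the common neighbour $u$, so $u\in x^\polarity\cap u^\polarity=\pi$.

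The missing idea is to apply $(D)$ to auxiliary symps chosen to avoid the known points.
\begin{itemize}
\item For the ovoid, the paper takes a line $M\subseteq\pi$ with $x\notin M$ and a symp through $M$ not containing $x$. Its density point $y\notin M$ cannot be special to $x$, since then $y$ would be close to $x^\polarity$ yet opposite a neighbour of $x$. So $y$ is symplectic to $x$, and the common neighbour of $x$ and $y$ is the required point on $M$.
\item For fullness, the paper takes a plane $\alpha\subseteq\Sigma$ with $\alpha\cap L^\polarity=\{p\}$, $p\notin L$, and a second symp $\Sigma'\supseteq\alpha$. A density point $a\in\Sigma'\setminus\alpha$ must be collinear to $p$, since otherwise every point of $L$ is special to $a$ by Lemma~\ref{Lem:adjacentspecial}. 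It is collinear to no point of $L$, hence symplectic to a unique $z\in L$. The common neighbour $w$ of $a$ and $z$ then has $w^\polarity\supseteq pz$, so $w^\polarity$ is adjacent to both $x^\polarity$ and $y^\polarity$, forcing $w\in L$, which is a contradiction. Hence $\alpha$ contains a point of $\Oct$; that point is at distance $1$ from a unique $z\in L$, giving $\Sigma=z^\polarity$.
\end{itemize}

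Several further steps are only asserted.

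\emph{Common plane.} The claim $z^\polarity\supseteq L^\polarity$ for all $z\in L$ is left at ``this should force the three planes to coincide.'' You do not say how the special pairs and Lemma~\ref{Lem:adjacentspecial} would locate $z^\polarity$ in the projective plane of symps through $L$. In the paper this is proved \emph{after}, and by means of, the fullness claim. For any $u,v\in L$, all symps through $u^\polarity\cap v^\polarity$ are images, so two distinct such planes would make the image set a subspace of that projective plane containing a triangle. Then every symp through $L$ is some $t^\polarity$. Now take a neighbour $q\notin L$ of $x$, and the plane of $x^\polarity$ through $L$ meeting $q^\polarity\cap x^\polarity$ in a line. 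Every symp other than $x^\polarity$ through that plane is some $t^\polarity$ with $t\in L\setminus\{x\}$ and is adjacent to $q^\polarity$, so $t$ would be collinear to $q$, which is absurd. Your order (common plane first, density only afterwards) leaves this step unsupported.

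\emph{Forward direction.} The reduction of the symplectic, special and opposite cases to the adjacent one is missing. Dualising Sections~\ref{Sec:PseudoIsoEmbedding}--\ref{Sec:StronglyPolarisedEmbedding} only concerns relations among symps of $\overline{\Oct}$, not an arbitrary $\Sigma$. The paper proceeds as follows:
\begin{itemize}
\item Corollary~\ref{Cor:PlaneSymp}, applied to some $L^\polarity$ (all symps through which lie in $\overline{\Oct}$), yields a symp of $\overline{\Oct}$ adjacent or special to $\Sigma$.
\item Lemma~\ref{Lem:OvoidalPointInPlane}, Lemma~\ref{Lem:CloseLine}, Lemma~\ref{Lem:nondisjointsymps} and the dual-ovoid property then produce a symp of $\overline{\Oct}$ adjacent to $\Sigma$.
\item Only then does your adjacent-case argument finish.
\end{itemize}

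\emph{Dual ovoid.} This is not simply ``analogous.'' The paper needs $(D)$ once more, on a symp through a point $z\neq x$ of the given line that is symplectic to $x^\polarity$.
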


\subparagraph{}
We first assume that \(\Oct\) is a generalised octagon which is fully embedded in a metasymplectic space \(\MS\) such that the embedding is pseudo-isometric, strongly polarised, \dual\ and ovoidal. Let \(\polarity\) be the map from the points of \(\Oct\) to the symps of \(\MS\) which maps each point \(x\) to the unique symp containing all points at distance~1 from \(x\).
Since the embedding is pseudo-isometric, it is clear that no symp of \(\MS\) can contain more than the set of points at distance 1 from a point of \(\Oct\). We now show that every symp of \(\MS\) contains at least one point of \(\Oct\). 

\begin{lemma}\label{Lem:OvoidalPointInPlane}
	Let \(x^\polarity\) be a symp of \(\overline{\Oct}\). Then, every plane \(\pi\) of \(x^\polarity\) contains at least one point of \(\Oct\).
\end{lemma}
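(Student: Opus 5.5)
Let \(\pi\) be a plane of the symp \(x^\polarity\). The plan is to split into two cases, according to whether \(\pi\) contains \(x\) or not, and in each case to use the two ovoid conditions of Definition~\ref{Def:Ovoidal} in suitable \(\{y,y^\polarity\}\)-GQs.

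\textbf{Case \(x\in\pi\).} In the \(\{x,x^\polarity\}\)-GQ, the plane \(\pi\) is a line. The lines of \(\Oct\) through \(x\) form an ovoid of this GQ. Hence \(\pi\) contains exactly one line of \(\Oct\) through \(x\), and so contains points of \(\Oct\), for instance \(x\) itself.

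\textbf{Case \(x\notin\pi\).} Since \(\pi\) is a plane of the rank~\(3\) polar space \(x^\polarity\), the points of \(\pi\) collinear to \(x\) form a line \(M\). Then \(\pi'=\langle x,M\rangle\) is a plane of \(x^\polarity\) through \(x\). By the first case, \(\pi'\) contains a unique line \(L\) of \(\Oct\) through \(x\). The line \(L\) meets \(M\) in a point \(y\), since both are lines of the plane \(\pi'\). Because the embedding is full, \(y\) is a point of \(\Oct\). If \(y=x\), then \(x\) would lie on \(M\subseteq\pi\), which is excluded. So \(y\in\pi\) is a point of \(\Oct\), as required.

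The step to verify most carefully is the claim that \(x\in\pi\) gives a line of the \(\{x,x^\polarity\}\)-GQ and that the ovoid meets it. This is immediate from the definition of an ovoid: every line of the GQ contains exactly one point of the ovoid. Note that the argument only uses the first ovoid condition together with fullness. The second condition, on the planes of \(\overline{\Oct}\), and the \dual\ structure are not needed here.
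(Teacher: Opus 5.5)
Your proof is correct and follows essentially the same argument as the paper. The paper also takes the line of \(\pi\) consisting of points collinear to \(x\), applies the ovoid condition in the \(\{x,x^\polarity\}\)-GQ to the plane this line spans with \(x\), and uses fullness to get a point of \(\Oct\) on that line. Your explicit case \(x\in\pi\), which the paper leaves implicit, is a harmless and slightly more careful addition.
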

\begin{proof}
	It is easy to see that there exists a line \(L\) in \(\pi\) such that all points on \(L\) are collinear to \(x\). Since the embedding is ovoidal, \(\langle x, L\rangle\) contains a line of \(\Oct\). Hence, \(L\), and therefore also \(\pi\), contains at least one point of \(\Oct\).
\end{proof}

This result implies the following dual result.
\begin{corollary}\label{Cor:OvoidalSympThroughLine}
	Let \(x\) be a point of \(\Oct\). Then, every line \(L\) of \(\MS\) through \(x\) is contained in at least one symp of \(\overline{\Oct}\).
\end{corollary}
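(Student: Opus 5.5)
The plan is to obtain Corollary~\ref{Cor:OvoidalSympThroughLine} from Lemma~\ref{Lem:OvoidalPointInPlane} by duality, not by a new argument. By Corollary~\ref{Cor:dualMS}, the dual \(\overline{\MS}\) of \(\MS\) is again a metasymplectic space. In \(\overline{\MS}\) the roles are swapped as follows: symps of \(\MS\) become points, planes become lines, lines become planes, and points become symps.

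By the definition of a \dual\ embedding, \(\overline{\Oct}\) is a generalised octagon fully embedded in \(\overline{\MS}\). Its points are the symps \(y^\polarity\) and its lines are the planes \(L^\polarity\).

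The first step is to check that the embedding of \(\overline{\Oct}\) in \(\overline{\MS}\) satisfies the same standing hypotheses as \(\Oct\) in \(\MS\): it should be pseudo-isometric, strongly polarised, \dual\ and ovoidal, with \(\overline{\overline{\Oct}}=\Oct\).

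\begin{itemize}
\item \textbf{Pseudo-isometric and strongly polarised.} The polarity of \(\overline{\Oct}\) is the inverse map \(x^\polarity\mapsto x\). The point \(x\), seen as a symp of \(\overline{\MS}\), contains all symps \(y^\polarity\) with \(y\) collinear to \(x\) in \(\Oct\). This holds because \(x\in y^\polarity\) whenever \(d(x,y)=1\). Corollary~\ref{Cor:dualiso} transfers the four relations between points of \(\Oct\) to the corresponding relations between the symps \(x^\polarity\). This gives pseudo-isometry of \(\overline{\Oct}\) in \(\overline{\MS}\) and strong polarisation at the same time. Here we use that distances in \(\overline{\Oct}\) agree with those in \(\Oct\), since \(\polarity\) is an isomorphism onto \(\overline{\Oct}\).
\item \textbf{\Dual.} The dual map sends each line \(L^\polarity\) of \(\overline{\Oct}\) back to the line \(L\) of \(\Oct\).
\item \textbf{Ovoidal.} Definition~\ref{Def:Ovoidal} is self-dual: its two bullet conditions are interchanged under duality. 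The \(\{x,x^\polarity\}\)-GQ of \(\MS\) is the dual of the corresponding GQ of the flag \(\{x^\polarity,x\}\) in \(\overline{\MS}\).
\end{itemize}

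The second step is to apply Lemma~\ref{Lem:OvoidalPointInPlane} to \(\overline{\Oct}\subseteq\overline{\MS}\). The lemma says that every plane of a symp of \(\overline{\Oct}\) contains a point of \(\overline{\Oct}\).

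Now translate back. A symp of \(\overline{\Oct}\) is a point \(x\) of \(\Oct\). A plane of \(\overline{\MS}\) incident with it is a line \(L\) of \(\MS\) through \(x\). A point of \(\overline{\Oct}\) in that plane is a symp \(y^\polarity\) containing \(L\). This is exactly the statement of Corollary~\ref{Cor:OvoidalSympThroughLine}.

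The main obstacle is the first step, namely making sure that every hypothesis actually used in the proof of Lemma~\ref{Lem:OvoidalPointInPlane} dualises. Two ingredients need checking:

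\begin{itemize}
\item \textbf{Existence of \(L\).} The proof uses that in a plane \(\pi\) of \(x^\polarity\) there is a line all of whose points are collinear to \(x\). The dual statement is that for a line \(L\) through \(x\) there is a plane through \(L\) whose symps are all adjacent to \(x^\polarity\), or dually meet \(x^\polarity\) in a plane. This follows from the polar-space structure of the residue of \(x\), via (MS2).
\item \textbf{Ovoid condition.} The proof uses the first ovoid condition, which dualises to the second. This second condition is assumed.
\end{itemize}

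If the transfer of hypotheses seems delicate, I would instead argue directly. Take the dual polar space residue of \(x\), and choose a plane \(\alpha\) through \(L\) that lies in \(x^\polarity\). By the second ovoid condition, the pencil of planes through \(x\) in a suitable line-plane flag of \(x^\polarity\) contains a plane \(M^\polarity\) of \(\overline{\Oct}\), where \(M\) is a line of \(\Oct\) through \(x\). Then the symp \(y^\polarity\) for a suitable \(y\in M\) contains \(L\).
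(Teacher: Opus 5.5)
Your duality argument is correct and is the paper's route: the paper simply states that the corollary is the dual of Lemma~\ref{Lem:OvoidalPointInPlane}, and your checks that \(\overline{\Oct}\subseteq\overline{\MS}\) is again pseudo-isometric, strongly polarised, symmetric and ovoidal (with \(\overline{\overline{\Oct}}=\Oct\)) make explicit what the paper leaves implicit. One correction to your optional direct fallback: a plane through \(L\) inside \(x^\polarity\) exists only in the trivial case \(L\subseteq x^\polarity\), so in general you should instead take a plane \(\alpha\supseteq L\) with \(\alpha\cap x^\polarity\) a line \(m\ni x\) (from the polar-space structure of the residue of \(x\)), then the unique \(M^\polarity\supseteq m\) given by the second ovoid condition, and finally the symp through both \(M^\polarity\) and \(\alpha\), which is some \(y^\polarity\) by fullness of \(\overline{\Oct}\).
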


\begin{lemma}\label{Lem:ClosePoint}
	For every symp \(\Sigma\) of \(\MS\) there exists a point of \(\Oct\) which is close to \(\Sigma\).
\end{lemma}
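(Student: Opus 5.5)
The plan is to reduce the statement to the claim that every point \(p\) of \(\MS\) is equal or collinear to some point of \(\Oct\), and then to handle separately the case where the point of \(\Oct\) so obtained lies in \(\Sigma\).

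\textbf{Step 1: \(\Sigma\) contains a point \(y\) of \(\Oct\).} Since the embedding is pseudo-isometric, \(\Sigma\cap\PO\) is contained in the set of points at distance at most~\(1\) from a single point of \(\Oct\).
\begin{itemize}
\item If \(\Sigma\neq y^\polarity\), I would use that the lines of \(\Oct\) through \(y\) form an ovoid of the \(\{y,y^\polarity\}\)-GQ. The set \(\Sigma\cap y^\polarity\) is at most a plane (MS3). By Lemma~\ref{Lem:LinesNotPlanar}, a plane contains at most one line of \(\Oct\) through \(y\). So some line \(L\) of \(\Oct\) through \(y\) is not contained in \(\Sigma\). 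Since \(L\) is a line of \(\MS\) meeting \(\Sigma\), it meets \(\Sigma\) only in \(y\). Then any point \(z\in L\setminus\{y\}\) lies outside \(\Sigma\) and is collinear to \(y\in\Sigma\), so \(z\) is close to \(\Sigma\) by Lemma~\ref{Lem:PS}.
\item If \(\Sigma=y^\polarity\), take \(w\) collinear to \(y\) in \(\Oct\) and a point \(z\) of \(\Oct\) collinear to \(w\) with \(d(y,z)=2\). Then \(z\notin y^\polarity\), since \(y^\polarity\) contains only \(y\) and its neighbours in \(\Oct\). As \(z\) is collinear to \(w\in\Sigma\), \(z\) is close to \(\Sigma\).
\end{itemize}

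\textbf{Step 2: reduction to a covering property.} Let \(p\) be any point of \(\Sigma\). If \(p\) is equal or collinear to a point \(y\) of \(\Oct\), then either \(y\in\Sigma\) and Step~1 applies, or \(y\notin\Sigma\) and \(y\) is close to \(\Sigma\) because it is collinear to \(p\in\Sigma\). So it suffices to show that every point of \(\MS\) is equal or collinear to a point of \(\Oct\).

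By Lemma~\ref{Lem:OvoidalPointInPlane}, this holds for every point \(p\) lying in some symp \(x^\polarity\) of \(\overline{\Oct}\). Indeed, any plane of \(x^\polarity\) through \(p\) contains a point of \(\Oct\), and two points of a plane are equal or collinear. So I would prove that the symps of \(\overline{\Oct}\) cover \(\PMS\), by induction along a path in the (connected) collinearity graph of \(\MS\) starting at a point of \(\Oct\).

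\textbf{Step 3: the inductive step, the expected main obstacle.} Suppose \(p'\in x^\polarity\) and \(p\col p'\) with \(p\notin x^\polarity\). Then \(p\) is close to \(x^\polarity\) and is collinear to a line \(M\subseteq x^\polarity\) through \(p'\). It suffices to find a point \(y\) of \(\Oct\) on \(M\). Then \(py\) is a line through a point of \(\Oct\), so by Corollary~\ref{Cor:OvoidalSympThroughLine} it lies in a symp of \(\overline{\Oct}\), which then contains \(p\).

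To find \(y\), I would first try the argument of Lemma~\ref{Lem:OvoidalPointInPlane} applied directly to \(M\). If \(x\in M\) we are done. Otherwise, if \(M\) is collinear to \(x\), the plane \(\langle x,M\rangle\) contains a line of \(\Oct\) through \(x\) by the ovoid property, and that line meets \(M\). If \(M\) is not collinear to \(x\), I would replace \(p'\) by another point of \(M\) and use the dual ovoid condition (the planes of \(\overline{\Oct}\) through \(x\)) to choose a different symp of \(\overline{\Oct}\) containing \(M\), one whose defining point of \(\Oct\) is collinear to \(M\). Making this choice work in all configurations is where I expect the real difficulty to lie.
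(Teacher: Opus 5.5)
Your Steps 1 and 2 are sound. Step 1 correctly produces a point of $\Oct$ close to $\Sigma$ whenever $\Sigma$ meets $\PO$. The reduction to ``every point of $\MS$ lies in a symp of $\overline{\Oct}$'' is also valid.

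\textbf{The gap is Step 3.} It is not merely unfinished: its target is false in general.
\begin{enumerate}
\item By pseudo-isometry, $x^\polarity\cap\PO$ consists only of $x$ and its $\Oct$-neighbours, and these lie on the lines of $\Oct$ through $x$. You use this fact yourself in Step 1.
\item So take a line $M\subseteq x^\polarity$ with $x\notin M$ and $M$ not collinear to $x$, and let $q$ be its unique point collinear to $x$. Then $M$ contains a point of $\Oct$ only if $xq\in\LO$. This fails whenever $xq$ is not one of the ovoid lines.
\item The configuration genuinely occurs. Take $p$ in a plane through $M$ that meets $x^\polarity$ only in $M$; then $p$ is close to $x^\polarity$ with exactly this $M$, and you do not control which symp of $\overline{\Oct}$ through $p'$ the induction hands you.
\item Moving to another symp containing $M$ cannot help. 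If $u\in\PO$ with $u$ collinear to all of $M$ and $M\subseteq u^\polarity$, then either $u\in M$, or the ovoid at $u$ puts a line of $\Oct$ through $u$ into the plane $\langle u,M\rangle$. Either way $M$ would contain a point of $\Oct$.
\end{enumerate}
So the inductive step has no mechanism in its main case. Note also that in the paper the covering property is deduced \emph{from} this lemma (via Lemmas~\ref{Lem:CloseLine} and~\ref{Lem:nondisjointsymps}), not the other way round.

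\textbf{The missing idea is to use a plane $L^\polarity$ rather than a covering argument.} Fix any $L\in\LO$. Since $\overline{\Oct}$ is full in $\overline{\MS}$, every symp through $L^\polarity$ equals $y^\polarity$ for some $y\in L$. By Corollary~\ref{Cor:PlaneSymp}, some $y^\polarity\neq\Sigma$ is adjacent to $\Sigma$ or special to $\Sigma$.
\begin{itemize}
\item \emph{Adjacent, with $y\notin\Sigma$.} In the polar space $y^\polarity$, the point $y$ is collinear to a line of the plane $y^\polarity\cap\Sigma$. Hence $y$ is close to $\Sigma$ by Lemma~\ref{Lem:PS}.
\item \emph{Adjacent, with $y\in\Sigma$.} Your Step 1 finishes this case. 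It is exactly the subcase the paper's Case 1 passes over.
\item \emph{Special.} Let $\Sigma'$ be the symp adjacent to both $y^\polarity$ and $\Sigma$. Lemma~\ref{Lem:OvoidalPointInPlane} gives $z\in\PO$ in the plane $y^\polarity\cap\Sigma'$. Special symps are disjoint, so $z\notin\Sigma$, and this plane is disjoint from $\Sigma\cap\Sigma'$. In the rank~3 polar space $\Sigma'$, $z$ is therefore collinear to a line of $\Sigma\cap\Sigma'$, so $z$ is close to $\Sigma$.
\end{itemize}
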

\begin{proof}
	Let \(L\) be a line of \(\Oct\). Either at least one of the symps through \(L^\polarity\) is adjacent to \(\Sigma\) or at least one of these symps through \(L^\polarity\) is special to \(\Sigma\) (Corollary~\ref{Cor:PlaneSymp}). Observe that, since the embedding of $\bar{\Oct}$ is full in \(\bar{\MS}\),  all these symps are in \(\overline{\Oct}\).
	
	\paragraph{\textbf{Case $1$}: There exists a symp \(y^\polarity\) of \(\overline{\Oct}\) adjacent to \(\Sigma\)} Then, $y$ is close to $\Sigma$.
	
	\paragraph{\textbf{Case $2$}: There exists a symp \(y^\polarity\) of \(\overline{\Oct}\) special to \(\Sigma\)} 
	Let \(\Sigma'\) be the unique symp adjacent to both \(y^\polarity\) and \(\Sigma\).
	Since \(\Oct\) is ovoidal, the plane \(y^\polarity\cap \Sigma'\) contains a point \(z\) of \(\Oct\) (Lemma~\ref{Lem:OvoidalPointInPlane}). Clearly, $z$ is close to $\Sigma$.
\end{proof}

\begin{lemma}\label{Lem:CloseLine}
	For every symp \(\Sigma\) of \(\MS\) there exists a line \(L\) of \(\Oct\) and a point \(x\) in \(\Sigma\) such that \(x\) is collinear to every point of \(L\).
\end{lemma}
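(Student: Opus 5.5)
Start from Lemma~\ref{Lem:ClosePoint}: it gives a point \(y\) of \(\Oct\) close to \(\Sigma\). If \(y\in\Sigma\) we are essentially done. In that case take any line \(L\) of \(\Oct\) through \(y\); its points lie in \(\Sigma\), and any \(x\in L\) other than the endpoint in question is collinear to all points of \(L\). So assume \(y\notin\Sigma\), and let \(M\) be the line of \(\Sigma\) consisting of all points of \(\Sigma\) collinear to \(y\). Every plane \(\langle y,M\rangle\) through \(y\) meets \(\Sigma\) in \(M\), so I will work inside the residue of \(y\).

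The plan is to use the ovoidal property at \(y\). The line \(\langle y, m\rangle\) for \(m\in M\) varies over the lines of the plane \(\pi:=\langle y,M\rangle\) through \(y\). I would first show that \(\pi\) lies in \(y^\polarity\), or else replace \(\pi\) suitably. If \(\pi\subseteq y^\polarity\), then \(\pi\) is a plane of the \(\{y,y^\polarity\}\)-GQ. Since the lines of \(\Oct\) through \(y\) form an ovoid of that GQ, the plane \(\pi\) contains a (unique) line \(L\) of \(\Oct\) through \(y\). Then \(L\) meets \(M\) in a point \(x\in\Sigma\). This is not yet the statement, since \(x\in L\) is only collinear to points of \(L\) trivially. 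The correct target is rather a point \(x\in\Sigma\) collinear to all of \(L\), with \(L\cap\Sigma\) being at most \(x\).

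So I will refine the argument. Take \(L\) a line of \(\Oct\) through \(y\) with \(L\) not in \(\pi\). Then the points of \(L\) are close to \(\Sigma\) or far from it. Instead, use the dual statement, Corollary~\ref{Cor:OvoidalSympThroughLine}: the line \(\langle y,m\rangle\) for a point \(m\in M\) lies in some symp \(z^\polarity\) of \(\overline{\Oct}\). Alternatively, and this is the route I would actually pursue, apply Case~2 of Lemma~\ref{Lem:ClosePoint} more carefully. There is a symp \(y^\polarity\) adjacent to \(\Sigma\), or a symp special to \(\Sigma\) with intermediate symp \(\Sigma'\). In the adjacent case, the plane \(\rho:=y^\polarity\cap\Sigma\) is a plane of the symp \(y^\polarity\). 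Using the ovoidal conditions in \(y^\polarity\) (both the ovoid of lines of \(\Oct\) and the dual ovoid of planes of \(\overline{\Oct}\)), I expect to find a line \(L\) of \(\Oct\) through \(y\), with \(L\) in \(y^\polarity\), and a point \(x\in\rho\) collinear to all of \(L\). Concretely, \(y^\perp\cap\rho\) is a line \(N\), or \(y\in\rho\). The ovoid of lines of \(\Oct\) at \(y\) meets every plane of the GQ, hence meets \(\langle y,N\rangle\) in a line \(L\). Then \(L\) and \(N\) span a plane, so \(x:=L\cap N\in\Sigma\). In the polar space \(y^\polarity\), I would then pick instead a point of \(\rho\) collinear to the whole line \(L\), namely any point of \(L^\perp\cap\rho\). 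This set is nonempty because in a rank~3 polar space a line and a plane always have \(L^\perp\cap\rho\) of dimension at least \(0\).

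The main obstacle is this last step and the case split. First, one must handle \(y\in\rho\) (then take \(L\) any line of \(\Oct\) through \(y\) and \(x=y\)). Second, one must check that \(L^\perp\cap\rho\neq\emptyset\) in the polar space \(y^\polarity\). I would argue this by noting that \(L\) meets \(\rho\) or, if not, \(L^\perp\) is a plane-dimension subspace intersecting \(\rho\) nontrivially by dimension count in the rank~3 polar space. In the special case of Lemma~\ref{Lem:ClosePoint}, I would run the same polar-space argument inside \(z^\polarity\) for the point \(z\) found there, with \(\rho\) replaced by the plane \(z^\polarity\cap\Sigma'\). I would then transport to \(\Sigma\) via Lemma~\ref{Lem:PS}, which is the delicate part.
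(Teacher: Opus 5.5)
\paragraph{Verdict} Your proposal has a genuine gap: it proves the lemma only when some symp of $\overline{\Oct}$ meets $\Sigma$ in a plane, and the remaining cases are not handled.

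\paragraph{What works} Your adjacent case is correct. If $\rho=y^\polarity\cap\Sigma$ is a plane and $y\notin\rho$, then $N=y^\perp\cap\rho$ spans with $y$ a plane of $y^\polarity$ through $y$. The ovoid property gives a line $L\in\LO$ through $y$ in that plane, and any point of $N$ works. Two small corrections:
\begin{itemize}
\item The statement allows $x\in L$; Lemma~\ref{Lem:nondisjointsymps} explicitly treats $L\cap\Sigma\neq\varnothing$. So your worry about ``trivial'' collinearity is unfounded.
\item If $y\in\Sigma$, the points of a line of $\Oct$ through $y$ need not lie in $\Sigma$. Simply take $x=y$.
\end{itemize}

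\paragraph{Where it fails} Your first idea, arranging $\langle y,M\rangle\subseteq y^\polarity$, fails in general. When $y^\polarity$ and $\Sigma$ are symplectic, $y^\polarity\cap\Sigma$ is a single point, so $M\not\subseteq y^\polarity$.

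After you switch to the adjacent/special dichotomy from the proof of Lemma~\ref{Lem:ClosePoint}, the special case is never resolved.
\begin{itemize}
\item Your reduction to ``the plane $z^\polarity\cap\Sigma'$'' is unjustified. For $z\in\PO$ in $y^\polarity\cap\Sigma'$ with $z\neq y$ (so $d(y,z)=1$), nothing forces $z^\polarity$ to be adjacent to $\Sigma'$.
\item ``Transport via Lemma~\ref{Lem:PS}'' is not an argument.
\item By using that dichotomy you discard the closeness of $y$, which is exactly what the special case needs. All lines of $\Oct$ in $y^\polarity$ pass through $y$. So if $y\notin y^\polarity\cap\Sigma'$, that plane contains no line of $\Oct$ at all.
\end{itemize}

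\paragraph{The missing idea} Keep the close point $y\notin\Sigma$ from Lemma~\ref{Lem:ClosePoint}, and split on the relation between $y^\polarity$ and $\Sigma$. They are distinct, and by Corollary~\ref{Cor:NoColInOpp} they are not opposite. Besides your adjacent case there are two more.
\begin{itemize}
\item \emph{Symplectic, with common point $x$.} Then $x\col y$: if $x\pperp y$, then $y^\polarity=\sympThrough{x}{y}$ would contain the line $M=y^\perp\cap\Sigma$. By the ovoid property, any plane of $y^\polarity$ through $xy$ contains a line $L\in\LO$ through $y$, and $x$ is collinear to all of $L$.
\item \emph{Special, with $\Sigma'$ adjacent to both.} Put $\pi_1=y^\polarity\cap\Sigma'$ and $\pi_2=\Sigma\cap\Sigma'$. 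These planes are disjoint, since a common point would make $y^\polarity$ and $\Sigma$ adjacent or symplectic.
\begin{itemize}
\item A point of $y^\polarity\setminus\pi_1$ that is close to $\Sigma$ would be symplectic to points of $\Sigma$ outside $\pi_2$. This contradicts Lemma~\ref{Lem:specialplanes}, so $y\in\pi_1$.
\item The ovoid property then gives $L\in\LO$ through $y$ inside $\pi_1$.
\item In the polar space $\Sigma'$, for $p,q\in L$ the sets $p^\perp\cap\pi_2$ and $q^\perp\cap\pi_2$ are lines of the plane $\pi_2$, so they meet. Since $L^\perp=p^\perp\cap q^\perp$, this gives a point $x\in\Sigma$ collinear to all of $L$. (This two-lines argument also replaces your inaccurate ``dimension count'', as $L^\perp$ is not a plane.)
\end{itemize}
\end{itemize}
This is the paper's three-case proof.
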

\begin{proof}
	By Lemma~\ref{Lem:ClosePoint} there exists a point \(y\) of \(\Oct\) close to \(\Sigma\). 
	
	\paragraph{\textbf{Case \(1\)}: \(y^\polarity\) is adjacent to \(\Sigma\)}
	In this case for every line \(L\) of $y^\polarity$ through \(y\) there exists a point \(x\) in \(y^\polarity\cap\Sigma\) which is collinear to all points on \(L\) (and we can choose $L\in\LO$).
	
	\paragraph{\textbf{Case \(2\)}: \(y^\polarity\) is symplectic to \(\Sigma\)}
	Let \(x\) be the unique point in both \(y^\polarity\) and \(\Sigma\) and note that $x\col y$. Since the embedding is ovoidal, any plane through \(xy\) in \(y^\polarity\) contains a line \(L\) of \(\Oct\).
	
	\paragraph{\textbf{Case \(3\)}: \(y^\polarity\) is special to \(\Sigma\)}
	Let \(\Sigma'\) be the unique symp adjacent to both \(y^\polarity\) and \(\Sigma\). The point \(y\) is contained in \(y^\polarity\cap\Sigma' =: \pi\). Since the embedding is ovoidal, this plane \(\pi\) in \(y^\polarity\) through \(y\) contains a line of \(\Oct\).
\end{proof}

\begin{lemma}\label{Lem:nondisjointsymps}
	For every symp \(\Sigma\) of \(\MS\) there exists a symp $\Sigma'$ of \(\overline{\Oct}\) which is not disjoint from \(\Sigma\). Moreover, if no symp of \(\overline{\Oct}\) is adjacent to $\Sigma$, then we can choose $\Sigma'$ in such a way that it intersects $\Sigma$ in a unique point $x$, which is collinear to all points of a line $L\subseteq \Sigma'\setminus\Sigma$, with $L\in\LO$.
\end{lemma}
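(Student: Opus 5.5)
The plan is to start from Lemma~\ref{Lem:CloseLine}. It gives a line \(L\) of \(\Oct\) and a point \(x\in\Sigma\) with \(x\) collinear to every point of \(L\).

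\textbf{Case (a): \(x\) lies on \(L\).} Then \(x\) is a point of \(\Oct\), and \(x^\polarity\) meets \(\Sigma\) in at least \(x\). We are done, with \(\Sigma'=x^\polarity\).

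\textbf{Case (b): \(x\notin L\).} Then \(x\) and \(L\) span a plane \(\pi=\langle x,L\rangle\). I would work with the symps of \(\overline{\Oct}\) through \(L^\polarity\), namely the symps \(y^\polarity\) for \(y\in L\). The key point is that every symp \(y^\polarity\) with \(y\in L\) contains \(L\), since \(L\subseteq L^\polarity\subseteq y^\polarity\). In the dual polar space of rank~\(3\) at each point \(y\in L\) (axiom (MS2)), the plane \(\pi\) through \(L\) lies in at least one symp \(\Sigma_y\). Among the symps \(y^\polarity\), \(y\in L\), I want one containing \(\pi\), hence containing \(x\), hence meeting \(\Sigma\). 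To get this, I would use ovoidality dually. The planes of \(\overline{\Oct}\) through \(y\) in \(y^\polarity\) form an ovoid of the dual \(\{y,y^\polarity\}\)-GQ, and by Corollary~\ref{Cor:OvoidalSympThroughLine} every line of \(\MS\) through a point of \(\Oct\) lies in a symp of \(\overline{\Oct}\). Applying that corollary to the line \(yx\), for \(y\in L\), produces a symp \(z^\polarity\) of \(\overline{\Oct}\) containing \(x\), so \(z^\polarity\cap\Sigma\ni x\). This settles the first assertion.

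\textbf{The refined assertion.} Assume now that no symp of \(\overline{\Oct}\) is adjacent to \(\Sigma\). Take any symp \(\Sigma'=z^\polarity\) of \(\overline{\Oct}\) meeting \(\Sigma\). By (MS3) the intersection is a point, a line or a plane. A plane is excluded by assumption.

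\textbf{Excluding a line.} Suppose \(z^\polarity\cap\Sigma\) is a line \(M\). I would take a plane of \(z^\polarity\) through \(M\) and use Corollary~\ref{Cor:PlaneSymp}. Alternatively, using the octagon \(\overline{\Oct}\) and its fullness, I would move to a neighbouring symp of \(\overline{\Oct}\) through a plane of \(\overline{\Oct}\) in \(z^\polarity\) meeting \(M\); that symp would become adjacent to \(\Sigma\), a contradiction. So \(z^\polarity\cap\Sigma=\{x\}\) is a single point.

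\textbf{Finding the line \(L\).} Next I need a line \(L\in\LO\) with \(L\subseteq z^\polarity\setminus\Sigma\) and every point of \(L\) collinear to \(x\). Since \(z^\polarity\) contains all \(\Oct\)-lines through \(z\), I split into two subcases.
\begin{itemize}
\item If \(x\ne z\) and \(x\perp z\), then by ovoidality any plane of \(z^\polarity\) through \(xz\) contains an \(\Oct\)-line through \(z\). This is exactly as in Case~2 of Lemma~\ref{Lem:CloseLine}. That line avoids \(\Sigma\), since \(\Sigma\cap z^\polarity=\{x\}\).
\item If \(x\) is not collinear to \(z\), then I would replace \(z\) by a suitable point of \(\Oct\) on an \(\Oct\)-line through \(z\) collinear to \(x\), changing \(\Sigma'\) accordingly. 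Otherwise I would return to the \(x\) and \(L\) produced in Case (b) and choose \(\Sigma'=y^\polarity\) with \(y\in L\), so that \(L\) itself works.
\end{itemize}

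\textbf{Main obstacle.} The hardest step is making the choice in the refined part rigorous: ensuring simultaneously that the intersection is a point and that a collinear \(\Oct\)-line exists. I would try first to always take \(\Sigma'\) among the \(y^\polarity\), \(y\in L\), with \(L\) and \(x\) from Lemma~\ref{Lem:CloseLine}. Then \(L\subseteq\Sigma'\) and \(x\in\Sigma'\) hold automatically, and non-adjacency forces \(\Sigma'\cap\Sigma=\{x\}\) or a line; the line case is then ruled out by Corollary~\ref{Cor:PlaneSymp} applied to \(L^\polarity\).
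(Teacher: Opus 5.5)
There is a genuine gap in the refined assertion, at exactly the step you flag as the main obstacle. (Your proof of the first assertion is fine: Corollary~\ref{Cor:OvoidalSympThroughLine}, applied to the line joining $x$ to a point of $L$, gives a symp of $\overline{\Oct}$ through $x\in\Sigma$.)

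You write that for $\Sigma'=y^\polarity$ with $y\in L$, the containment $x\in\Sigma'$ ``holds automatically''. It does not. The symps through $L^\polarity$ are exactly the $y^\polarity$, $y\in L$. If $x\notin L^\polarity$, then only one of them contains $x$, namely the unique symp containing both planes $L^\polarity$ and $\langle x,L\rangle$. Nothing in your proposal produces that one:
\begin{itemize}
\item Corollary~\ref{Cor:OvoidalSympThroughLine} applied to $yx$ yields some symp of $\overline{\Oct}$ through $yx$, not necessarily one through $L^\polarity$, so $L$ need not lie in it.
\item The step ``replace $z$ by a suitable point of $\Oct$ collinear to $x$'' does not close the non-collinear subcase, since for $w\in\PO$ with $w\col x$ there is no reason why $x\in w^\polarity$.
\end{itemize}

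The missing idea, which is essentially the whole content of the paper's proof, is this. Take $y'\in L^\polarity\setminus L$. Both $x$ and $y'$ are collinear to every point of $L$, so $x$ and $y'$ are collinear or symplectic. If they are collinear, this forces $x\in L^\polarity$, so every $y^\polarity$ contains $x$. If they are symplectic, then $\sympThrough{x}{y'}$ contains $L$ and $y'$, hence $L^\polarity$. By fullness of $\overline{\Oct}$ it therefore equals $y^\polarity$ for some $y\in L$. Equivalently, the planes and symps through $L$ form a projective plane, so $L^\polarity$ and $\langle x,L\rangle$ lie in a common symp, which is some $y^\polarity$.

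You also never exclude $L\cap\Sigma\neq\varnothing$, and this includes your Case~(a), $x\in L$. Without excluding it, $L\subseteq\Sigma'\setminus\Sigma$ can fail. For the same reason, your subcase $x\col z$ needs $x\notin\PO$ to guarantee that the $\Oct$-line found avoids $\Sigma$. The paper disposes of this first. If $p\in L\cap\Sigma$, every symp through $L^\polarity$ contains $p$. By Corollary~\ref{Cor:PlaneSymp}, either $L^\polarity\subseteq\Sigma$ or one symp through $L^\polarity$ is adjacent to $\Sigma$; in both cases some symp of $\overline{\Oct}$ is adjacent to $\Sigma$, contrary to the hypothesis.

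Finally, your ``excluding a line'' step is unnecessary. By Lemma~\ref{Lem:SympSymp}, two distinct symps never meet in exactly a line. So once $x\in y^\polarity\cap\Sigma$, $y^\polarity\neq\Sigma$ (as $L\not\subseteq\Sigma$) and non-adjacency hold, you get $y^\polarity\cap\Sigma=\{x\}$ at once.
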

\begin{proof}
	By Lemma~\ref{Lem:CloseLine}, there exists a point \(x\) in \(\Sigma\) and a line \(L\) of \(\Oct\) such that \(x\) is collinear to every point of \(L\). If $L$ intersects $\Sigma$, then we clearly can find a symp of \(\overline{\Oct}\) adjacent to $\Sigma$. Hence we may assume that $L\cap\Sigma=\varnothing$. Any point \(y\) of \(L^\polarity\) not on \(L\) is clearly either collinear or symplectic to \(x\). Since all symps containing \(L^\polarity\) are symps of \(\overline{\Oct}\), any symp through $L$ and $x$ (if $x\col y$), or the symp \(\sympThrough{x}{y}\) (if $x\sympl y$) is a symp of \(\overline{\Oct}\).
\end{proof}

\begin{lemma}
	The following statements are true:
	\begin{itemize}
		\itemsep0em
		\item[\((i)\)] Every symp of \(\MS\) contains at least one point of \(\Oct\).
		\item[\((ii)\)] Every point of \(\MS\) is collinear to a point of \(\Oct\)
		\item[\((iii)\)] Every point of \(\MS\) is contained in a symp of \(\overline{\Oct}\).
		\item[\((iv)\)] Every symp of \(\MS\) is adjacent to a symp of \(\overline{\Oct}\).
	\end{itemize}
\end{lemma}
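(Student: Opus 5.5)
Four statements, where (ii) is exactly the dual of (iii) and (i) is exactly the dual of (iv). The plan is to prove (i) directly, and (iv) by a short argument. Then (ii) and (iii) follow by applying the same reasoning to the dual embedding $\overline{\Oct}$ in $\overline{\MS}$ (Corollary~\ref{Cor:dualMS}). The hypotheses of this section (pseudo-isometric, strongly polarised, \dual, ovoidal) are symmetric under passing from $\Oct\subseteq\MS$ to $\overline{\Oct}\subseteq\overline{\MS}$. Under this duality, points of $\Oct$ become symps of $\overline{\Oct}$ and collinearity becomes adjacency, so (i)$\leftrightarrow$(iv) and (ii)$\leftrightarrow$(iii).

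\emph{Proof of (i).} Let $\Sigma$ be a symp of $\MS$. If some symp $y^\polarity$ of $\overline{\Oct}$ is adjacent to $\Sigma$, then the plane $y^\polarity\cap\Sigma$ contains a point of $\Oct$ by Lemma~\ref{Lem:OvoidalPointInPlane}, and we are done. Otherwise, Lemma~\ref{Lem:nondisjointsymps} gives a symp $\Sigma'=y^\polarity$ of $\overline{\Oct}$ meeting $\Sigma$ in a single point $x$, with $x$ collinear to all points of a line $L\in\LO$, where $L\subseteq\Sigma'\setminus\Sigma$. Consider the plane $\langle x,L\rangle$ in $\Sigma'$. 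Every point of $L$ is close to $\Sigma$, meeting it in a line through $x$. Take a point $u\in L$ and a line $M\subseteq\Sigma$ through $x$ collinear to $u$; then $\langle u,M\rangle$ is a plane. Now apply Corollary~\ref{Cor:OvoidalSympThroughLine} to the point $u$ of $\Oct$ and a suitable line through $u$ meeting $M$. This produces a symp of $\overline{\Oct}$ containing that line. The aim is to choose the line so that this symp shares a plane with $\Sigma$, contradicting the assumption that no symp of $\overline{\Oct}$ is adjacent to $\Sigma$. Failing that, the symp should at least share with $\Sigma$ a point of $\Oct$.

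The most promising concrete choice uses the dual of Lemma~\ref{Lem:OvoidalPointInPlane}. Every line of $\MS$ through a point $u\in\PO$ lies in a symp of $\overline{\Oct}$. Moreover, by ovoidality at $u$, the planes of $\overline{\Oct}$ through $u$ form an ovoid of the dual $\{u,u^\polarity\}$-GQ, so every line of $u^\polarity$ through $u$ lies in a plane of $\overline{\Oct}$. Apply this to the line $ux'$, where $x'\in M$ is collinear to $u$ with $x'\neq x$ (if $M\subseteq u^\polarity$). The resulting plane $\pi\in\overline{\Oct}$ through $ux'$ lies in a symp of $\overline{\Oct}$. The symps through $\pi$ range over a full line of $\overline{\Oct}$, so by Corollary~\ref{Cor:PlaneSymp} one of them is adjacent or symplectic to $\Sigma$. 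The key case analysis is then to show that the adjacent case must occur, or that the symplectic case forces the common point to lie in $\Oct$. I expect this case analysis, using Lemma~\ref{Lem:PS} and pseudo-isometry, to be the main obstacle. The tool for it is that points of $\Oct$ in different symps of $\overline{\Oct}$ obey the distance rules, which restricts where the point of $\Sigma$ can lie.

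\emph{Proof of (iv).} Given (i), pick a point $z\in\Sigma\cap\PO$. Consider the $\{z,z^\polarity\}$ configuration. If $\Sigma=z^\polarity$ we are done, since $z^\polarity$ is adjacent to $w^\polarity$ for $w\col z$ in $\Oct$. Otherwise $\Sigma\cap z^\polarity$ contains $z$, so it is a point, a line or a plane. If it is a plane, we are done. If it is a line $K$ through $z$, ovoidality of the planes of $\overline{\Oct}$ gives a plane of $\overline{\Oct}$ through $K$ inside $z^\polarity$; the symps of $\overline{\Oct}$ through it, together with Corollary~\ref{Cor:PlaneSymp}, then yield an adjacent one. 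If it is just the point $z$, first pick a line of $\Sigma$ through $z$ and apply Corollary~\ref{Cor:OvoidalSympThroughLine} to reduce to the line case.
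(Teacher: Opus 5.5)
\textbf{The gap is in your proof of (i).} In the case where no symp of $\overline{\Oct}$ is adjacent to $\Sigma$, you never finish the argument, and the concrete choice you suggest cannot work. The line $M$ lies in $\Sigma$. So if $M\subseteq u^\polarity$, then $u^\polarity$ and $\Sigma$ share a line, hence coincide or are adjacent (Lemma~\ref{Lem:SympSymp}). That is exactly what the case assumption excludes, so your branch ``if $M\subseteq u^\polarity$'' is vacuous. If instead $x'\notin u^\polarity$, the dual-ovoid property at $u$ says nothing about the line $ux'$.

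The ``key case analysis'' you anticipate is also unnecessary. Once a plane $\pi$ of $\overline{\Oct}$ contains \emph{some} point of $\Sigma$, every symp through $\pi$ meets $\Sigma$. Corollary~\ref{Cor:PlaneSymp} then leaves only two options: $\pi\subseteq\Sigma$, or one symp through $\pi$ is adjacent to $\Sigma$ and the rest are symplectic. All symps through $\pi$ lie in $\overline{\Oct}$ by fullness, so one of them is adjacent to $\Sigma$.

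\textbf{The missing idea} is to work at the point $y$ with $y^\polarity=\Sigma'$, using $x$ itself.
\begin{enumerate}
\item The only points of $\Oct$ in $y^\polarity$ are $y$ and its neighbours. Indeed, a point of $\Oct$ in $y^\polarity$ at distance $\ge 2$ from $y$ is at distance $\ge 3$ from some neighbour of $y$, hence special or opposite to it. That is impossible inside a symp.
\item Since $L\in\LO$ lies in $y^\polarity$, this forces $y\in L$. So $x\perp y$, and $xy$ is a line of $y^\polarity$ through $y$.
\item The dual-ovoid half of ovoidality at $y$ gives $N\in\LO$ through $y$ with $xy\subseteq N^\polarity$.
\item Now $x\in N^\polarity\cap\Sigma$, so the observation above yields a symp of $\overline{\Oct}$ adjacent to $\Sigma$. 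This is a contradiction.
\end{enumerate}

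Hence your second case never occurs. This proves (iv) outright, and (i) then follows from Lemma~\ref{Lem:OvoidalPointInPlane} exactly as in your first case. That is the paper's order: (iv) first, then (i). It makes your separate derivation of (iv) from (i) superfluous. That derivation is fine, but its ``line'' subcase is empty, since distinct symps never meet in exactly a line.

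\textbf{Duality bookkeeping.} Under $\Oct\subseteq\MS\leftrightarrow\overline{\Oct}\subseteq\overline{\MS}$, statement (i) becomes (iii) and (iv) becomes (ii). It is not (i)$\leftrightarrow$(iv) and (ii)$\leftrightarrow$(iii) as you state. Since you prove both (i) and (iv) before dualising, the plan still goes through, but the stated correspondence is wrong.
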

\begin{proof}
	Let \(\Sigma\) be a symp in \(\MS\) not contained in \(\overline{\Oct}\). By Lemma~\ref{Lem:nondisjointsymps}, either \((iv)\) holds or there exists a symp \(\Sigma'\) of \(\overline{\Oct}\) intersecting \(\Sigma\) in a point $x$, which is collinear to each point of a line \(L\subseteq\Sigma'\setminus\Sigma\), with $L\in\LO$. Let $p'\in\PO$ be such that ${p'}^\polarity=\Sigma'$. Then  $p'\in L$. Our assumption that \(\overline{\Oct}\) is ovoidal implies that there exists a plane \(\pi\) of \(\Oct\) with \(x,p'\in \pi\) and such that all symps through \(\pi\) belong to \(\overline{\Oct}\). Now \((iv)\) follows. 
	
	Now let, with $\Sigma$ as in the previous paragraph, \(\Sigma''\) be a symp of \(\overline{\Oct}\) adjacent to \(\Sigma\), and let $p''\in\PO$ be such that ${p''}^\polarity=\Sigma''$. If \(p''\in\Sigma\), then \((i)\) holds. If not, then since the embedding is ovoidal, there is a line \(L\) of \(\Oct\) through \(p''\) in the plane spanned by \(p''\) and the points of the plane \(\Sigma\cap\Sigma''\) collinear tp $p''$. Then \(L\) intersects \(\Sigma\) nontrivially and \((i)\) holds after all. 
	
	Dually, \((ii)\) and \((iii)\) hold true.
\end{proof}

For the rest of this section, we assume that \(\Oct\) is a generalised octagon fully embedded in a metasymplectic space \(\MS\) such that the embedding is pseudo-isometric, strongly polarised and dense.

\begin{lemma}	
	The embedding of \(\Oct\) is \dual.
\end{lemma}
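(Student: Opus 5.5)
The plan is to define \(\polarity\) on lines of \(\Oct\) and then check the three conditions of Definition~\ref{Def:Dual}. For points, \(\polarity\) is already the map of Lemma~\ref{Lem:ExistsSymp}; note \(x\in x^\polarity\) and every line of \(\Oct\) through \(x\) lies in \(x^\polarity\). For a line \(L=xy\) of \(\Oct\), Corollary~\ref{Cor:dualiso} makes \(x^\polarity\) and \(y^\polarity\) adjacent. I would define \(L^\polarity:=x^\polarity\cap y^\polarity\), which is a plane containing \(L\).

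\textbf{Step 1: well-definedness.} I must show \(L^\polarity\subseteq z^\polarity\) for every \(z\in L\). I would work in the residue of \(L\) in \(\MS\): its planes and symps form a projective plane, and any two symps through \(L\) meet in a unique plane. The symps \(x^\polarity,y^\polarity,z^\polarity\) are pairwise adjacent and all contain \(L\), so they are three lines of that projective plane, and I need them to be concurrent. Suppose instead \(z^\polarity\cap x^\polarity=:\pi'\neq\pi\). Take a line \(M\ne L\) of \(\Oct\) through \(x\) and a point \(m\in M\setminus\{x\}\); then \(m\in x^\polarity\), \(d(m,y)=d(m,z)=2\), and \(m\) is symplectic to \(y\) and \(z\) by pseudo-isometry. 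Within the \(\{x,x^\polarity\}\)-GQ, the lines of \(\Oct\) through \(x\) are pairwise non-coplanar (Lemma~\ref{Lem:LinesNotPlanar}). I would compare this with the planes \(\pi,\pi'\) through \(L\), and with the points of \(y^\polarity\) and \(z^\polarity\) collinear to \(m\), via Lemma~\ref{Lem:PS}. The aim is to produce two points of \(\Oct\) at distance \(\ge 3\) that are collinear or symplectic, contradicting pseudo-isometry. Concretely, I would pick \(u\) collinear to \(y\) and \(v\) collinear to \(z\) in \(\Oct\) with \(d(u,v)=4\), and use Corollary~\ref{Cor:NoOppInAdj}-type arguments on \(y^\polarity\) and \(z^\polarity\) relative to the triangle they would form.

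\textbf{Step 2: the image is a generalised octagon.} The map \(\polarity\) is injective on points, because distinct points go to non-equal symps by Corollary~\ref{Cor:dualiso}. On lines it is also injective: two distinct lines through a common point give different planes, by Lemma~\ref{Lem:LinesNotPlanar} applied inside \(x^\polarity\). Lines with no common point lie in symps \(x^\polarity\) that are symplectic, special or opposite, so those lines cannot share a plane. Next I show that \(L^\polarity\subseteq x^\polarity\) forces \(x\in L\). If \(x\notin L\), pick \(y\in L\); then \(x^\polarity\) and \(y^\polarity\) share the plane \(L^\polarity\), so they are adjacent, so \(x\perp y\). Doing this for two points of \(L\) makes \(x\) collinear to two points of a line of \(\Oct\) without being on it, which contradicts pseudo-isometry. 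Hence \(\polarity\) is an isomorphism from \(\Oct\) onto the point-line geometry of image symps and image planes, and the image is a generalised octagon.

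\textbf{Step 3: fullness, the main obstacle.} Every symp \(\Sigma\) of \(\MS\) through \(L^\polarity\) must equal \(z^\polarity\) for some \(z\in L\). Density alone is vacuous here, since \(\Sigma\supseteq L\) already. My first attempt is to exploit the structure of \(\Sigma\cap\PO\). Since any two of its points are at distance \(\le2\), and a common neighbour of two symplectic points lies in their symp, \(\Sigma\cap\PO\) is either contained in a line or contained in \(\{w\}\cup w^{\perp_\Oct}\) with \(w\in\Sigma\). I would then show that \(\Sigma\) contains a second line of \(\Oct\) through some \(z\in L\), which forces \(\Sigma=z^\polarity\). To find that line, apply density to auxiliary symps: take a symp \(\Sigma_0\) adjacent to \(\Sigma\) along a plane \(\rho\neq L^\polarity\) through a point \(z\in L\). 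Density gives a point of \(\Oct\) in \(\Sigma_0\), and Lemma~\ref{Lem:PS} and Lemma~\ref{Lem:adjacentspecial} would then locate points of \(\Oct\) in \(\Sigma\) off \(L\). If this fails, I would fall back on the dual argument: count symps through \(L^\polarity\) against points of \(L\) via the residue of the plane \(L^\polarity\). That residue is a line of \(\overline{\MS}\), and fullness means it has no more elements than \(L\) has points; I would try to bound its size using the bijection of Step 2 together with density.
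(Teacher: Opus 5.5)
Your proposal has genuine gaps in Steps~1 and~3, which carry all the content; Step~2 is fine once Step~1 holds.

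\textbf{Step 1.} You give no actual argument here, and the one concrete configuration you name cannot exist. If \(u\) and \(v\) are at distance \(1\) from \(y\) and \(z\) respectively, with \(y,z\in L\), then \(u,y,z,v\) is a path, so \(d(u,v)\le 3\). More importantly, you try to get well-definedness without \((D)\), and the paper does not do this; it derives well-definedness from fullness. It first proves, for an \emph{arbitrary} pair \(x,y\in L\), that every symp through \(x^\polarity\cap y^\polarity\) equals \(t^\polarity\) for some \(t\in L\). Applied to all pairs, this makes the image symps through \(L\) a subspace of the projective plane formed by the planes and symps through \(L\). If \(x^\polarity\cap z^\polarity\neq x^\polarity\cap y^\polarity\), this subspace contains two lines and hence is everything. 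Now take \(q\) at distance \(1\) from \(x\) with \(q\notin L\). The plane spanned by \(y\) and the line of \(q^\polarity\cap x^\polarity\) collinear with \(y\) lies in some \(t^\polarity\) with \(t\in L\setminus\{x\}\). That symp shares a line with \(q^\polarity\), forcing \(t\perp q\), which is absurd.

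\textbf{Step 3.} Your auxiliary symp \(\Sigma_0\) contains \(\rho\ni z\) with \(z\in L\subseteq\PO\). So \((D)\) is exactly as vacuous for \(\Sigma_0\) as for \(\Sigma\), which is the problem you yourself diagnosed. The counting fallback cannot work over infinite fields, and the bijection of Step~2 says nothing about the size of the residue of \(L^\polarity\).

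The missing idea is to choose the auxiliary plane disjoint from \(L\). Put \(\pi=x^\polarity\cap y^\polarity\) and take \(\alpha\subseteq\Sigma\) with \(\alpha\cap\pi=\{p\}\), \(p\notin L\). Any symp \(\Sigma'\neq\Sigma\) through \(\alpha\) meets \(\Sigma\) only in \(\alpha\), so it misses \(L\), and \((D)\) yields a genuine point \(a\in\Sigma'\cap\PO\).

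Suppose \(\alpha\) contains no point of \(\Oct\), so \(a\notin\alpha\). Each \(t\in L\) is collinear with exactly a line of \(\alpha\) through \(p\), and with no point of \(\Sigma'\setminus\alpha\). Lemma~\ref{Lem:adjacentspecial} then forces \(a\perp p\); otherwise every point of \(L\) would be at distance \(3\) from \(a\). Corollary~\ref{Cor:PointLine} now gives a unique \(z\in L\) symplectic to \(a\). The common \(\Oct\)-neighbour \(w\) of \(a\) and \(z\) satisfies \(w^\polarity=\sympThrough{a}{z}\supseteq pz\subseteq\pi\). So \(w^\polarity\) shares a line with both \(x^\polarity\) and \(y^\polarity\), and strong polarisation gives \(w\in L\). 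This contradicts the fact that no point of \(L\) is collinear to \(a\).

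Hence \(\alpha\) contains a point \(a\in\PO\). It is at distance \(1\) from a unique \(z\in L\), and \(\Sigma=\sympThrough{x'}{a}=z^\polarity\) for any \(x'\in L\setminus\{z\}\).
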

\begin{proof}
	We first show that for every line \(L\) of \(\Oct\) there exists a unique plane \(\pi\) such that for every point \(x\) on \(L\), the symp \(x^\polarity\) contains \(\pi\), and every symp of $\MS$ through $\pi$ arises in this way.
	
	Let $x$ and $y$ be two points on \(L\). Then, $x^\polarity$ and $y^\polarity$ intersect in a plane $\pi$. 
	
	Let $\Sigma$ be an arbitrary symp containing $\pi$, and assume $\Sigma\notin\{x^\polarity,y^\polarity\}$. We claim that there exists a point $z\in xy$ such that $z^\polarity=\Sigma$. Indeed, let $\alpha$ be a plane of $\Sigma$ intersecting $\pi$ in a unique point $p$ not on \(xy\).
	If $\alpha$ contains some point $a$ of $\Oct$, then it is collinear to a unique point $z$ of $xy$ and hence at distance~$1$ from $z$ (pseudo-isometric). Consequently, $\sympThrough{x}{a}=\Sigma=z^\polarity$. Let $\Sigma'$ be a symp through $\alpha$ distinct from $\Sigma$. By Hypothesis~\((D)\) and the previous argument, we may assume that there is a point $a\in \Sigma'\setminus \alpha$ that belongs to $\Oct$.
	If $a$ were not collinear to $p$, then all points of $xy$ would be special to $a$ (Lemma~\ref{Lem:adjacentspecial}), contradicting the embedding being pseudo-isometric and the fact that in a generalised octagon no line exists all of whose points are at distance $3$ from a given point. 
	Note that for any point on \(xy\), there is a line in \(\alpha\) of points collinear to it ($\Sigma$ is a rank~3 polar space). This means that no point of \(xy\) is collinear to any point of \(\Sigma'\) not in \(\alpha\), in particular no point of $xy$ is collinear to \(a\).
	It follows that there exists a unique point $z\in xy$ that is symplectic to $a$. 
	If $w$ is the unique point of $\Oct$ at distance~$1$ from both $z$ and $a$, then $w^\polarity = \sympThrough{a}{z}\supseteq pz$.  Hence the symp $w^\polarity$ is adjacent with both $x^\polarity$ and $y^\polarity$ (as they all have the line $pz$ in common), which implies, since the embedding is pseudo-isometric and strongly polarised (Corollary~\ref{Cor:dualiso}), that $w$ is at distance~$1$ from both $x$ and $y$, hence contained in $xy$, a contradiction as $a$ is not collinear to any point of $xy$. The claim is proved.
	
	Now assume for a contradiction that there exists some point $z$ on \(L\) such that $z^\polarity$ intersects $x^\polarity$ in a plane $\pi'$ distinct from $\pi$. 
	
	Then, by the foregoing, all symps through $\beta$ belong to the image of $\polarity$. Since, by (MS2) of Definition~\ref{def:MSS}, the planes and symps through the line $xy$ form a projective plane, this means that the image of $\polarity$ in that projective plane (where we view the points of the latter as the symps through $xy$) is a subspace containing a triangle. We conclude that every symp through $xy$ coincides with $t^\polarity$, for some $t\in xy$. Now let $q$ be a point of \(\Oct\) at distance~$1$ from $x$ and at distance~$2$ from $y$. Then $q^\polarity\cap x^\polarity$ is a plane $\gamma$. In $x^\polarity$, there is a plane $\gamma_y$ through $y$ intersecting $\gamma$ in a line. Every symp through $\gamma_y$ distinct from $x^\polarity$ coincides with $t^\polarity$, for some $t\in xy\setminus\{x\}$.  But, for such a $t$, $t^\polarity$ is adjacent to $q^\polarity$, hence $t\perp q$ in $\Oct$, a contradiction.  
	
	So, we have shown that $xy$ corresponds to a unique plane $\pi=:(xy)^\polarity$, and all symps through $(xy)^\polarity$ are in the image of $\polarity$. Together with the embedding being pseudo-isometric and strongly polarised, this means that $\overline{\Oct}$ is fully embedded generalised octagon in $\overline{\MS}$.
\end{proof}

We now show that the embedding of \(\Oct\) is ovoidal.

\begin{lemma}\label{Lem:OvoidalDirect}
	For any point \(x\) of \(\Oct\), the lines of \(\Oct\) in the \(\{x,x^\polarity\}\)-GQ form an ovoid.
\end{lemma}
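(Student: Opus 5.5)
We show that every plane of \(x^\polarity\) through \(x\) contains exactly one line of \(\Oct\) through \(x\); this is precisely the statement that the lines of \(\Oct\) through \(x\) form an ovoid of the \(\{x,x^\polarity\}\)-GQ. We prove uniqueness and existence separately. Uniqueness is immediate from Lemma~\ref{Lem:LinesNotPlanar}: no two lines of \(\Oct\) through \(x\) lie in a common plane, because their points other than \(x\) are symplectic by pseudo-isometry. So the real content is existence.

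For existence, fix a plane \(\pi\) of \(x^\polarity\) through \(x\). By (MS2) there is a symp \(\Sigma\neq x^\polarity\) through \(\pi\), and \(\Sigma\cap x^\polarity=\pi\) by (MS3). Hypothesis~\((D)\) gives a point \(a\) of \(\Oct\) in \(\Sigma\). We then distinguish cases according to the position of \(a\).

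\emph{Case \(a\in\pi\), \(a\neq x\).} Then \(a\col x\), so \(d(x,a)=1\) by pseudo-isometry, and the line \(xa\) is a line of \(\Oct\) contained in \(\pi\). We are done.

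\emph{Case \(a=x\).} We cannot use \(a\) directly, so we first replace it. Pick a plane \(\alpha\) of \(\Sigma\) with \(\alpha\cap\pi=\{x\}\), and a symp through \(\alpha\) other than \(\Sigma\). Applying \((D)\) to that symp together with the argument of the next case (or simply choosing \(\Sigma\) among the symps through \(\pi\) so that it contains a point of \(\Oct\) other than \(x\)) produces a usable point. The cleanest route is to check that not every symp through \(\pi\) can meet \(\Oct\) only in \(x\). If one did, the symps through \(\pi\) would all be forced into the image of \(\polarity\) at \(x\), but that image is the single symp \(x^\polarity\), contradicting the existence of at least two such symps. We therefore expect this case to be the only delicate bookkeeping step. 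With this in hand, we may assume \(a\in\Sigma\setminus\pi\).

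\emph{Case \(a\in\Sigma\setminus\pi\).} Since \(x,a\in\Sigma\), the point \(a\) is collinear or symplectic to \(x\). If \(a\col x\), then \(d(x,a)=1\), so \(a\in x^\polarity\cap\Sigma=\pi\), a contradiction. Hence \(a\sympl x\), so \(d(x,a)=2\) and \(\sympThrough{x}{a}=\Sigma\). Let \(w\) be the unique point of \(\Oct\) at distance \(1\) from both \(x\) and \(a\). The symp \(w^\polarity\) contains the symplectic points \(x\) and \(a\), hence \(w^\polarity=\sympThrough{x}{a}=\Sigma\). Since \(w\in w^\polarity\), we get \(w\in\Sigma\); and since \(d(w,x)=1\), we also have \(w\in x^\polarity\). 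Therefore \(w\in\Sigma\cap x^\polarity=\pi\), and \(xw\) is a line of \(\Oct\) through \(x\) lying in \(\pi\), which completes the proof of existence.
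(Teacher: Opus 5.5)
\textbf{There is a genuine gap, and it sits in the case \(a=x\), which you call bookkeeping.} Your uniqueness argument and your treatment of the case \(a\in\Sigma\setminus\pi\) are both correct; the problem is that nothing ever produces such a point \(a\). Every symp \(\Sigma\) through \(\pi\) contains \(x\). So Hypothesis~\((D)\) applied to \(\Sigma\) is vacuous: \(x\) itself witnesses it, and it says nothing about \(\Sigma\setminus\pi\).

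Your attempts to handle this case do not work:
\begin{itemize}
\item The alternative of choosing \(\Sigma\) among the symps through \(\pi\) so that it contains a point of \(\Oct\) other than \(x\) assumes exactly what has to be shown.
\item The ``cleanest route'' is a non sequitur. Nothing forces a symp whose only point of \(\Oct\) is \(x\) to be \(x^\polarity\). In fact, under the standing hypotheses, any symp through \(x\) meeting \(x^\polarity\) only in \(x\) contains no further point of \(\Oct\), and such symps exist.
\item The variant through a plane \(\alpha\ni x\) of \(\Sigma\) fails for the same reason, since every symp through \(\alpha\) again contains \(x\).
\end{itemize}

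\textbf{The missing idea is to apply \((D)\) to a symp that avoids \(x\).} This is what the paper does. Take a line \(L\subseteq\pi\) with \(x\notin L\), and a symp \(\Sigma'\supseteq L\) with \(x\notin\Sigma'\). Such a symp exists: the symps through \(L\) that contain \(x\) are exactly those through \(\pi\), and these form only one line of the projective plane of planes and symps through \(L\). Then \(x\) is close to \(\Sigma'\), with \(L\) the set of points of \(\Sigma'\) collinear to \(x\), and \((D)\) gives a point \(y\in\Sigma'\cap\PO\) with \(y\neq x\). There are three cases.
\begin{itemize}
\item If \(y\in L\), then \(xy\) is the required line.
\item If \(y\) is collinear to exactly one point of \(L\), then \(y\) is special to \(x\), so \(d(x,y)=3\). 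Hence \(x^\polarity\) contains a point of \(\Oct\) at distance \(4\) from \(y\), which is opposite to \(y\). But \(y\) is close to \(x^\polarity\), since it is collinear to a point of \(L\) and not contained in \(x^\polarity\). This contradicts Lemma~\ref{Lem:PS}.
\item Otherwise \(y\sympl x\) and \(y\) is collinear to every point of \(L\).
\end{itemize}

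In the last case your own argument finishes the proof. The symp \(\sympThrough{x}{y}\) contains all common neighbours of \(x\) and \(y\), so it contains \(L\) and hence \(\pi\). Let \(w\in\PO\) be the common neighbour of \(x\) and \(y\) in \(\Oct\). Then \(w^\polarity=\sympThrough{x}{y}\), which is distinct from \(x^\polarity\) because \(y\notin x^\polarity\). Therefore \(w\in w^\polarity\cap x^\polarity=\pi\), and \(xw\) is the required line. The paper closes this case slightly differently: \(w\in x^\polarity\) is collinear to \(y\), and all points of \(x^\polarity\) collinear to \(y\) lie on \(L\).
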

\begin{proof}
Let \(\pi\) be a plane through \(x\) in \(x^\polarity\) and let \(L\) be a line in \(\pi\) which does not contain \(x\). Since the embedding is pseudo-isometric, showing that there is a unique line of \(\Oct\) in \(\pi\) through \(x\) is equivalent to showing that \(L\) contains exactly one point of \(\Oct\).
Since \(\Oct\) does not contain any triangle and the embedding is pseudo-isometric it is clear that there is at most one point of \(\Oct\) on \(L\). Now let \(\Sigma\) be a symp through \(L\) not containing $x$. By our assumption, \(\Sigma\) contains a point \(y\) of \(\Oct\). If \(y\) is on \(L\) we are done so we may assume this is not the case.
Since \(\Sigma\) is a polar space, \(y\) is collinear to either a unique point on \(L\) or all points on \(L\).

\paragraph{Assume first that \(y\) is collinear to a unique point on \(L\)} 
The intersection \(x^\polarity\cap\Sigma\) is plane containing \(L\) and hence \(y\) is not a point in \(x^\polarity\). The point \(x\) is close to \(\Sigma\) with \(L\) as the unique line containing points of \(\Sigma'\) collinear to \(x\) (Lemma~\ref{Lem:PS}). Hence, \(x\) and \(y\) are special. Since the embedding is pseudo-isometric this implies that \(x\) and \(y\) are at distance 3 in $\Oct$. All points of \(\Oct\) at distance 1 from \(x\) are contained in \(x^\polarity\). Hence there is a point \(z\) of \(\Oct\) in $x^\polarity$ at distance \(4\) from \(y\). As the embedding is pseudo-isometric, \(y\) and \(z\) are opposite. However, $y$ is close to $x^\polarity$ as it is collinear to a point of $L\subseteq x^\polarity$, contradicting Lemma~\ref{Lem:PS}. Hence this situation does not occur.

\paragraph{Assume now that \(y\) is collinear to all points of \(L\)}
This implies that \(x\) and \(y\) are symplectic. Since the embedding is pseudo-isometric, \(x\) and \(y\) are at distance \(2\) from each other. All points of \(\Oct\) at distance 1 from \(x\) are contained in \(x^\polarity\) and hence \(x^\polarity\) also contains a point \(x'\) of \(\Oct\) at distance \(3\) from \(y\). Again, the embedding is pseudo-isometric which now implies that \(y\) and \(x'\) are special. Therefore, since \(x^\polarity\) contains a point special to \(y\), the point \(y\) can not be contained in \(x^\polarity\). 
The point \(y\) is collinear to \(L\) in \(x^\polarity\) and hence close to \(x^\polarity\) (Lemma~\ref{Lem:PS}).
Let \(z\) be the unique point of \(\Oct\) at distance \(1\) from both \(x\) and \(y\). All points of \(\Oct\) at distance \(1\) from \(x\) are contained in \(x^\polarity\) so in particular also \(z\). The points of \(x^\polarity\) collinear to \(y\), which now includes \(z\), are all on \(L\) (Lemma~\ref{Lem:PS}).
We have found the point of \(\Oct\) on \(L\) which we were looking for.
\end{proof}

\begin{lemma}
	For any point \(x\) of \(\Oct\), the planes of \(\Oct\) in the \(\{x,x^\polarity\}\)-GQ form a dual ovoid.
\end{lemma}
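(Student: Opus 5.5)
The plan is to dualise Lemma~\ref{Lem:OvoidalDirect}, working inside the \(\{x,x^\polarity\}\)-GQ, whose points are the lines of \(x^\polarity\) through \(x\) and whose lines are the planes of \(x^\polarity\) through \(x\). The planes of \(\overline{\Oct}\) through \(x\) lying in \(x^\polarity\) are the planes \(L^\polarity\) with \(x\in L\in\LO\); these are indeed contained in \(x^\polarity\) by the previous lemma (symmetry). To obtain a dual ovoid, I must show that every line \(M\) of \(x^\polarity\) through \(x\) lies in exactly one plane \(L^\polarity\) with \(x\in L\in\LO\).

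\emph{Uniqueness.} Suppose \(M\subseteq L_1^\polarity\cap L_2^\polarity\) with \(L_1\neq L_2\) lines of \(\Oct\) through \(x\). Choose \(y_i\in L_i\setminus\{x\}\). By strong polarity, \(y_1^\polarity\) and \(y_2^\polarity\) are symplectic, since \(y_1\sympl y_2\) by pseudo-isometry. They meet in a single point, which must be \(x\) because \(x\) lies in both (each contains \(L_i^\polarity\ni x\)). On the other hand, \(M\subseteq L_i^\polarity\subseteq y_i^\polarity\) for \(i=1,2\), so \(y_1^\polarity\cap y_2^\polarity\) contains the whole line \(M\). This contradiction gives uniqueness.

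\emph{Existence.} I would dualise the argument of Lemma~\ref{Lem:OvoidalDirect}. In that lemma, a plane \(\pi\ni x\) of \(x^\polarity\) and a line \(L\subseteq\pi\) not through \(x\) were fixed, and density was used to find a point \(y\in\Oct\) in a symp \(\Sigma\supseteq L\). Dually, fix the line \(M\ni x\) and a point \(p\in M\setminus\{x\}\), so that the symps through \(M\) correspond to a dual object. By Corollary~\ref{Cor:dualMS}, it is natural to replace density by its dual statement (every point of \(\MS\) is collinear with a point of \(\Oct\)); this dual density follows because \(\overline{\Oct}\) is fully embedded in \(\overline{\MS}\), the situation is now symmetric, and \(\Oct\) is recovered from \(\overline{\Oct}\) via the polarity. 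However, I would rather argue directly to avoid circularity.

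\emph{Direct argument.} Take the plane \(\alpha\) of \(x^\polarity\) through \(M\) and a symp \(\Sigma\neq x^\polarity\) through \(\alpha\). By \((D)\), \(\Sigma\) contains a point \(a\in\Oct\). The goal is to produce \(w\in\Oct\) with \(w\perp x\) and \(M\subseteq w^\polarity\).
\begin{itemize}
\item If \(a\in x^\polarity\), then \(\Sigma\) and \(x^\polarity\) share \(a\) and the plane \(\alpha\), so \(a\in\alpha\). Then \(a\) is collinear with a point of \(\Oct\) on \(M\), or \(a\sympl x\), and I pass to the midpoint \(w\) of \(x\) and \(a\).
\item If \(a\notin\alpha\), I use Lemma~\ref{Lem:PS} and pseudo-isometry, as in the two cases of Lemma~\ref{Lem:OvoidalDirect}. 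In \(\Sigma\), the point \(a\) is collinear with a line of \(\alpha\) or with all of \(\alpha\). If \(a\) is special to \(x\), the same contradiction as in that lemma applies, namely a point of \(x^\polarity\) opposite \(a\) while \(a\) is close to \(x^\polarity\). Otherwise \(a\sympl x\), and the midpoint \(w\in\Oct\) satisfies \(w^\polarity=\sympThrough{x}{a}\)-related data containing \(M\). More precisely, \(w^\polarity\supseteq\) the points of \(\alpha\) collinear to \(a\), and I aim to show this set contains \(M\), so that \(M\subseteq (xw)^\polarity=x^\polarity\cap w^\polarity\).
\end{itemize}

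I expect the main obstacle to be this last step: ensuring that the line of \(\alpha\) collinear with \(a\) is exactly \(M\) rather than some other line through \(x\). To handle it, I would choose \(\Sigma\) (or use a second symp through \(\alpha\)) so that \(a\)'s perp in \(\alpha\) is forced to contain \(M\). Alternatively, I would invoke the symmetry already proved, namely that the symps through \(L^\polarity\) are exactly the \(t^\polarity\) with \(t\in L\), and count in the projective plane of planes and symps through \(M\) given by (MS2), as in the proof of symmetry.
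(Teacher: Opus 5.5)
Your uniqueness argument is correct: two distinct lines of $\Oct$ through $x$ give symplectic symps $y_1^\polarity,y_2^\polarity$ meeting only in $x$, so they cannot both contain $M$. The paper does not spell this out, so that part is a useful addition.

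The existence part has a genuine gap at its first step. Every symp $\Sigma$ through a plane $\alpha\ni x$ contains $x$, which is a point of $\Oct$, so applying $(D)$ to $\Sigma$ gives nothing: the point $a$ may simply be $x$. Worse, by Lemma~\ref{Lem:OvoidalDirect} every plane $\alpha$ of $x^\polarity$ through $x$ already contains a line of $\Oct$ through $x$, so $a$ may be any point of that line. Your case ``$a\in x^\polarity$'' is exactly this situation, and it says nothing about $M$. (There $a\in\alpha$ forces $a\col x$ or $a=x$, so ``$a\sympl x$'' cannot occur.)

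The step you flag as the main obstacle is actually harmless. If you had $a\in\PO\cap(\Sigma\setminus\alpha)$, then:
\begin{itemize}
\item $a$ is close to $x^\polarity$, is not collinear with $x$, and is not special to $x$, so $a\sympl x$;
\item hence $w^\polarity=\sympThrough{x}{a}$ contains $x$ and the line of $\alpha$ collinear with $a$, which avoids $x$, so it contains all of $\alpha$;
\item thus $\alpha=(xw)^\polarity\supseteq M$.
\end{itemize}
But this shows such an $a$ exists only if $\alpha$ is already a plane of $\overline{\Oct}$, which is what you are trying to find. So no choice of $\Sigma$ through $\alpha$ helps. The counting alternative is unspecified. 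The ``dual density'' route is circular, since that statement is only derived in the converse direction, from symmetry and ovoidality.

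The missing idea is to choose a symp that meets $x^\polarity$ in as little as possible. Pick $z\in M\setminus\{x\}$; then $z\notin\PO$, since otherwise $M$ would be a line of $\Oct$. Take a symp $\Sigma$ through $z$ with $\Sigma\cap x^\polarity=\{z\}$. The point $y\in\PO\cap\Sigma$ given by $(D)$ then lies outside $x^\polarity$, and a short case analysis finishes:
\begin{itemize}
\item If $y\sympl z$, then $y$ is far from $x^\polarity$ and $z$ is the only point of $x^\polarity$ symplectic to $y$. This forces $d(x,y)=3$, and then the point of $\Oct$ at distance $1$ from $x$ and $2$ from $y$ would have to be $z$, a contradiction.
\item If $y\col z$ and $y$ is special to $x$, then $y$ is close to $x^\polarity$, yet $x^\polarity$ contains a point of $\Oct$ opposite $y$, a contradiction.
\item Hence $y\col z$ and $y\sympl x$. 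Then $\sympThrough{x}{y}=u^\polarity$ for the midpoint $u$; this symp contains $z$, hence $M=xz$, so $M\subseteq x^\polarity\cap u^\polarity=(xu)^\polarity$.
\end{itemize}
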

\begin{proof}
	Let \(L\) be a line of \(\MS\) in \(x^\polarity\) through $x$. We need to show that there exists a line \(M\) of \(\Oct\) through $x$ such that \(M^\polarity\) contains \(L\).
	We may assume that \(L\) is not a line of \(\Oct\), otherwise this statement is trivial. 
	Let \(z\) be a point on \(L\) different from \(x\) and let $\Sigma$ be a symp through $z$ symplectic to $x^\polarity$. Hypothesis~\((D)\) yields a point $y\in\PO\cap\Sigma$. Noting that $y\neq z$, and hence $d(y,x)\geq 2$, there are three cases to consider.
	
	\textbf{Case 1.} \emph{The point $y$ is symplectic to $z$.} \\
In this case $d(y,x)=3$. Let $y_0\in\PO$ be at distance~1 from $x$ and distance~2 from $y$. Then $y_0\in x^\polarity$ and is symplectic to $y$. This yields $y_0=z$, a contradiction. 

	\textbf{Case 2.} \emph{The point $y$ is collinear to $z$ and special to $x$.}\\
Here we also have $d(y,z)=3$. Each point of $\Oct$ at distance 1 from $x$ and distance $4$ from $y$ lies in $x^\polarity$ and is opposite $y$, a contradiction, as this would imply that $y$ is symplectic to $z$. Hence this case cannot occur. 

	\textbf{Case 3.} \emph{The point $y$ is collinear to $z$ and symplectic to $x$.}\\
In this case $d(y,x)=2$. The symp $\sympThrough{y}{x}$ intersects $x^\polarity$ in a plane c$\pi$ containing $L$ and coincides with $u^\polarity$ for the unique point $u$ of $\Oct$ at distance~$1$ from both $x$ and $y$. Hence $L\subseteq\pi=(xu)^\polarity$, which completes the proof of the lemma. 
\end{proof}

We have now proven everything needed for Theorem~\ref{Thm:Dual+OvoidalEmbedding}.

	

\begin{remark}
	There is no known example of a \dual\ embedding which is not ovoidal. Moreover, if we let \(\Oct\) be a fully embedded generalised octagon in a metasymplectic space \(\MS\) such that the embedding is \dual\ and the generalised octagon has an order \((q,q^2)\) where \(q\) is finite, then a counting argument\footnote{This will be included in full in the future PhD thesis of Sebastian Petit.} shows that hypothesis \((D)\) is satisfied. Hence, by Theorem~\ref{Thm:Dual+OvoidalEmbedding}, the embedding is also ovoidal. Therefore, we conjecture that every \dual\ embedding is ovoidal.
\end{remark}

\section{The Ree-Tits octagons}\label{Sec:ReeTits}
We first recall how Ree-Tits octagons are defined and show that the corresponding embeddings satisfy all properties of our characterisations. In the following, a polarity of a metasymplectic space is an isomorphism of order $2$ to its dual.  An absolute element of a polarity is an element incident with its image.
\begin{theorem}[Tits (unpublished), proof in \cite{VanMaldeghem1998} (Theorem~2.5.2)]\label{RTOct}
	Let \(\MS\) be a metasymplectic space over some field \(\K\), i.e., the planes of \(\MS\) are planes over \(\K\). Suppose \(\MS\) admits a polarity \(\polarity\). Then, the geometry \(\Oct\) whose points and lines are the absolute points and lines, respectively with the natural incidence relation, is a generalised octagon of order \((|\K|,|\K|^2)\).
\end{theorem}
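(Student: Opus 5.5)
We must show that the absolute points and absolute lines of a polarity $\polarity$ of $\MS$ form a generalised octagon of order $(|\K|,|\K|^2)$. The plan is to work through the incidence structure of absolute elements and the $\mathsf{F_4}$ building, the way Tits' argument does. First we record the local structure at an absolute point $x$. Here $x\in x^\polarity$, and $\polarity$ induces a correlation between the residue of $x$ (a dual polar space of rank~$3$, by (MS2)) and the shadow of $x^\polarity$ (a polar space of rank~$3$, by (MS1)). Restricted to the $\{x,x^\polarity\}$-GQ, this gives a polarity of that generalised quadrangle. Since the GQ comes from a rank~$3$ polar space with Dynkin diagram $\mathsf{F_4}$ interchanging long and short roots, the quadrangle is a symplectic quadrangle $W(\K)$ and $\K$ has characteristic~$2$ with a Tits endomorphism $\sigma$ ($x^{\sigma^2}=x^2$). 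The absolute lines through $x$ are then exactly the absolute points of this GQ-polarity, namely a Suzuki–Tits ovoid. This yields $|\K|^2+1$ lines through each absolute point. Dually, an absolute line carries $|\K|+1$ absolute points, because every point of an absolute line $L$ is absolute: its image contains $L^\polarity\ni L$.

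Second, we set up the distance correspondence. We define a type-preserving map from flags of $\Oct$ to chambers of the $\mathsf{F_4}$ building fixed by $\polarity$: the absolute flag $(x,L)$ determines the chamber $\{x,L,L^\polarity,x^\polarity\}$. The key step is that $\polarity$ acts on the Coxeter group $W(\mathsf{F_4})$ as the graph automorphism, with fixed subgroup dihedral of order $16$, generated by $s_1s_4$ and $s_2s_3$. By the general theory of fixed points of building automorphisms (Tits' descent: when $\polarity$ admits absolute chambers, the fixed chambers form a building of the fixed Coxeter type), the fixed chambers form a building of type $I_2(8)$. Its rank~$2$ geometry is exactly the geometry of absolute points and absolute lines.

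Third, we verify thickness and the order. Every absolute line has at least $3$ absolute points, and every absolute point has at least $3$ absolute lines, since an ovoid of $W(\K)$ has $|\K|^2+1\ge 5$ points. The diameter-$8$ and girth-$16$ conditions then follow from the building axioms for the $I_2(8)$ fixed structure.

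The main obstacle is the second step. It requires showing that absolute chambers exist and that any two absolute flags lie in a common $\polarity$-stable apartment, which amounts to the connectivity and convexity properties needed to call the fixed structure a building. I would first attempt it geometrically, mimicking the pseudo-isometric analysis of Section~\ref{Sec:PseudoIsoEmbedding}. The idea is to prove that two absolute points at $\MS$-relations collinear, symplectic, special or opposite are joined in $\Oct$ by paths of length $1,2,3,4$, using the projections from Lemma~\ref{Lem:PS} and checking that these projections commute with $\polarity$. If that route fails, I would fall back on citing Tits' fixed-point theorem for buildings.
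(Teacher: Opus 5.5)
The paper does not prove this statement itself. It cites Van Maldeghem's book, where the octagon axioms are checked directly inside \(\MS\) through a chain of lemmas: the ovoid property at an absolute point, which is your Step~1, and the link between symplecticity and distance~\(2\).

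\textbf{The gap is in your Step~2.} That step carries the whole content of the theorem and is not established, because the principle you invoke is false as stated. Having an absolute chamber does not force the \(\theta\)-stable chambers to form a building of the fixed Coxeter type. A counterexample is a pseudo-polarity of \(\mathrm{PG}(3,q)\), \(q\) even. It has absolute chambers, but its absolute points form a plane \(H\), and its absolute lines are only the lines of \(H\) through the point \(H^\theta\in H\). So each point of \(H\setminus\{H^\theta\}\) lies on a single absolute line, and no generalised quadrangle arises.

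Correct descent theorems carry extra hypotheses. Your fallback, ``cite Tits' fixed-point theorem'', is therefore exactly the step left unverified. Two further things are missing or incomplete:
\begin{itemize}
\item You never prove that absolute points exist.
\item Your geometric alternative would only give paths of length \(\le 4\). Girth~\(16\), meaning no ordinary \(k\)-gons with \(k<8\), would still need an argument.
\end{itemize}
A minor error: \(s_2s_3\) is not fixed by the graph automorphism, which sends it to its inverse (it has order~\(4\)). The fixed subgroup is generated by \(s_1s_4\) and \((s_2s_3)^2\).

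\textbf{How to close the gap.} Material you already have suffices. First, existence of absolute points: take a point \(p\notin p^\theta\).
\begin{itemize}
\item If \(p\) is close to \(p^\theta\), let \(L\) be the line of points of \(p^\theta\) collinear to \(p\). Then \(p\in L^\theta\). Also \(\langle p,L\rangle^\theta\) is a line of \(p^\theta\) lying in the plane \(L^\theta\ni p\), so it is contained in, hence equal to, \(L\). Thus \(L^\theta=\langle p,L\rangle\supseteq L\), and \(L\) is absolute.
\item If \(p\) is far from \(p^\theta\), let \(\Sigma'\) be the unique symp through \(p\) meeting \(p^\theta\), say in \(q\). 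Then \(q^\theta\) is a symp through \(p\) meeting \(p^\theta\) in \((\Sigma')^\theta\). Uniqueness forces \(q^\theta=\Sigma'\ni q\), so \(q\) is absolute.
\end{itemize}
Next, verify the local condition that fails for the pseudo-polarity. Fix an absolute chamber \(\{x,L,L^\theta,x^\theta\}\).
\begin{itemize}
\item In the residue of \(\{x,x^\theta\}\), the \(\theta\)-stable chambers are the absolute lines through \(x\). These form an ovoid of the \(\{x,x^\theta\}\)-GQ, so there are at least two.
\item In the residue of \(\{L,L^\theta\}\), the \(\theta\)-stable chambers are the pairs \((y,y^\theta)\) with \(y\in L\), so there are at least three.
\end{itemize}
The only \(\theta\)-fixed elements of \(W_{\{1,4\}}\) and \(W_{\{2,3\}}\) are \(1\) and the longest element. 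Hence distinct such chambers are opposite in these residues. Projecting onto \(\theta\)-stable residues then yields the \(W^\theta\)-distance axioms. This gives a thick building of type \(I_2(8)\), i.e.\ a generalised octagon of order \((|\K|,|\K|^2)\).
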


\begin{definition}
	The generalised octagon $\Oct$ in the statement of Theorem~{\ref{RTOct}} is called a  \emph{Ree-Tits octagon}, and the corresponding embedding in $\MS$ is called the \emph{natural embedding of $\Oct$}. 
\end{definition}

\begin{corollary}\label{Cor:RTPolarised}
	Let \(\Oct\) be a Ree-Tits octagon naturally embedded in a metasymplectic space \(\MS\) with associated polarity $\polarity$.
	Then, for each point $x$, the symp $x^\polarity$ of $\MS$ contains all points of $\Oct$ collinear to $x$. 
\end{corollary}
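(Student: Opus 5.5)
We only have to use the polarity \(\polarity\) and the fact that absolute points and lines are those incident with their images. Let \(x\) be a point of \(\Oct\) and \(y\) a point of \(\Oct\) collinear to \(x\) in \(\Oct\). Then there is an absolute line \(L\) containing both \(x\) and \(y\), and absoluteness means \(L\) is incident with the plane \(L^\polarity\). Because \(\polarity\) is an incidence-preserving map to the dual, it reverses the type order. Hence \(x \I^\MS L\) gives \(L^\polarity \I^\MS x^\polarity\), where \(L^\polarity\) is a plane and \(x^\polarity\) a symp. So \(L^\polarity \subseteq x^\polarity\).

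The key step is to derive \(L\subseteq x^\polarity\). Since \(L\) is absolute, it is incident with the plane \(L^\polarity\), so \(L\subseteq L^\polarity\subseteq x^\polarity\). In particular \(y\in x^\polarity\). Also \(x\) is absolute, so \(x\in x^\polarity\), which is consistent.

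There is one small point to justify. Incidence in \(\MS\) is defined through common chambers, so we must check that it is transitive along a flag (line in plane in symp). This follows from the chamber structure and the shadow axioms (MS1)–(MS2): a plane incident with a symp is in its shadow, and the points of a line incident with that plane are points of the symp. One also needs that \(\polarity\) maps chambers of \(\MS\) to chambers of its dual, which is exactly the definition of an isomorphism to the dual. We expect this bookkeeping to be the only, mild, obstacle.

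The proof therefore unwinds definitions: absoluteness of each line through \(x\) together with reversal of incidence yields \(L\subseteq L^\polarity \subseteq x^\polarity\), and so every point of \(\Oct\) collinear to \(x\) lies in \(x^\polarity\).
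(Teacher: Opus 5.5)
Your proposal is correct and is essentially the paper's own one-line argument. You apply the polarity to $x\in L$ to get $L^\polarity\subseteq x^\polarity$, and use that $L$ is absolute to get $x\in L\subseteq L^\polarity\subseteq x^\polarity$. Your extra bookkeeping about incidence being transitive along the flag (line, plane, symp) is harmless; the paper simply takes it for granted.
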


\begin{proof}
If $x\in L\in\LO$, then applying $\polarity$ and taking into account that $L$ is absolute, yields $x\in L\subseteq L^\polarity\subseteq x^\polarity$.
\end{proof}

Applying the corresponding polarity to the natural embedding of a Ree-Tits octagon, we immediately obtain:
\begin{corollary}\label{Cor:RTDual}
	Let \(\Oct\) be a Ree-Tits octagon embedded in a metasymplectic space \(\MS\).
	Then, the embedding of \(\Oct\) is \dual.
\end{corollary}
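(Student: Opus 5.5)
We have to show that the natural embedding is \dual\ in the sense of Definition~\ref{Def:Dual}. The map $\polarity$ will be the polarity of $\MS$ itself, restricted to the absolute points and absolute lines of $\Oct$. We then check the three conditions of the definition in turn.

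For the first condition, let $x$ be a point of $\Oct$. Then $x^\polarity$ is a symp, because a polarity sends points to symps. It contains all points of $\Oct$ collinear to $x$ by Corollary~\ref{Cor:RTPolarised}. It also contains $x$ itself, since $x$ is absolute.

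For the second condition, let $L$ be a line of $\Oct$. Then $L$ is an absolute line, so $L^\polarity$ is a plane (lines are sent to planes) and $L \I^\MS L^\polarity$. Now take a point $x \I L$. Since $\polarity$ is an isomorphism $\MS\to\MS^D$, it preserves incidence. Hence $x^\polarity$ is incident with $L^\polarity$, which means the symp $x^\polarity$ contains the plane $L^\polarity$.

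The third condition is the substantive one. The idea is to apply the polarity to the whole octagon: $\polarity$ is an isomorphism from $\MS$ onto its dual $\overline{\MS}$, and it carries the set of absolute points and lines onto the set of absolute symps and planes. Since $\polarity^2=1$, the absolute symps are exactly the images $x^\polarity$ of absolute points $x$. Likewise, the absolute planes are exactly the images $L^\polarity$ of absolute lines $L$. So the image $\overline{\Oct}$ of $\polarity$ is $\Oct^\polarity$, and the isomorphism $\polarity$ transports the full embedding $\Oct\subseteq\MS$ to an embedding $\overline{\Oct}\subseteq\overline{\MS}$ that is again a generalised octagon.

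The only point needing care is fullness of this transported embedding. In $\overline{\MS}$ the symps play the role of points and the planes the role of lines, so a ``line'' $L^\polarity$ of $\overline{\Oct}$ has as its points all symps of $\MS$ through $L^\polarity$. We must show each such symp lies in $\overline{\Oct}$. Let $\Sigma \supseteq L^\polarity$ be such a symp. Then $\Sigma=y^\polarity$ for the point $y=\Sigma^\polarity$. Incidence is preserved, so $y \I L$. The embedding $\Oct\subseteq\MS$ is full, so every point of $\MS$ on $L$ is a point of $\Oct$; hence $y\in\PO$, and therefore $\Sigma\in\overline{\Oct}$. This transport of fullness through the duality is the step I expect to be the main (mild) obstacle. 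It comes down to the fact that polarities preserve incidence and that absoluteness is preserved by $\polarity$: if $y$ is absolute, so is $y^\polarity$, because $y \I y^\polarity$ gives $y^\polarity \I y^{\polarity\polarity}=y$.
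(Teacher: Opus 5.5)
Your proof is correct and follows the paper's own argument. The paper proves this in one line, by applying the polarity $\polarity$ to the natural embedding. You have filled in the details it leaves implicit, notably that $\polarity^2=1$ makes every symp through $L^\polarity$ equal to $y^\polarity$ for some $y\in L$, which gives fullness of $\overline{\Oct}$ in $\overline{\MS}$. The one fact you assume, fullness of the natural embedding of $\Oct$ itself, follows from the same incidence preservation: if $y\in L$ with $L$ absolute, then $y\in L\subseteq L^\polarity\subseteq y^\polarity$, so $y$ is absolute.
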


\begin{lemma}[See \cite{VanMaldeghem1998}, Theorem~2.5.2, lemma~2]\label{Lem:RTovoidal}
	Let \(\Oct\) be a Ree-Tits octagon naturally embedded in a metasymplectic space \(\MS\).
	Then, the embedding of \(\Oct\) is ovoidal.
\end{lemma}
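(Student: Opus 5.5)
The plan is to prove the two ovoid conditions of Definition~\ref{Def:Ovoidal} directly from the polarity $\polarity$. Since $\polarity$ is an incidence-preserving isomorphism onto the dual, it maps the $\{x,x^\polarity\}$-GQ (lines and planes through $x$ inside $x^\polarity$) to itself with the roles of lines and planes exchanged. So the second condition is the image of the first under $\polarity$, and it suffices to show that the lines of $\Oct$ through $x$ form an ovoid of that GQ. Concretely, we must show that every plane $\pi$ with $x\in\pi\subseteq x^\polarity$ contains exactly one absolute line through $x$.

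\emph{At most one.} Two absolute lines $L,M$ through $x$ lie in $\Oct$. If they were in a common plane $\pi$, then $L^\polarity$ and $M^\polarity$ would both contain $\pi^\polarity$. That point lies in $x^\polarity$, and in fact $\pi^\polarity$ is a point collinear to $x$ lying in $L^\polarity\cap M^\polarity$. Applying Theorem~\ref{RTOct}, we then compare with the known structure: distinct lines of $\Oct$ through $x$ correspond to planes $L^\polarity, M^\polarity$ of $x^\polarity$ through $x$ meeting only in $x$, which would be forced if the argument of Lemma~\ref{Lem:LinesNotPlanar} applies. That lemma in turn needs points at distance $2$ in $\Oct$ to be symplectic. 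I would verify this directly: if $y\col x\col z$ in $\Oct$ with $y,z$ collinear in $\MS$, then the plane $\langle x,y,z\rangle$ would yield a triangle of absolute points, which is impossible by a standard absolute-element argument.

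\emph{At least one.} Let $\pi$ be a plane through $x$ in $x^\polarity$, and put $p=\pi^\polarity$, a point of $x^\polarity$. If $p=x$, then $\pi=x^\polarity$ restricted, which is impossible, so $p\neq x$. If $p\in\pi$, then $p$ is absolute, and the line $xp$ is absolute provided $(xp)^\polarity\ni xp$, i.e.\ $\pi\cap x^\polarity$ contains $xp$. This follows since $(xp)^\polarity=x^\polarity\cap p^\polarity\supseteq\pi$, so $xp$ is the required absolute line in $\pi$. If $p\notin\pi$, consider the line $M=\pi\cap p^\polarity$ (or a point), and the map $\pi\mapsto$ points of $\pi$ induces a polarity on the residue of $x$ within $x^\polarity$, namely on a GQ of type $B_2$/$C_2$. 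The absolute lines of such a polarity of a generalised quadrangle form an ovoid (Tits' classical result for Suzuki–Tits ovoids). I would use this by restricting $\polarity$ to the residue of the flag $(x,x^\polarity)$: the map $L\mapsto L^\polarity$ sends lines through $x$ in $x^\polarity$ to planes through $x$ in $x^\polarity$, giving a polarity $\rho$ of the $\{x,x^\polarity\}$-GQ whose absolute elements are exactly the lines of $\Oct$ through $x$. The set of absolute points of a polarity of a GQ is an ovoid, and the absolute lines form a dual ovoid.

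The main obstacle is verifying that $\rho$ is well defined and is a genuine polarity of the GQ, meaning it preserves incidence and maps lines of $\MS$ through $x$ in $x^\polarity$ into planes through $x$ in $x^\polarity$. This uses $x\in x^\polarity$: for $L\ni x$ in $x^\polarity$ we get $L^\polarity\ni x^{\polarity\polarity}=x$ and $L^\polarity\subseteq x^\polarity$. Once that is settled, the classical fact that the absolute points of a polarity of a GQ form an ovoid (every line is incident with exactly one absolute point, via the standard argument that $L$ and $L^\rho$ cannot be non-incident) finishes both conditions.
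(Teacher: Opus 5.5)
Your closing argument is correct. It is the standard proof, and essentially what lies behind the lemma the paper cites from \cite{VanMaldeghem1998}; the paper gives no proof of its own. Because $\{x,x^\polarity\}$ is an absolute flag, $L\mapsto L^\polarity$ is a polarity $\rho$ of the $\{x,x^\polarity\}$-GQ. Its absolute points are exactly the lines of $\Oct$ through $x$, and its absolute lines are exactly the planes of $\overline{\Oct}$ in that GQ. The classical fact that the absolute points and absolute lines of a polarity of a generalised quadrangle form an ovoid and a spread then settles both conditions.

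That argument gives existence and uniqueness at once, so you should delete your ``at most one'' and ``at least one'' paragraphs, which are wrong as written:
\begin{itemize}
\item You treat $\pi^\polarity$ as a point. But $\polarity$ swaps lines and planes, so $\pi^\polarity$ is a line (in fact a line through $x$ in $x^\polarity$). The case split $p=x$, $p\in\pi$, $p\notin\pi$ is therefore meaningless.
\item Your ``at most one'' argument is circular. Lemma~\ref{Lem:LinesNotPlanar} presupposes $(S)$, and the paper derives $(S)$ for Ree--Tits octagons from the very lemma you are proving.
\item Your ``triangle of absolute points'' gives no contradiction. Nothing tells you the $\MS$-line $yz$ is absolute. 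Excluding two $\Oct$-lines through $x$ in one plane is precisely the uniqueness half of the ovoid property.
\end{itemize}

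Your parenthetical justification of the classical fact is false. A line $L$ of the GQ is incident with the point $L^\rho$ if and only if $L$ is absolute. The absolute lines form a spread, so most lines are not incident with their image. The correct argument runs as follows.
\begin{itemize}
\item \emph{Existence.} If $L^\rho\in L$, then $L^\rho$ is an absolute point on $L$. Otherwise let $L\ni q\in M\ni L^\rho$ be the unique incidence path of length~$3$ joining $L$ to $L^\rho$. Applying $\rho$ gives $L^\rho\in q^\rho\ni M^\rho\in L$. So $q^\rho$ is a line through $L^\rho$ meeting $L$. By uniqueness of such a line, $q^\rho=M\ni q$, and $q$ is absolute.
\item \emph{Uniqueness.} Suppose $p\neq q$ are absolute points on $L$. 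Then $p^\rho\neq q^\rho$ are lines through $L^\rho$ and through $p$, respectively $q$. The absence of triangles forces $L^\rho\in L$. For whichever of $p,q$ differs from $L^\rho$, say $p$, this gives $p^\rho=L$, hence $p=L^\rho$, a contradiction.
\end{itemize}
This uses no finiteness, which matters because $\K$ may be infinite. With these corrections, and the deleted paragraphs removed, your proof is complete.
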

The previous lemma already implies that two points at mutual distance $2$  in a Ree-Tits octagon $\Oct$ are symplectic in the metasymplectic space in which $\Oct$ is naturally embedded. The converse is shown in \cite{VanMaldeghem1998}, Theorem~2.5.2, lemma~4. Hence we can state without further proof:
\begin{lemma}\label{Lem:RTd2}
	Let \(\Oct\) be a Ree-Tits octagon naturally embedded in a metasymplectic space~\(\MS\).
	Then, 
	\begin{itemize}
		\item[\((S)\)] two points of \(\Oct\) are at distance \(2\) if, and only if, they are symplectic.
	\end{itemize}
\end{lemma}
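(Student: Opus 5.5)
The first direction follows from the ovoidal property (Lemma~\ref{Lem:RTovoidal}); the converse I would reduce to showing that points at distance $3$ and $4$ in $\Oct$ are special and opposite, respectively, by reusing the arguments of Section~\ref{Sec:PseudoIsoEmbedding}.

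\emph{Distance $2$ implies symplectic.} Let $x,y\in\PO$ with $d(x,y)=2$, and let $z$ be the unique point of $\Oct$ at distance $1$ from both. By Corollary~\ref{Cor:RTPolarised}, both $x$ and $y$ lie in the symp $z^\polarity$, which also contains $z$. Since $z^\polarity$ is a polar space, $x$ and $y$ are either collinear or symplectic in $\MS$.
\begin{itemize}
\item They cannot lie on a common line of $\MS$: that line would contain $z$, forcing the distinct $\Oct$-lines $xz$ and $zy$ to coincide.
\item If $x\col y$ with $z\notin xy$, then $xz$ and $zy$ would span a plane through $z$ in $z^\polarity$. So $xz$ and $zy$ would be two lines of $\Oct$ in one plane, i.e.\ two collinear points of the $\{z,z^\polarity\}$-GQ. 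This contradicts the first condition of Definition~\ref{Def:Ovoidal}: the $\Oct$-lines through $z$ form an ovoid, hence are pairwise non-collinear in that GQ.
\end{itemize}
Hence $x\sympl y$. This also shows that the analogue of Lemma~\ref{Lem:LinesNotPlanar} holds for the natural embedding.

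\emph{Symplectic implies distance $2$.} Distance $1$ is excluded because it gives collinearity. So it suffices to show that points at distance $3$ are special and points at distance $4$ are opposite. I would redo the arguments of Lemma~\ref{Lem:d3Special}, Lemma~\ref{Lem:dist4notcoll} and Lemma~\ref{L20}, but using only the direction ``distance $2$ $\Rightarrow$ symplectic'' just proved, together with the polarity.

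For distance $3$, take a path $x\col a\col b\col y$ in $\Oct$. Then $x\sympl b$ and $x\diamond b=a^\polarity$. The key point is to show $y\notin a^\polarity$; granted this, $y$ is close to $a^\polarity$ through the line of points collinear to it, and Lemma~\ref{Lem:PS} gives $x\special y$ exactly as in Lemma~\ref{Lem:d3Special}.

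To prove $y\notin a^\polarity$, suppose the contrary. Then $\polarity$ maps $y\in a^\polarity$ to $a\in y^\polarity$, so $y^\polarity$ contains both $a$ and $b$. The symps $y^\polarity$ and $a^\polarity$ then share the line $ab$ (and the point $y$ not on $ab$), so they are adjacent, meeting in a plane $\beta$ through $ab$. Applying $\polarity$ once more, $\beta^\polarity$ would be a line through $y$ and $a$. I expect to contradict absoluteness by checking, via Theorem~\ref{RTOct} and the fact that $\polarity$ preserves incidence, that $\beta^\polarity$ would have to be an absolute line joining $a$ and $y$ with $d(a,y)=2$. That is impossible in the octagon.

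With distance $3$ settled, the argument of Lemma~\ref{Lem:dist4notcoll} shows that points at distance $4$ are not collinear. To conclude they are opposite, I would find one opposite pair at distance $4$ by adapting Lemma~\ref{Lem:ExitsOpposite}, then propagate with Lemma~\ref{Lem:inductionOnEvenPoly} as in Lemma~\ref{L20}.

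The main obstacle is the distance-$3$ step, namely excluding $y\in a^\polarity$. I expect this to need genuine use of the polarity, which is why the paper cites \cite{VanMaldeghem1998}, Theorem~2.5.2, Lemma~4 for this direction. If my argument there fails, I would simply invoke that lemma.
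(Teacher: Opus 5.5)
Your first half (distance $2\Rightarrow$ symplectic, via the ovoid of $\Oct$-lines in the $\{z,z^\polarity\}$-GQ) is exactly the paper's argument. The paper then simply cites \cite{VanMaldeghem1998} for the converse, so your self-contained converse is a different route.

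Your distance-$3$ step is correct, and it closes faster than you expected. If $y\in a^\polarity$, then $a$ and $y$ lie in both $a^\polarity$ and $b^\polarity$. Since $d(a,y)=2$, you already know $a\sympl y$, so $a^\polarity=b^\polarity$, and hence $a=b$ because $\polarity$ is bijective. In your own version, $a,y\in\beta$ already gives $a\col y$, so the detour through $\beta^\polarity$ is unnecessary. It does work, though, since $\beta^\polarity=ay\subseteq\beta$ is absolute.

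The genuine gap is the distance-$4$ step, where you cannot simply ``adapt'' Lemma~\ref{Lem:ExitsOpposite}. Lemmas~\ref{Lem:S17S35NotAdj}, \ref{Lem:S17S35NotSympl} and~\ref{Lem:S17S35Spec} all get their contradictions from the converse half of $(S)$ applied to pairs at distance $4$. Examples are a point of $x_0x_7$ symplectic to a point of $x_3x_4$ (or to $x_3$), and $x_2\sympl x_6$ with $d(x_2,x_6)=4$. At that stage you only know that distance-$4$ pairs are not collinear. Whether they can be symplectic is precisely what is still open, so the adapted argument is circular.

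The missing idea is to use the polarity once more. Take points $x_0,\dots,x_4$ of $\Oct$ with $d(x_i,x_{i+1})=1$ and $d(x_0,x_4)=4$. The symps $x_1^\polarity=\sympThrough{x_0}{x_2}$ and $x_3^\polarity=\sympThrough{x_2}{x_4}$ are distinct by injectivity. They are also not adjacent: a common plane $\pi$ would make $\pi^\polarity$ a line through $x_1$ and $x_3$, contradicting $x_1\sympl x_3$. The proof of Lemma~\ref{Lem:NotAdjOpp} uses only ``distance $2\Rightarrow$ symplectic'' (with distinctness now automatic). It therefore yields $x_0\opp x_4$ directly for every pair at distance $4$, so you need neither Lemma~\ref{Lem:ExitsOpposite} nor Lemma~\ref{Lem:inductionOnEvenPoly}. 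With this repair your route gives a short proof of the converse from the polarity alone, replacing the citation.
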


\begin{corollary}\label{Lem:RTpseudoiso}
	Let \(\Oct\) be a Ree-Tits octagon embedded in a metasymplectic space \(\MS\).
	Then, the embedding of \(\Oct\) is pseudo-isometric.
\end{corollary}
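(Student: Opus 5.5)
The statement to prove is Corollary~\ref{Lem:RTpseudoiso}: the natural embedding of a Ree-Tits octagon is pseudo-isometric. The plan is to combine Lemma~\ref{Lem:RTd2} with Theorem~\ref{Thm:PseudoIsoEmbedding}.

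First, I check that the natural embedding is a full embedding in the sense of the Standing Hypothesis. By Theorem~\ref{RTOct}, the points and lines of $\Oct$ are the absolute points and absolute lines of $\MS$, with incidence inherited from $\MS$. So incidence in $\Oct$ is incidence in $\MS$. The one thing to verify is that every point of $\MS$ on an absolute line $L$ is absolute. I would argue this as follows.
\begin{itemize}
\item Let $x\in L$. Since $\theta$ reverses incidence and $L\subseteq L^\theta$, we get $x^\theta\supseteq L^\theta\supseteq L\ni x$. Hence $x$ is absolute, which is the same computation as in Corollary~\ref{Cor:RTPolarised}.
\item Distinct absolute points are distinct points of $\MS$, and each line of $\Oct$ coincides with its point set in $\MS$.
\end{itemize}
So $\Oct$ is a generalised octagon fully embedded in $\MS$.

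Second, Lemma~\ref{Lem:RTd2} says that this embedding satisfies Hypothesis~$(S)$. Theorem~\ref{Thm:PseudoIsoEmbedding} was proved for an arbitrary full embedding satisfying $(S)$. The chain of results used there is Lemma~\ref{Lem:d3Special}, Lemma~\ref{Lem:dist4notcoll}, Lemma~\ref{Lem:ExitsOpposite} and Lemma~\ref{L20}. Applying the theorem therefore gives directly that distances $1,2,3,4$ in $\Oct$ correspond to collinear, symplectic, special and opposite in $\MS$.

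The only step needing care is the fullness check in the first step, since the rest is a direct citation. Fullness relies only on $\theta$ being an incidence-reversing bijection, so I expect no real obstacle.
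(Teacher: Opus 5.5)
Your proposal is correct and matches the paper's proof, which deduces the corollary directly from Lemma~\ref{Lem:RTd2} and Theorem~\ref{Thm:PseudoIsoEmbedding}. Your added check of fullness, namely that every point on an absolute line is absolute via $x\in L\subseteq L^\polarity\subseteq x^\polarity$, is valid: it is the same computation as in Corollary~\ref{Cor:RTPolarised}, and the paper leaves this step implicit.
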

\begin{proof}
	Follows from Lemma~\ref{Lem:RTd2} and Theorem~\ref{Thm:PseudoIsoEmbedding}.
\end{proof}

We now first show the converse of the previous lemmas by proving Lemma~\ref{Cor:KeyCor}.
\begin{proof}[Proof of Lemma \emph{\ref{Cor:KeyCor}}]
Let \(\Oct\) be a generalised octagon fully embedded in a metasymplectic space \(\MS\). Suppose 
the embedding of \(\Oct\) in \(\MS\) is pseudo-isometric, strongly polarised, \dual\ and ovoidal. We define a subcomplex $\Xi$ of the building $\Delta$ corresponding to $\MS$ by collecting all pairs $\{x,x^\polarity\}$, with $x$ a point of $\Oct$ and $x^\polarity$ the symp containing all points of $\Oct$ collinear to $x$, and all pairs $\{L,L^\polarity\}$, where $L$ is a line of $\Oct$ and $L^\polarity$ is the plane $x^\polarity\cap y^\polarity$, for $x,y\in L$. Then $\Xi$ is the subcomplex induced on these simplices.  According to Theorem~4.33 of \cite{Rijpert2025}, we have to show that this is an \emph{ovoidal} subcomplex (in the sense of \cite{Rijpert2025}) and that, if $\{v_1,v_4\}$ and $\{v_1',v_4'\}$ are point-symp pairs belonging to $\Sigma$, with $v_1,v_1'$ opposite points, then $v_4$ and $v_4'$ are opposite symps. The latter condition is automatic by the definition of strongly polarised. For the former condition, we recall what is meant with ``ovoidal'' in \cite{Rijpert2025}. 
\begin{enumerate}
\item[(1)] Given a pair $\{L,L^\polarity\}\in\Xi$, with $L$ a line of $\Oct$, the mapping $\polarity:\PO\to \SMS:x\mapsto x^\polarity$ is a bijection from the points on $L$ to the set of symps containing $L^\polarity$. 
\item[(2)] Given a pair $\{x,x^\polarity\}\in\Xi$, with $x\in \PO$, the set $\{L\in \LMS \mid x\in L\subseteq L^\polarity\subseteq x^\polarity\}$ is an ovoid of the $(x,x^\polarity)$-GQ, and the the set $\{L^\polarity\in \PiMS \mid x\in L\subseteq L^\polarity\subseteq x^\polarity\}$ is an ovoid of  the dual of the $(x,x^\polarity)$-GQ,
\end{enumerate}

Condition~(1) is satisfied since \(\overline{\Oct}\) is full in \(\overline{\MS}\) by definition of the embedding being \dual. Condition~(2) is exactly the definition of ovoidal in the present paper and is hence satisfied. This shows Lemma~\ref{Cor:KeyCor}
\end{proof}

We can now combine our results to prove Theorem~\ref{Main}.
\begin{proof}[Proof of Theorem \emph{\ref{Main}}]
	Assume first that \(\Oct\) is a Ree-Tits octagon naturally associated to a polarity of \(\MS\).
	By Lemma~\ref{Lem:RTd2}, Condition $(S)$ is satisfied.
	By Corollary~\ref{Cor:RTPolarised}, the embedding of \(\Oct\) is polarised. 
		It follows from Lemma~\ref{Lem:RTd2} and Theorem~\ref{Thm:PseudoIsoEmbedding}, that the embedding of \(\Oct\) is pseudo-isometric.
	By Theorem~\ref{Thm:StronglyPolarisedEmbedding}, the embedding is also strongly polarised.
		By Corollary~\ref{Cor:RTDual} and Lemma~\ref{Lem:RTovoidal}, the embedding of \(\Oct\) is \dual\ and ovoidal. Therefore, by Theorem~\ref{Thm:Dual+OvoidalEmbedding}, it is also dense.
	

	Assume now that \(\Oct\) is a generalised octagon fully embedded in a metasymplectic space \(\MS\) such that
	\begin{itemize}
		\item[\((S)\)] two points of \(\Oct\) are at distance \(2\) if, and only if, they are symplectic,
		\item[\((P)\)] through every point \(x\) of \(\Oct\) there exists a symp \(x^\polarity\) of \(\MS\) such that all points collinear to \(x\) in \(\Oct\) are contained in \(x^\polarity\),
		\item[\((D)\)] every symp of \(\MS\) contains at least one point of \(\Oct\).
	\end{itemize}
	By Theorem~\ref{Thm:PseudoIsoEmbedding}, the embedding of \(\Oct\) is pseudo-isometric. 
	By Theorem~\ref{Thm:StronglyPolarisedEmbedding}, the embedding of \(\Oct\) is strongly polarised.
	By Theorem~\ref{Thm:Dual+OvoidalEmbedding} the embedding is \dual\ and ovoidal.
	Finally, Lemma~\ref{Cor:KeyCor} completes the proof.
\end{proof}
We now show the two variations.

\begin{proof}[Proof of Theorem~\emph{\ref{Var1}}]
	Let \(\Oct\) be a generalised octagon fully embedded in a metasymplectic space \(\MS\). It suffices to show that if the embedding of \(\Oct\) is polarised (Hypothesis~\((P)\)), then this embedding satisfies Hypothesis~\((S)\) and Hypothesis~\((D)\) if, and only if, 
	\begin{itemize}
		\item[\((D^\star)\)] every symp of \(\MS\) intersects \(\Oct\) in either a point, a line or all points at distance~1 from a point of \(\Oct\).
	\end{itemize}
	Assume first that the embedding of \(\Oct\) satisfies Hypothesis~\((S)\) and Hypothesis~\((D)\). Let \(\Sigma\) be an arbitrary symp of \(\MS\). By Hypothesis~\((D)\), the symp \(\Sigma\) contains at least one point of \(\Oct\). 
	By Theorem~\ref{Thm:PseudoIsoEmbedding}, the embedding is pseudo-isometric. Hence no symp contains points of $\Oct$ at mutual distance at least~3. 
	Assume now that \(\Sigma\) contains two distinct points \(x\) and \(y\) of \(\Oct\) which are not collinear. Then, these points must be symplectic and therefore, by Hypothesis~\((S)\), at distance~2 from each other. Let \(z\) be the unique point of \(\Oct\) collinear to both, then Hypothesis~$(P)$ asserts that  \(\Sigma = \sympThrough{x}{y} = z^\polarity\) and it contains all points at distance~1 from \(z\). Finally, assume $\Sigma$ contains two collinear points $x,y\in\PO$. Then it also contains the line $xy\in\LO$. If it contains an additional point of $\Oct$, we are back in the previous case. This shows Hypothesis~$(D^*)$.
	
	Assume now that the embedding of \(\Oct\) satisfies Hypothesis~\((D^\star)\). Clearly, Hypothesis~\((D)\) is satisfied. 
	It is also easy to see that any pair of points of \(\Oct\) at mutual distance~2 is either collinear or symplectic and that any pair of points of $\Oct$ which are symplectic are at distance~2. It remains to show that no two points of \(\Oct\) at mutual distance~2 are collinear. Suppose, for a contradiction, that the points $x,y\in\Oct$ are collinear in $\MS$ and $d(x,y)=2$. Let $z$ be the unique point of \(\Oct\) collinear to both $x$ and $y$. We consider a symp $\Sigma$ through the line $xy$ of $\MS$ not containing the point $z$. Hypothesis~$(D^*)$ now leads to an obvious contradiction.  
\end{proof}

\begin{proof}[Proof of Theorem~\emph{\ref{Var2}}]
	Let \(\Oct\) be a generalised octagon fully embedded in a metasymplectic space \(\MS\). It suffices to show that if the embedding of \(\Oct\) is polarised and dense, then this embedding satisfies Hypothesis~\((S)\) if, and only if, 
	\begin{itemize}
		\item[\((O)\)] there exist two points of \(\Oct\) that are opposite in \(\MS\),
	\end{itemize}
	Assume first that the polarised and dense embedding of \(\Oct\) satisfies Hypothesis~\((S)\).
	Then, by Theorem~\ref{Thm:PseudoIsoEmbedding}, the embedding is pseudo-isometric and therefore satisfies Hypothesis~\((O)\). 
	
	Assume now that the embedding of \(\Oct\) satisfies Hypothesis~\((O)\).
	By Hypothesis~\((P)\) any two points of \(\Oct\) at distance~2 from each other are either collinear or symplectic.  Hence Lemma~\ref{Lem:SSOpp} asserts that no pair of points of $\Oct$ at mutual distance~$3$ is opposite. Therefore there exist two point of $\Oct$ at mutual distance~$4$ which are opposite in $\MS$. Replacing in the proof of Lemma~\ref{L20} the reference to Lemma~\ref{Lem:d3Special} by a reference to Lemma~\ref{Lem:SSOpp} and the conclusion of the previous sentence, we see that this proof shows that Hypothesis~$(O)$ implies that all points at mutual distance~$4$ of $\Oct$ are opposite in $\MS$. It then follows, again from Lemma~\ref{Lem:SSOpp}, that points of $\Oct$ at mutual distance~$3$ are special. In order to show Hypothesis~$(S)$, we only need to still show that points of $\Oct$ at mutual distance~$2$ are not collinear. Suppose, for a contradiction, that the points $x$ and \(y\) of \(\Oct\) are collinear in $\MS$ and $d(x,y)=2$. Let $z$ be a point of \(\Oct\) with $d(x,z)=4$ and $d(y,z)=2$. Then $x\equiv z$, but $x\perp y$ and either $y\perp z$ (in which case $x$ and $z$ cannot be opposite), or $y\pperp z$, contradicting Lemma~\ref{Lem:SSOpp}. 
\end{proof}

\begin{remark}
	We have shown that an embedding satisfies Hypothesis~\((S)\) if, and only if, the embedding is pseudo-isometric. We conjecture that it is also true that an embedding satisfies Hypothesis~\((O)\) (or the stronger statement that every pair of points of $\Oct$ at mutual distance~$4$ is opposite in $\MS$) if, and only if, the embedding is pseudo-isometric, but this is a significantly harder problem. 
\end{remark}

\paragraph{\bf Acknowledgement}
The authors would like to thank Geertrui Van de Voorde for her helpful suggestions and comments regarding this paper.

\paragraph{\bf Data Availability Statement}
No datasets were generated or analysed during this study.

\paragraph{\bf Conflicts of interest}
The authors have no conflicts of interest to declare.

\end{document}